\newcommand{\BA}{{\mathbb {A}}}
\newcommand{\BC}{{\mathbb {C}}}
\newcommand{\BD}{{\mathbb {D}}}
\newcommand{\BF}{{\mathbb {F}}}
\newcommand{\BG}{{\mathbb {G}}}
\newcommand{\BM}{{\mathbb {M}}}
\newcommand{\BN}{{\mathbb {N}}}
\newcommand{\BP}{{\mathbb {P}}}
\newcommand{\BQ}{{\mathbb {Q}}}
\newcommand{\BR}{{\mathbb {R}}}
\newcommand{\BS}{{\mathbb {S}}}
\newcommand{\BX}{{\mathbb {X}}}
\newcommand{\BZ}{{\mathbb {Z}}}
\newcommand{\CD}{{\mathcal {D}}}
\newcommand{\CE}{{\mathcal {E}}}
\newcommand{\CF}{{\mathcal {F}}}
\newcommand{\CG}{{\mathcal {G}}}
\newcommand{\CI}{{\mathcal {I}}}
\newcommand{\CL}{{\mathcal {L}}}
\newcommand{\CM}{{\mathcal {M}}}
\newcommand{\CO}{{\mathcal {O}}}
\newcommand{\CT}{{\mathcal {T}}}
\newcommand{\CV}{{\mathcal {V}}}
\newcommand{\End}{{\mathrm{End}}}
\newcommand{\Gal}{{\mathrm{Gal}}}
\newcommand{\GL}{{\mathrm{GL}}}
\newcommand{\Hom}{{\mathrm{Hom}}}
\newcommand{\Ker}{{\mathrm{Ker}}\,}
\newcommand{\Lie}{{\mathrm{Lie}}\,}
\newcommand{\Res}{{\mathrm{Res}}}
\newcommand{\SL}{{\mathrm{SL}}}
\newcommand{\Spec}{{\mathrm{Spec}}\,}
\newcommand{\Sp}{{\mathrm{Sp}}}
\newcommand{\lra}{\longrightarrow}
\def\dom{{\rm dom}}
\def\id{{\rm id}}
\def\rig{{\rm rig}}
\def\Res{{\rm Res}}
\def\Lie{{\rm Lie}}
\def\Hom{{\rm Hom}}
\def\Spf{{\rm Spf}}
\def\Spec{{\rm Spec}}
\def\cls{{\rm cls}}
\def\der{{\rm der}}
\def\tor{{\rm tor}}
\newtheorem{theorem}{Theorem}[section]
\newtheorem{proposition}[theorem]{Proposition}
\newtheorem{lemma}[theorem]{Lemma}
\newtheorem {conjecture}[theorem]{Conjecture}
\newtheorem{corollary}[theorem]{Corollary}
\newtheorem {question}[theorem]{Question}
\newtheorem {properties}[theorem]{Properties}
\theoremstyle{definition}
\newtheorem{definition}[theorem]{Definition}
\newtheorem{example}[theorem]{Example}
\newtheorem{remark}[theorem]{Remark}
\newtheorem{remarks}[theorem]{Remarks}
\numberwithin{equation}{section}
\newenvironment{altenumerate}
   {\begin{list}
      {\textup{(\theenumi)} }
      {\usecounter{enumi}
       \setlength{\labelwidth}{0pt}
       \setlength{\labelsep}{0pt}
       \setlength{\leftmargin}{0pt}
       \setlength{\itemsep}{\the\smallskipamount}
       \renewcommand{\theenumi}{\roman{enumi}}
      }}
   {\end{list}}
\newenvironment{altitemize}
   {\begin{list}
      {$\bullet~~$ }
      {\setlength{\labelwidth}{0pt}
       \setlength{\labelsep}{0pt}
       \setlength{\leftmargin}{0pt}
       \setlength{\itemsep}{\the\smallskipamount}
      }}
   {\end{list}}
\begin{document}

\begin{abstract}
This is a survey article that advertizes the idea that  there should exist a theory of $p$-adic local analogues of Shimura varieties. Prime examples are the towers of rigid-analytic spaces defined by Rapoport-Zink spaces, and we  also review their theory  in the light of this idea.   We also discuss conjectures on the $\ell$-adic cohomology of local Shimura varieties.
\end{abstract}

\thanks{The research of Rapoport was partially supported by SFB/TR 45 ``Periods, Moduli Spaces and 
Arithmetic of Algebraic Varieties" of the DFG. The research of Viehmann was partially supported by ERC starting grant 277889.}
\title{Towards a theory of local Shimura varieties}
\author{ Michael Rapoport  and Eva Viehmann}

\date{\today}
\maketitle
\centerline{\it To P. Schneider on his 60th birthday}
\tableofcontents
\section{Introduction} 

Deligne's definition of a Shimura variety \cite{Del-Bourb} is of ultimate elegance. Starting from a {\it Shimura datum}, i.e.,
a pair $(G,\{ h\})$, where $G$ is a reductive algebraic group over $\BQ$ and $\{ h\}$ is a $G(\BR)$-conjugacy class
 of homomorphism $h : \BS \lra G_{\BR}$ satisfying three simple axioms, Deligne defines a
tower of complex-analytic spaces $\{ \BM^K \mid K \subset G(\BA_f) \}$. Here $K$ runs through the compact
open subgroups of $G(\BA_f)$, and $G(\BA_f)$ acts on this tower as a group of Hecke correspondences.
The individual members of this tower are defined as
\begin{equation*}
 \BM^K = G (\BQ) \setminus [ X_{\{h\}} \times G(\BA_f)/K ] \, ,
\end{equation*}
where $X_{\{h\}}$ is the complex variety of elements $h$ of $\{h\}$. It is well-known that $\BM^K$ has a
unique structure of quasi-projective variety over $\BC$, compatible with all transition maps in this tower.
In fact, the tower is equipped with an effective descent from $\BC$ to the {\it Shimura field} $E=E(G,\{h\})$
and this descended model can be characterized as a {\it canonical model} in the sense of Deligne \cite{Del-Bourb}.

Fix a prime number $p$. The purpose of the present paper is to advertize the idea that a similar theory should exist over $\BQ_p$ 
instead of $\BQ$. Starting from a {\it local Shimura datum}, i.e., a triple $(G, [b], {\{\mu\}})$ consisting of
a reductive algebraic group $G$ over $\BQ_p$, a $\sigma$-conjugacy class $[b]$ of elements in $G(\breve\BQ_p)$,
and a conjugacy class ${\{\mu\}}$ of cocharacters of $G_{\bar{\BQ}_p}$, satisfying some simple axioms, we would
like to associate to it a {\it local Shimura variety}, i.e., a tower of rigid-analytic spaces $\{ \BM^{K} \mid K \subset G (\BQ_p) \}$ over $\breve E$.  Here $E$ denotes the {\it local Shimura field} $E$ associated to $(G, \{\mu\})$ and for any $p$-adic local field $F$ we denote by $\breve F$ the completion of the maximal unramified extension of $F$. The group $G(\BQ_p)$ acts on this tower via Hecke correspondences. Furthermore, and this is at variance with the classical theory of Shimura varieties, the $\sigma$-centralizer group $J(\BQ_p)$ acts on each member of the tower in a compatible manner. The members of this tower should all map to the period domain $\breve\CF(G, b, \{\mu\})$ associated to $(G, [b], {\{\mu\}})$, in the sense of \cite{DOR},   in a compatible way, and equivariantly for the action of all groups in play. Furthermore, there should be a {\it Weil descent datum} on this tower from $\breve E$ down to $E$. 

The prototype of such a tower is given by the tower 
defined by a Rapoport-Zink space. Recall that this tower arises in two steps: in a  first step, one constructs a formal scheme over $\Spf\, \CO_{\breve E}$; in a second step, one takes its generic fiber and constructs the tower over it. Here the formal scheme arises as the solution of a moduli problem of $p$-divisible groups, and the tower arises by considering level structures on the universal $p$-divisible group over the generic fiber. The thrust of the present paper is that it should be possible to construct the tower directly. Then the formal schemes constructed in \cite{RZ} would be considered as providing {\it integral models} of {\it certain} members of the tower, just as this happens in the case of  classical Shimura varieties. 

At present we are not able to define such a tower, except in cases closely
related to RZ-spaces. However, we strongly believe in their existence.  Besides aesthetic reasons and the analogy with the global theory of Shimura varieties, there are also systematic reasons to believe that such a
construction exists. And the prize to be won is simply too big not to search for such a construction. The $\ell$-adic cohomology of the tower admits a simultaneous action of $G(\BQ_p)$, of the Weil group $W_E$ and of $J(\BQ_p)$. The actions of these three groups are intertwined in a deep way, and often define local Langlands correspondences in a geometric way.  A theory of local Shimura varieties should provide enough flexibility to establish such geometric Langlands correspondences in many cases. In the text, we discuss in this context two cohomology conjectures:
1) the Kottwitz conjecture which concerns  the discrete part of the cohomology, when $[b]$ is a {\it basic} $\sigma$-conjugacy class in the sense of 
Kottwitz \cite{K2},
2) the Harris conjecture which gives an inductive formula for the cohomology when $[b]$ is not basic. 

The point of view of local Shimura varieties also opens a new perspective on the ``classical case'' of RZ-spaces, and this  raises a number of questions concerning them, which seem interesting in their own right. 
The second aim of this survey paper is thus, besides
advocating the idea of general local Shimura varieties, to formulate a number of questions and conjectures on RZ-spaces that arise in this way. It turns out that, by adopting a rigorously group-theoretic 
set-up, the theory of RZ-spaces is stripped of its origins as the solution of formal moduli problems, and 
 becomes more streamlined. 
 
 None of these problems are solved here. It is not clear  how much   the methods that have been 
developed over the last years to analyze the formal schemes of \cite{RZ} can contribute to solve them. Encouraging in this direction is the recent paper by W.~Kim \cite{Kim} in which he enlarges substantially the class of RZ-spaces. Also, the construction by Kisin \cite{Ki} of integral models of Shimura varieties of abelian type should yield  new spaces of RZ-type by global methods, via the uniformization theorem of \cite{RZ}. However, neither of these constructions is in the spirit of this paper. More in line with the ideas outlined here is  Scholze's theory of perfectoid spaces \cite{Scholze-perf, Scholze-surv}. Indeed, this is  a theory that takes place largely in the generic fiber, and our hope is that these novel methods (perhaps combined with the theory of Fargues/Fontaine of ``the curve'' in a relative setting) can advance the theory that we have in mind. In fact, this hope is already in the process of being put into reality: as a first step in this direction, Scholze and Weinstein \cite{Scholzewein} show in the case of a local Shimura datum $(G, [b], {\{\mu\}})$ with $G=\GL_n$ that the inverse limit 
in the corresponding RZ tower represents a moduli problem that does not involve $p$-divisible groups, and it seems quite likely that, using this description of the inverse limit, one can construct local Shimura varieties for local Shimura data of {\it local Hodge type}, i.e., those that admit an embedding into a local Shimura datum for $\GL_n$. 

Many of the questions raised here are already present, at least in a tentative form, in the ICM talk \cite{Rapo-ICM} of one of us (M. R.).  However, in the intervening time there has been enormous progress in this area. For instance, we have a much better understanding of the covering spaces of open subspaces of period domains, through the work of Faltings, Fargues, Hartl and Scholze. Also, the cohomology of RZ spaces has been much elucidated, through the work of Boyer, Dat, Fargues, Mantovan, Mieda, Strauch, Shin  and others. We also understand better the situation in those cases when the RZ data involve ramification. For instance, we have now a purely group-theoretical definition of  local models of Shimura varieties through the work of Pappas and Zhu \cite{PZ}. This result is related to the concrete construction of local models through moduli problems just as the putative theory of local Shimura varieties is related to the concrete constructions of them through RZ spaces.

Here is a section-by-section description of the contents of the paper. In section \ref{sigmaconjclas}, we recall a few facts from Kottwitz' s theory of $\sigma$-conjugacy classes,  and, in particular,  introduce, for a given geometric conjugacy class $\{\mu\}$ of cocharacters of $G$ the finite set $A(G, \{\mu\})$ of {\it acceptable } $\sigma$-conjugacy classes, and the subset $B(G, \{\mu\})\subseteq A(G, \{\mu\})$ of {\it neutral acceptable} $\sigma$-conjugacy classes. Section \ref{perdom} is a very brief reminder of the definition of {\it period domains}. In section \ref{rzspaces}, we discuss RZ {\it data} and their associated formal schemes; we also define the RZ {\it tower} and discuss various questions that arise in this context, when viewing this tower as a typical example of a local Shimura variety. In section \ref{locShvar} we explain the conjectural concept of a local Shimura variety, and give a few examples beyond those coming from RZ spaces. In section \ref{elladiccoho}, we define the $\ell$-adic cohomology modules defined by a local Shimura variety. In section \ref{seckottwitzconj}, we discuss the cohomology conjecture of Kottwitz, and in section \ref{harrisconj} the cohomology conjecture of Harris. 

 This paper was triggered by two concurrent events: the fact that one of us (E. V.) noted that the Harris conjecture had to be modified in a non-trivial way, and the ETH lectures 2011 of one of us (M. R.) which offered an opportunity to think through again some  aspects of the theory in question in the light of recent developments in this area. 
 
 The results and ideas of L.~Fargues and P.~Scholze were all-important in the recent development of the ideas presented here; in addition, we profited greatly from their generous advice and their comments. We also thank J.-F.~Dat, U.~G\"ortz, X.~He, T.~Kaletha, Y.~Mieda and X.~Shen for helpful remarks. We also thank the referee for his remarks. 
   
\smallskip

{\bf Notation.} Throughout the paper, a prime number $p$ is fixed. For  a $p$-adic local field $F$, we denote by $\breve F$ the completion of the maximal unramified extension $F^{{\rm un}}$ (in a fixed algebraic closure $\overline F$ of $F$), and by $\sigma\in \Gal(\breve F/F)$ the relative Frobenius automorphism. For a reductive algebraic group $G$, we denote by $G_{\rm der}$ its derived group, by $G_{\rm sc}$  the simply connected cover group of $G_{\rm der}$, and by $G_{\rm ab}$ the maximal torus quotient of $G$.

\section{$\sigma$-conjugacy classes }\label{sigmaconjclas}

In this section, we recall Kottwitz's theory of $\sigma$-conjugacy classes and single out those that are {\it acceptable} wrt a chosen conjugacy class of cocharacters.

\subsection{Definitions}Let $G$ be a connected reductive algebraic group over the finite extension $F$ of $\BQ_p$.  We denote by $B(G)=B(F, G)$ the set of equivalence classes of $G(\breve F)$ by the equivalence relation of being $\sigma$-conjugate, cf. \cite{Kottwitz1}, 
\begin{equation*}
b\sim b'\iff b'=g^{-1} b\sigma(g),\quad  g\in G(\breve F) . 
\end{equation*}
For $b\in G(\breve F)$ we denote $[b]=\{g^{-1}b\sigma(g)\mid g\in G(\breve F)\}$ its $\sigma$-conjugacy class.
 
Kottwitz, cf. \cite{Kottwitz1}, has given a classification of the set $B(G)$ (compare also \cite{RapoportRichartz}, Section 1). This classification  generalizes the Dieudonn\'e-Manin classification of isocrystals by their Newton polygons if $F=\BQ_p$. For us the following version of Kottwitz's results is convenient, cf. \cite{RapoportRichartz}.
  
A $\sigma$-conjugacy class is determined by two invariants. One of them is given by the image of $[b]$ under the {\it Kottwitz map} 
  \begin{equation}
  \kappa_G:B(G)\rightarrow \pi_1(G)_{\Gamma} .
  \end{equation}  Here $\pi_1(G)$ denotes Borovoi's algebraic fundamental group  of $G$, \cite{Borovoi}, and we are taking the coinvariants under the action of the absolute Galois group $\Gamma$ of $F$. The second invariant is obtained by assigning to $b$ a homomorphism $\tilde\nu_b:\mathbb{D}\rightarrow G_{\breve F}$ where $\mathbb{D}$ is the pro-algebraic torus whose character group is $\mathbb{Q}$.  The conjugacy class of this homomorphism is then $\sigma$-stable. Let $X_*(G)_\BQ/G$ be the set of conjugacy classes of homomorphisms $\BD_{\breve F}\to G_{\breve F}$. We denote by  
  \begin{equation}
  \nu_{[b]}\in\big(X_*(G)_\BQ/G\big)^{\langle\sigma\rangle}
  \end{equation}
   the conjugacy class of $\tilde\nu_b$. This yields the second classifying invariant  of $[b]$, and  is called the {\it Newton point} associated with $[b]$; the map $[b]\mapsto \nu_{[b]}$ is called the {\it Newton map}.  The images of $\nu_{[b]}$ and $\kappa_G([b])$ in $\pi_1(G)_{\Gamma}\otimes \mathbb{Q}$ coincide, cf. \cite{RapoportRichartz}, Thm. 1.15\footnote{It has been pointed out to us that the reference to \cite{Kottwitz1} in loc.~cit. may be insufficiently detailed. However, a proof of this statement follows the standard strategy of Kottwitz: for tori, the statement is \cite{Kottwitz1}, 4.4 and 2.8; the general case 
   follows from  the case of tori,  via functoriality and the identification $\pi_1(G)_{\Gamma}\otimes \mathbb{Q}=\pi_1(G_{\rm ab})_{\Gamma}\otimes \mathbb{Q}$. Compare also the proof of \cite{Kottwitz-glob}, Prop.~11.4.}. Both invariants are functorial in homomorphisms $G\to G'$.   
   
 \begin{example}
  There are two extreme cases worth mentioning. 
  
  The first is $G=\GL_n$. In this case $\pi_1(G)=\BZ=\pi_1(G)_\Gamma$, and $\kappa_G([b])$ is determined by $\nu_{[b]}$. In  this case, the description of $B(G)$ by  $\nu_{[b]}$ coincides with the classical description of isocrystals by their Newton polygons (generalized to the case of $F$ instead of $\BQ_p$). 
  
  The second case is when $G=T$ is a torus. In this case, $\pi_1(T)=X_*(T)$ and $\kappa_G([b])$ determines $\nu_{[b]}$, which is simply the image of $\kappa_T([b])$ under the succession of maps
 \begin{equation*}
 X_*(T)_\Gamma\to  X_*(T)_\Gamma\otimes\BQ\simeq (X_*(T)_\BQ)^\Gamma .
 \end{equation*}
\end{example}
  
 \begin{remark}\label{Bexplicit}
  Let us make these invariants more explicit. 
 
  \smallskip
 
  (i) The Newton map is made explicit in \cite{Kottwitz1}, 4.3. The Kottwitz map for tori is made explicit in \cite{Kottwitz1}, 2.5., and the case when $G_{\rm der}$ is simply connected is reduced by functoriality to this case via the natural isomorphism $\pi_1(G)\simeq X_*(G_{\rm ab})$ that holds in this case. 
 
 \smallskip
 
 (ii) For unramified groups $G$, i.e., when $G$ is quasisplit and split over an unramified extension of $F$,  there is the following explicit description of $\kappa_G$.  Choose  a Borel subgroup $B$ of $G$ and a maximal torus $T$ contained in $B$, both defined over $F$. Then 
 $\pi_1(G)$ can be identified with the quotient of $X_*(T)$ by the coroot lattice, and the action of $\Gamma$ is the obvious one.
Let $b\in G(\breve F)$ and let $\mu\in X_*(T)$ be such that $b\in K\mu(\pi)K$ for some hyperspecial subgroup $K$ of $G(\breve F)$ that fixes a vertex in the apartment for $T_{\breve F}$. Here $\pi$ denotes a uniformizer of $F$. The existence of such a $\mu$ is a consequence of the Cartan decomposition. Then $\kappa_G([b])$ is the image of $\mu$ under the canonical projection from $X_*(T)$ to $\pi_1(G)_{\Gamma}$.  
 \smallskip

 (iii) Another way of making the invariants explicit is to relate $B(G)$ to the Iwahori-Weyl group, cf. \cite{He, Goertz-HN}. Recall the definition of the Iwahori-Weyl group, cf. \cite{HR}. Let $S$ be a maximal split $\breve F$-torus that is defined over $F$. Let $T$ be its centralizer. Since $G$ is quasisplit over $\breve F$, $T$ is a maximal torus. Let $N$ be its normalizer. The {\it finite Weyl group} associated to $S$ is
 \begin{equation*}
 W=N(\breve F)/T(\breve F) .
 \end{equation*}
 The {\it Iwahori-Weyl group} associated to $S$ is 
 \begin{equation*}
 \tilde W=N(\breve F)/T(\breve F)_1 ,
 \end{equation*}
 where $T(\breve F)_1$ denotes the unique Iwahori subgroup of $T(\breve F)$. Both groups are equipped with an action of $\sigma$.  Then $\tilde W$ contains the affine Weyl group $W_a$ as a normal subgroup, with factor group naturally identified with $\pi_1(G)_I$. Here $I=I_F=\Gal (\bar F/F^{{\rm un}})$ denotes the inertia group. The choice of a $\sigma$-invariant alcove in the apartment for $S$ splits the resulting  extension, and hence $\tilde W\simeq W_a\rtimes\pi_1(G)_I$.  The length function on $W_a$ is extended in a natural way to $\tilde W$ by regarding the elements of $\pi_1(G)_I$ as length zero elements. By further projection, we obtain a natural homomorphism
 \begin{equation}
 \kappa_{\tilde W}: \tilde W\to \pi_1(G)_\Gamma .
 \end{equation}
The element $w$ is called {\it straight} if for the  length  we have  
 $$
 \ell(w\sigma(w)\cdot\ldots\cdot\sigma^{n-1}(w))=n\ell(w), \,\forall n.
 $$ We denote the set of straight elements by $\tilde W_{\rm st}$. Furthermore, $\tilde W$ contains the abelian group $X_*(T)_I$ as normal subgroup, with factor group naturally identified with $W$. For $w\in \tilde W$, there exists an integer $n$ such that $\sigma^n$ acts trivially on $\tilde W$ and such that 
 $$
\lambda:= w\sigma(w)\cdot\ldots\cdot\sigma^{n-1}(w)\in X_*(T)_I .
 $$ The element  $\tilde\nu_w=\frac{1}{n}\lambda\in X_*(T)_\BQ^I$ is independent of $n$, and the conjugacy class of the corresponding homomorphism $\BD\to G_{\breve F}$ is defined over $F$. We denote this conjugacy class by $\nu_w$.   
 
 The explicit description of the invariants is now given as follows, cf.~\cite{He}. The map $N(\breve F)\to B(G)$ induces a {\it surjective} map 
 \begin{equation}\label{straight}
 \omega: \tilde W_{\rm st}\lra B(G) ,
 \end{equation}
 such that the following diagrams are commutative, 
 
 \begin{equation*}
 \begin{xy}
 \xymatrix@C-1em{
\tilde{W}_{\rm st} \ar[rr]^{\omega}\ar[rd]_{\kappa} &         & B(G) \ar[dl]^{\kappa}&&\tilde{W}_{\rm st} \ar[rr]^{\omega} \ar[rd]_{\nu} &         & B(G) \ar[dl]^{\nu} \\
				&   \pi_1(G)_{\Gamma}   &           & &		                         &    (X_*(G)_{\BQ}/G)^{\Gamma}   &
}
\end{xy}
\end{equation*}

 Furthermore, two elements in $\tilde W_{\rm st}$ map under $\omega$ to the same element in $B(G)$ if and only if they are $\sigma$-conjugate in $\tilde W$. 
 
 \smallskip
 
 (iv) There is also a cohomological description of the Kottwitz map which was communicated to us by L.~Fargues. It uses the cohomology theory of crossed modules \cite{Lab}, \S 1 and is inspired by the interpretation by Labesse of other cohomological constructions of Kottwitz. We consider the crossed module
 \begin{equation}
 [G_{\rm sc}(\breve F)\to G(\breve F)]\, ,
 \end{equation}
 where $G(\breve F)$ is in degree $0$ and $G_{\rm sc}(\breve F)$ is in degree $-1$. Here the structure of a crossed module is given by the conjugation action of $G(\breve F)$ on $G_{\rm sc}(\breve F)$. We form the cohomology of this crossed module wrt the action of $\sigma^\BZ$,
 \begin{equation}
 B_{\rm ab}(G):=H^1(\sigma^\BZ, [G_{\rm sc}(\breve F)\to G(\breve F)]) .
 \end{equation}
 We claim that $B_{\rm ab}(G)$ can be identified with $\pi_1(G)_\Gamma$. Indeed, let $T$ be a maximal torus of $G$ and let $T_{\rm sc}$ be the corresponding maximal torus of $G_{\rm sc}$. The homomorphism of crossed modules
 $$
 [T_{\rm sc}(\breve F)\to T(\breve F)]\to  [G_{\rm sc}(\breve F)\to G(\breve F)]
 $$
 is a quasi-isomorphism, hence 
 $$
 H^1(\sigma^\BZ, [T_{\rm sc}(\breve F)\to T(\breve F)]) \tilde{\lra} B_{\rm ab}(G) . 
 $$
 On the other hand, the LHS can be identified with 
 \begin{equation*}
 \begin{aligned}
 H^1(\sigma^\BZ, [T_{\rm sc}(\breve F)\to T(\breve F)]) \tilde{\lra} H_0(\Gamma, &[X_*(T_{\rm sc})\to X_*(T)]) \tilde{\lra}\\ &\tilde{\lra}H_0(\Gamma, X_*(T)/X_*(T_{\rm sc}))\tilde{\lra}\pi_1(G)_\Gamma  \, . 
 \end{aligned}
 \end{equation*}
 Here the first map  generalizes  Kottwitz's isomorphism 
 $$B(T)=H^1(\sigma^\BZ, T(\breve F))\tilde {\lra} X_*(T)_\Gamma ,
 $$ and the second isomorphism comes from the quasi-isomorphism 
 $$[X_*(T_{\rm sc})\to X_*(T)]\cong [1\to X_*(T)/X_*(T_{\rm sc})].
 $$ 
 Now the homomorphism of crossed modules 
 $$[1\to G(\breve F)]\to [G_{\rm sc}(\breve F)\to G(\breve F)],
 $$ which is equivariant for the action of $\sigma^\BZ$,  induces a natural map,
 \begin{equation}
 \kappa:B(G)\to B_{\rm ab}(G)\, .
 \end{equation}
 After identifying as above the target group with $\pi_1(G)_\Gamma$, this map can be identified with the Kottwitz map. This identification is compatible with the identification of $H^1(F, G)$ with $H^1_{\rm ab}(F, G)=H^1(\Gamma, G_{\rm sc}(\bar F)\to G(\bar F))$, the abelianized cohomology group  in \cite{Lab}, Prop. 1.6.7., in the sense of the commutativity of the following diagram, 
\begin{equation*}
\begin{xy}
 \xymatrix{
H^1 (F, G) \ar[r]^{\sim} \ar[d]   &         H^1_{ab} (F, G) \ar[d]   \\
B (G) \ar[r]^\kappa                  &         B_{ab} (G) \ .  \\
}
\end{xy}
\end{equation*}
Here the vertical arrows are the natural ones (cf. \cite{K}, 1.8 for the left vertical arrow).

 \end{remark}
Associated with $b\in G(\breve F)$ is the functor $J_b$ assigning to an $F$-algebra $R$ the group
$$J_b(R)=\{g\in G(R\otimes_F\breve F)\mid g(b\sigma)=(b\sigma)g)\} .$$
It is represented by an algebraic group $J_b$ over $F$. If $b'=g_0b\sigma(g_0^{-1})$, then conjugation  by  $g_0$ induces an isomorphism $J_b\cong J_{b'}$.  

A class $[b]$ is called {\it basic}, if the conjugacy class $\nu_{[b]}$ is central. In this case $J_b$ is an inner form of $G$. Denoting by $B(G)_{\rm basic}$ the set of basic elements in $B(G)$, there is a natural bijection
\begin{equation}
B(G)_{\rm basic}\simeq \pi_1(G)_\Gamma ,
\end{equation}
cf.~\cite{Kottwitz1}, Prop.~5.6. For a basic element $b\in G(\breve F)$, the cohomology class of the inner form $J_b$ is given by the image of the class $[b]$ of $b$ under the map 
$$
B(G)_{\rm basic}\to B(G_{\rm ad})_{\rm basic}\simeq H^1(F, G_{\rm ad}) ,
$$ 
cf. \cite{Kottwitz1}, Prop.~5.6.  In terms of the map \eqref{straight}, there is a bijection between the image  in $B(G)$ of the set of elements of $\tilde W$ of length zero and $B(G)_{\rm basic}$ (the elements of length zero are all straight). 

\smallskip



To explain non-basic classes,  let us assume for simplicity that $G$ is quasisplit. Let $M$ be a Levi subgroup of $G$, i.e., a Levi component of a parabolic subgroup defined over $F$. A basic element $b\in M(\breve F)$ is called {\it $G$-regular} if the centralizer in $G$ of the homomorphism $\tilde \nu_b$  is equal to $M$, cf. \cite{Kottwitz1}, \S 6. Then any $[b]\in B(G)$ is the class of a $G$-regular basic element in a Levi subgroup, and this in an essentially unique way, cf. \cite{Kottwitz1}, Prop. 6.3. 

More explicitly, this can be described as follows. Let $B$ be a Borel subgroup and $T$ a maximal torus contained in $B$, both defined over $F$. 
 Let  $X_*(T)_{\dom}$, resp. $\big(X_*(T)_\BQ\big)_{\dom}$ be the set of cocharacters of $T$, resp. elements of $X_*(T)_\BQ$, which are dominant with respect to $B$. Then $\Gamma$ acts on these sets. Each conjugacy class of homomorphism $\tilde\nu:\mathbb{D}\rightarrow G_{\breve F}$ has a unique dominant representative in $ \big(X_*(T)_\BQ\big)_{\dom}$, which is $\Gamma$-invariant if the conjugacy class is $\Gamma$-invariant. For a given $[b]\in B(G)$, we often use the same notation $\nu_{[b]}$   for this representative of the conjugacy class $\nu_{[b]}$ defined by $[b]$. For $[b]\in B(G)$ we denote by $M_{[b]}$ the centralizer of $\nu_{[b]}$, a standard Levi subgroup.   The class $[b]$ then has a representative $b$ in $M(\breve F)$, and yields a well-defined class in $B(M)_{\rm basic}$. 
The group $J_b$,  for $b\in [b]$, is an inner form of  $M_{[b]}$, cf. \cite{Kottwitz1}, Remark 6.5.

\subsection{Acceptable elements}
We continue to assume that $G$ is quasisplit. For  a dominant element $\mu\in X_*(T)$,  let 
\begin{equation}
\overline{\mu}=[\Gamma:\Gamma_{\mu}]^{-1}\sum_{\tau\in\Gamma/\Gamma_{\mu}}\tau(\mu)\in(X_*(T)_{\mathbb{Q}})_{\dom}^{\Gamma}, 
\end{equation}
 where we denote by $\Gamma_{\mu}$ the stabilizer of $\mu$ in $\Gamma$. 
 On $X_*(T)_{\mathbb{Q}}$ we consider the order $\leq$ given by $\nu\leq\nu'$ if and only if $\nu'-\nu$ is a non-negative $\BQ$-linear combination of positive {\it relative} coroots, cf.~\cite{DOR}, ch. VI, \S 6. 
\begin{definition}\label{defadm} Let $\{\mu\}$ be a conjugacy class of cocharacters over $\bar F$ of $G$. An element $[b]\in B(G)$ is called {\it acceptable} for $\{\mu\}$ if   $\nu_{[b]}\leq \overline\mu_{\dom}$, where  
 $\mu_{\dom}$ denotes the unique element of $\{\mu\}$  contained in $X_*(T)_{\dom}$. An acceptable element 
 $[b]$ is called {\it neutral} if  $\kappa_G([b])=\mu^\natural$, where $\mu^\natural$ denotes the image of any $\mu\in \{\mu\}$ in 
$\pi_1(G)_{\Gamma}$. 

Let $A(G,\{\mu\})$ be the set of acceptable elements for $\{\mu\}$, and $B(G,\{\mu\})$  the subset of $A(G,\{\mu\})$ of neutral acceptable elements. 
\end{definition}
The sets $A(G,\{\mu\})$ and $B(G,\{\mu\})$ only depend on the image of $\mu_{\dom}$ in $X_*(T)_\Gamma$, cf. \cite{K2}, 6.3. 
\begin{remark} We will use these sets also when $G$ is no longer quasisplit. For  the definition of $B(G, \{\mu\})$ in the general case we refer to  \cite{K2}, 6.2, and the   set $A(G, \{\mu\})$ is defined analogously. The set $B(G, \{\mu\})$ appears in the description of points modulo $p$ of Shimura varieties, \cite{Rapo-guide}, \S 9, and the set $A(G,\{\mu\})$ appears in the context of period spaces, compare Proposition \ref{propperne} below. For the group $G=GL_n$ both sets coincide, and in this case the inequality defining them is nothing but a group-theoretical reformulation of the Mazur inequality for $F$-crystals, cf. \cite{RapoportRichartz, K}. 
\end{remark}

\begin{lemma} 
The sets $A(G, \{\mu\})$ and $B(G, \{\mu\})$ are  finite and non-empty. 
\end{lemma}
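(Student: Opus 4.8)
The plan is to prove finiteness and non-emptiness of $A(G,\{\mu\})$ (which contains $B(G,\{\mu\})$, so non-emptiness of the subset will require an extra argument) by reducing to the behavior of the two classifying invariants $\kappa_G$ and $\nu_{[b]}$. Recall from the discussion above that a class $[b]\in B(G)$ is determined by the pair $(\kappa_G([b]),\nu_{[b]})$, and these satisfy the compatibility that their images in $\pi_1(G)_\Gamma\otimes\BQ$ agree. For finiteness of $A(G,\{\mu\})$: the defining condition $\nu_{[b]}\le\overline{\mu}_{\dom}$ confines the Newton point to the bounded region $\{\nu\in(X_*(T)_\BQ)^\Gamma_{\dom} : 0\preceq\overline{\mu}_{\dom}-\nu\}$ intersected with the requirement that $\nu$ be a Newton point, i.e.\ lie in a lattice scaled by a bounded denominator; first I would observe that only finitely many conjugacy classes $\nu$ of homomorphisms $\BD\to G_{\breve F}$ can occur as $\nu_{[b]}$ with $\nu_{[b]}\le\overline{\mu}_{\dom}$, because the set of $\nu\in X_*(T)_\BQ$ with $0\preceq\overline{\mu}_{\dom}-\nu$ that are Newton points of elements of $B(G)$ is finite (the coweight lattice is discrete and the cone condition cuts out a compact polytope; the denominators are bounded by, say, the order of the Weyl group). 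Then for each fixed $\nu$, the fiber of the Newton map over $\nu$ is finite: by Kottwitz, $[b]$ with Newton point $\nu$ corresponds to a $G$-regular basic class in the Levi $M_\nu=$ centralizer of $\nu$, and basic classes in $M_\nu$ form a torsor under $\pi_1(M_\nu)_\Gamma$, but only those whose image in $\pi_1(G)_\Gamma\otimes\BQ$ equals the image of $\nu$ are allowed — and the torsion subgroup of $\pi_1(M_\nu)_\Gamma$ (the kernel of $\pi_1(M_\nu)_\Gamma\to\pi_1(M_\nu)_\Gamma\otimes\BQ$) is finite since $\pi_1(M_\nu)$ is a finitely generated abelian group. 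Combining, $A(G,\{\mu\})$ is finite, hence so is $B(G,\{\mu\})\subseteq A(G,\{\mu\})$.

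For non-emptiness: the key point is that $B(G,\{\mu\})$ is non-empty, since this immediately gives $A(G,\{\mu\})\supseteq B(G,\{\mu\})\ne\emptyset$. I would exhibit an explicit neutral acceptable class. The natural candidate is the \emph{basic} class $[b_0]$ with $\kappa_G([b_0])=\mu^\natural\in\pi_1(G)_\Gamma$, which exists by the bijection $B(G)_{\rm basic}\simeq\pi_1(G)_\Gamma$ recalled above. Its Newton point $\nu_{[b_0]}$ is central, and its image in $\pi_1(G)_\Gamma\otimes\BQ$ equals the image of $\mu^\natural$, which is the image of $\overline{\mu}_{\dom}$; so $\nu_{[b_0]}$ is precisely the central element $\overline{\mu}^{\rm av}$ obtained by averaging $\overline{\mu}_{\dom}$ over the Weyl group (projecting to the center). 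One then needs $\nu_{[b_0]}\le\overline{\mu}_{\dom}$, i.e.\ $\overline{\mu}_{\dom}-\nu_{[b_0]}$ is a non-negative rational combination of positive relative coroots. This is a standard fact: for any dominant $\lambda\in X_*(T)_\BQ$, the difference between $\lambda$ and its Weyl-average (the projection to the center, which is the "barycenter") is a non-negative combination of positive coroots — it is exactly the statement that the dominant representative dominates the average of its Weyl orbit in the $\preceq$ order. By construction $\kappa_G([b_0])=\mu^\natural$, so $[b_0]$ is neutral, hence $[b_0]\in B(G,\{\mu\})$.

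The main obstacle I anticipate is the bookkeeping in the non-quasisplit case: as the Remark preceding the Lemma notes, the definitions of $A(G,\{\mu\})$ and $B(G,\{\mu\})$ for general $G$ are taken from \cite{K2}, 6.2, not from Definition \ref{defadm}, so one must either pass to an inner form that is quasisplit (using that $B(G)$ and the invariants are insensitive to inner twisting, and that $\{\mu\}$ transports appropriately) or invoke the relevant compatibilities from \cite{K2} directly. A secondary technical point is making precise the "bounded denominators" claim for Newton points $\nu\le\overline{\mu}_{\dom}$; the cleanest route is to note that any such $\nu$ lies in the convex hull of the Weyl orbit of $\mu_{\dom}$ (this is exactly the characterization of $\le$ in \cite{DOR}, ch.~VI) translated into the statement that $n\nu\in X_*(T)$ for $n$ bounded by the exponent of the relevant finite groups, so the lattice-points-in-a-polytope argument applies verbatim. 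Everything else — the finiteness of fibers of the Newton map and the explicit construction of $[b_0]$ — is routine once these two points are in place.
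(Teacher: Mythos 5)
Your non-emptiness argument is precisely the paper's: the paper cites \cite{K2}, 6.4 for the fact that $B(G,\{\mu\})$ contains the unique basic class $[b_0]$ with $\kappa_G([b_0])=\mu^\natural$, and you have supplied the underlying verification (the inequality $\nu_{[b_0]}\le\overline{\mu}_{\dom}$ follows because the projection of a dominant element to the center lies below it in the coroot order, while $\kappa_G([b_0])=\mu^\natural$ is built in). For finiteness, the paper simply invokes \cite{RapoportRichartz}, Prop.~2.4~(iii), whereas you give a direct two-step argument: finitely many Newton points $\nu$ lie below $\overline{\mu}_{\dom}$ in the dominant cone (bounded polytope plus bounded denominators), and for each such $\nu$ the fiber of the Newton map is finite because it is controlled by the torsion subgroup of $\pi_1(M_\nu)_\Gamma$. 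This is essentially the content of that cited proposition, so you are not taking a different route so much as unpacking the reference. It is worth noting that the paper also records, in a subsequent Remark, a genuinely different finiteness argument communicated by X.~He, via the interpretation of $B(G)$ through straight elements of the Iwahori--Weyl group (length bounded by the translation length of $\mu$, image in $\pi_1(G)_\Gamma\otimes\BQ$ fixed, hence finitely many elements). Your approach buys a self-contained proof in the language of the two classifying invariants; the Iwahori--Weyl argument is slicker once that machinery is available. One small point: the denominator bound you assert for Newton points is indeed a standard consequence of Kottwitz's classification (e.g.\ via a faithful representation into some $\GL_n$), but ``order of the Weyl group'' is not obviously the right constant; this does not affect the argument since any finite bound suffices, but you would want to cite or prove the boundedness rather than guess the constant.
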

\begin{proof} Non-emptiness follows because $B(G,\{\mu\})$ contains the unique basic element of $B(G)$ with $\kappa_G([b])=\mu^\natural$, cf. \cite{K2}, 6.4. Finiteness of $A(G,\mu)$ is a special case of \cite{RapoportRichartz}, Prop. 2.4 (iii). In particular, the subset $B(G, \{\mu\})$ is also finite. 
\end{proof} 
\begin{remark} Here is another argument to prove the finiteness of $A(G, \{\mu\})$, based on the interpretation of $B(G)$ in terms of  the Iwahori-Weyl group, cf. (iii) of Remark  \ref{Bexplicit} (we learned this argument from X.~He). If $w\in \tilde W_{\rm st}$ represents an element in $A(G, \{\mu\})$, then the length of $w$ is bounded by the length of the translation element to the image of $\mu$ in $X_*(T)_I$, whereas its image in $\pi_1(G)_\Gamma\otimes \BQ$ is fixed. But there are only finitely many such elements. 

\end{remark}
We have an identification
$H^1(F,G)=(\pi_1(G)_{\Gamma})_{{\rm tors}}$, cf. \cite{Kottwitz1}, Remark~5.7.
Hence, since the images of $\kappa_G([b])$ and $\mu^\natural$ in $\pi_1(G)_\Gamma\otimes\BQ$ are identical,   we obtain  a map 
 \begin{equation}
 c:A(G, \{\mu\})\rightarrow H^1(F,G)\quad ([b],\mu)\mapsto c([b], \{\mu\}):=\kappa_G([b])-\mu^\natural .
 \end{equation}
  By definition, the inverse image of $0$ under this map coincides with the set $B(G,\mu)$.  For $[b]\in A(G, \{\mu\})$, we denote by $G_{[b]}$ the inner form of $G$ corresponding to the image in $G_{\rm ad}$ of any cocycle in the class $c([b],\mu)$. 
 
Recall that a pair $(b, \mu)$ consisting of an element $b\in G(\breve F)$ and a cocharacter $\mu$ of $G$  defined over a finite extension $K$ of $\breve F$ is called a {\it weakly  admissible pair} in $G$ if for any $F$-rational representation $V$ of $G$, the filtered $\sigma$-$F$-space $\CI(V)=(V\otimes_F\breve F, b\sigma, Fil^\bullet_\mu)$ is weakly admissible, cf.~\cite {DOR}, p.~219. Here weak admissibility of a filtered $\sigma$-$F$-space is the obvious generalization of Fontaine's original definition, e.g., \cite{DOR}, Def.~8.1.6.  It is enough to check this property on one faithful representation of $G$, cf. loc. cit. Also, this notion only depends on the {\it par-equivalence} class of $\mu$, i.e., only on the filtration on the category ${\rm Rep}_F(G)$ of 
finite-dimensional $F$-rational representations of $G$ induced by $\mu$, cf. \cite{DOR}, Cor.~9.2.26. There is an equivalence of categories induced by Fontaine's functor
\begin{equation*}
\begin{aligned}
\omega: &\{\text{weakly admissible  filtered $\sigma$-$F$-spaces}\}\to \\ &\quad\quad\quad\quad\quad\quad\quad\quad\quad\{ \text{crystalline \emph{special} $F$-rep's of $\Gal(\bar{\breve {F}}/K)$}\} . 
\end{aligned}
\end{equation*}

Here  an $F$-representation is a continuous representation of $\Gal(\bar{\breve {F}}/K)$ in a finite-dimensional $F$-vector space; it is called special if it satisfies the {\it special} condition $(*)$ of \cite{FR}, Introd. (If the $F$-representation comes from a $p$-divisible formal group $X$, the condition $(*)$ is equivalent to the condition that $X$ is a formal $\CO_F$-module in the sense of Drinfeld, cf. \cite{Dr}, \S 1.) Such a representation is called crystalline if its underlying $p$-adic representation is crystalline.

For a weakly admissible pair, consider the following two fiber functors on ${\rm Rep}_F(G)$. The first is the standard fiber functor  $Ver$. The other functor, denoted by  $\tilde{\mathcal{F}}$, maps $V\in {\rm Rep}_F(G)$  to the underlying $F$-vector  space of the crystalline special 
$F$-representation of $\Gal(\bar{\breve {F}}/K)$ associated  by the  functor $\omega$ to the weakly admissible filtered $F$-isocrystal $\CI(V)$. Then $\Hom (Ver,\tilde{\mathcal{F}})$ is a right $G$-torsor, hence  defines an element $\cls(b,\mu)\in H^1( F,G).$ 
\begin{proposition}\label{Fontaininequ}
Let $(b, \mu)$ be a weakly admissible pair in $G$. Then the $\sigma$-conjugacy class $[b]$ of $b$ is acceptable for $ \{\mu\}$.  There is an equality of classes in $H^1(F,G)$,
$$
\cls(b,\mu)=c([b], \{\mu\}).
$$
\end{proposition}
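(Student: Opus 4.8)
For the acceptability statement the plan is to reduce to $\GL_n$. Fix a faithful $F$-rational representation $(\rho,V)$ of $G$. By hypothesis $\CI(V)=(V\otimes_F\breve F,\,b\sigma,\,\Fil^\bullet_\mu)$ is weakly admissible, hence so is every filtered isocrystal obtained from it by the usual tensor constructions. The fundamental (Mazur--Fontaine) inequality for weakly admissible filtered isocrystals over $\breve F$, applied to $\GL(V)$ (cf.~\cite{DOR}), then says that for each such construction the Newton polygon of $\rho(b)$ lies on or above the corresponding Hodge polygon attached to $\rho\circ\mu$, with equal endpoints. Letting the construction run over the irreducible highest-weight pieces turns this system of polygon inequalities into the group-theoretic inequality $\nu_{[b]}\le\overline{\mu}_{\dom}$ in the dominance order on $X_*(T)_\BQ$, via the standard dictionary between ``Newton-over-Hodge'' and the coweight order (cf.~\cite{RapoportRichartz}); so $[b]\in A(G,\{\mu\})$. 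Alternatively one may quote the group-theoretic form of this inequality directly from \cite{DOR}.

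For the cohomological identity, observe that $\cls(b,\mu)$ and $c([b],\{\mu\})=\kappa_G([b])-\mu^\natural$ both lie in $H^1(F,G)\cong(\pi_1(G)_\Gamma)_{\mathrm{tors}}$ (Kottwitz, as recalled above), and that both are functorial in $G$ --- for $\cls$ because weak admissibility and Fontaine's functor $\omega$ are compatible with pushing a representation along a homomorphism $G\to G'$. By the commutative square at the end of Remark~\ref{Bexplicit} it therefore suffices to compute the image of $\cls(b,\mu)$ under $H^1(F,G)\to B(G)\xrightarrow{\kappa_G}\pi_1(G)_\Gamma$ and to show it equals $\kappa_G([b])-\mu^\natural$. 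Since $H^1(\breve F,G)$ vanishes ($G$ connected reductive), the $G$-torsor $\Isom^\otimes(Ver,\tilde{\mathcal F})$ is trivial over $\breve F$; a choice of tensor isomorphism $\alpha\colon Ver_{\breve F}\xrightarrow{\sim}\tilde{\mathcal F}_{\breve F}$ then represents the class by the cocycle $g:=\alpha^{-1}\circ\sigma(\alpha)\in G(\breve F)$, whose image in $B(G)$ is $[g]$. Thus everything comes down to the identity $\kappa_G([g])=\kappa_G([b])-\mu^\natural$.

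To prove this last identity I would build $\alpha$ in two steps, using Fontaine's comparison isomorphism, which identifies $\tilde{\mathcal F}(V)\otimes_F B_{\mathrm{cris}}$ (in the relevant $F$-linear variant of $B_{\mathrm{cris}}$) with $\CI(V)\otimes_{\breve F}B_{\mathrm{cris}}$ compatibly with Frobenius and filtration: a first factor that trivializes the underlying isocrystals $(V\otimes_F\breve F,\,b\sigma)$ over $\breve F$, whose $\sigma$-discrepancy is $b$ and hence contributes $\kappa_G([b])$; and a second factor carried by the period element $t$ and the Hodge filtration $\Fil^\bullet_\mu$, whose $\sigma$-discrepancy should contribute exactly $-\mu^\natural$, the image of $\mu$ in $\pi_1(G)_\Gamma$. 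Assembling the two gives $\kappa_G([g])=\kappa_G([b])-\mu^\natural=c([b],\{\mu\})$. The main obstacle is pinning down the second, ``Hodge'', contribution with the correct sign --- i.e.\ disentangling the Frobenius-- and Galois--descent through the period ring and matching it against the explicit description of $\kappa_G$ on the Borel/torus level (Remark~\ref{Bexplicit}(ii)). One way to make the bookkeeping tractable is to reduce, by functoriality, to the case of a torus $T$, where weak admissibility forces $\nu_b=\overline{\mu}$ and the crystalline realization $\omega(\CI(\cdot))$ is built from explicit Lubin--Tate characters, so that the torsor can be computed directly; but such a reduction first requires replacing $G$ by the Levi $M_{[b]}$ and $b$ by a $G$-regular basic representative (cf.~\cite{Kottwitz1},~\S 6), and one must check that weak admissibility --- and the two classes in question --- behave well under this replacement, which is the delicate point. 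Much of the needed machinery is in \cite{DOR}.
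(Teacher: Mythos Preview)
The paper does not actually prove this proposition: it simply cites \cite{RZ}, Prop.~1.20 for the case where $G_{\der}$ is simply connected, and Wintenberger \cite{W} for the general case. So there is no ``paper's approach'' to compare against beyond these references.

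Your argument for acceptability is the standard one and is fine.

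For the cohomological identity, your outline heads in a reasonable direction but is genuinely incomplete, and the route you sketch is not the one taken in the cited sources. Two specific issues. First, your ``two-step'' decomposition of $\alpha$ is vague: the comparison isomorphism lives over $B_{\mathrm{cris}}$, not over $\breve F$, and it is not clear that the ``isocrystal factor'' and the ``Hodge factor'' separate cleanly at the $\breve F$-level; in particular, saying the first factor ``trivializes the underlying isocrystals'' with $\sigma$-discrepancy $b$ is imprecise, since $(V\otimes\breve F,b\sigma)$ is not in general $\sigma$-isomorphic to the trivial isocrystal. Second, your proposed reduction via the Levi $M_{[b]}$ requires checking that weak admissibility and both classes behave well under this replacement; you flag this yourself as ``the delicate point'' and do not resolve it.

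The argument in \cite{RZ} avoids both difficulties. When $G_{\der}$ is simply connected one has $\pi_1(G)\cong X_*(G_{\ab})$, and Kottwitz's description makes $H^1(F,G)\to H^1(F,G_{\ab})$ injective. Since both $\cls(b,\mu)$ and $c([b],\{\mu\})$ are functorial along $G\to G_{\ab}$, one reduces \emph{directly} to the torus $G_{\ab}$, with no Levi passage and no need to decompose $\alpha$. For a torus the identity is a concrete computation. Wintenberger \cite{W} removes the hypothesis on $G_{\der}$ by a Tannakian argument. If you want to turn your sketch into a proof, the cleanest path is to follow this functoriality-to-$G_{\ab}$ strategy rather than the Levi reduction.
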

\begin{proof} See \cite{RZ}, Prop. 1.20 in the case that $G_{\der}$ is simply connected. The general case is due to  Wintenberger  \cite{W}.
\end{proof}

\section{Period domains}\label{perdom}

Let $G$ be a reductive algebraic group over $F$. Period spaces arise when fixing an element $b\in G(\breve F)$ and a conjugacy class $\{\mu\}$ of cocharacters of $G$ over $\bar F$. Let $E=E_{\{\mu\}}$ be the field of definition of $\{\mu\}$, the {\it reflex field} of the PD-pair $(G, \{\mu\})$, cf. \cite {DOR}, Definition 6.1.2. Then $E$ is a finite extension of $F$ contained in the fixed algebraic closure $\bar F$ of $F$. 

There is a smooth projective variety $\CF=\CF(G, \{\mu\})$ over $E$, whose points over $\bar F$ correspond to the par-equivalence classes of elements in 
$\{\mu\}$, cf. \cite {DOR}, Theorem 6.1.4.  The variety $\CF$ is homogeneous under $G_E$, and is a generalized flag variety for $G_E$.  

Let $\breve E=\breve F.E$, where the composite is taken inside an algebraic closure of $\breve F$ containing $\bar F$. Then $\breve E$ is the completion of the maximal unramified extension of $E$ in $\bar F$. Let 
\begin{equation}
{\breve {\mathcal F}(G, \{\mu\})}^{\rig}=\breve {\mathcal F}^{\rig}=\big(\mathcal{F}\times_{\Spec\, E}\Spec\, \breve E\big)^\rig
\end{equation}
denote the 
rigid-analytic space associated with the projective variety  $\breve{\mathcal F}=\mathcal{F}\times_{\Spec\, E}\Spec\, \breve E$ over the discretely valued field $\breve E$. The set of  elements $\mu\in{\breve\CF}(\bar F)$ such that $(b, \mu)$ is a weakly admissible pair for $G$ forms an admissible open subset of ${\breve\CF}^\rig$, called the {\it non-archimedean period domain} associated with $(G,b,\{\mu\})$ and denoted by $\breve{\mathcal{F}}(G,b,\{\mu\})^{{\rm wa}}$, cf. \cite{RZ}, 1.35. The period domain is stable under the action of $J_b(F)$ on $\breve{\mathcal F}^\rig$. 

In the sequel, we will sometimes consider $\breve{\mathcal F}^\rig$ as an adic space. We also have use for the version of  $\mathcal{F}(G,b,\{\mu\})^{{\rm wa}}$ in the category of Berkovich spaces, cf. \cite {DOR}, Def.~9.5.4 (see \cite{Fargues}, App. D and  \cite{Scholze-perf}, \S 2 for  succint explanations of the relation between rigid-analytic, adic and Berkovich spaces).

The period domain $\breve{\mathcal{F}}(G, b,\{\mu\})^{{\rm wa}}$  depends up to isomorphism only on the $\sigma$-conjugacy class $[b]$ of $b$. Indeed, if $b'=gb\sigma(g)^{-1}$, then the translation map $\mu\mapsto \mu'={\rm Ad}(g)(\mu)$ induces such an isomorphism $\breve{\mathcal{F}}(G, b,\{\mu\})^{{\rm wa}}\tilde{\lra}\breve{\mathcal{F}}(G, b',\{\mu\})^{{\rm wa}}$. 

If $\mathcal{F}(G,b,\{\mu\})^{{\rm wa}}$ is non-empty, then $[b]\in A(G, \{\mu\})$, cf. Proposition \ref{Fontaininequ}. The converse is also true, cf. \cite{DOR}, Thm.~9.5.10: 
\begin{proposition}\label{propperne} 
Let $b\in G(\breve F)$, and let $\{\mu\}$ be a conjugacy class of cocharacters of $G_{\bar F}$. Then $[b]\in A(G, \{\mu\})$ if and only if $\breve{\mathcal{F}}(G, b,\{\mu\})^{{\rm wa}}$ is non-empty. \qed
\end{proposition}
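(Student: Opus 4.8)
The plan is to prove the two implications separately. One direction is already in hand: if $\breve{\mathcal{F}}(G,b,\{\mu\})^{\wa}$ is non-empty, choosing a weakly admissible filtration $\mu'$ in it exhibits a weakly admissible pair $(b,\mu')$, and Proposition \ref{Fontaininequ} then yields $[b]\in A(G,\{\mu\})$. So the whole content is the reverse implication, and I would organize it in three steps: a reduction to the semisimple adjoint case, a reformulation of weak admissibility as semistability, and a numerical argument showing that a sufficiently general filtration is semistable precisely because $[b]$ is acceptable.

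\emph{Step 1 (reduction).} Since $\breve F$ has cohomological dimension $\leq 1$, Steinberg's theorem (Serre's Conjecture I) gives $H^1(\breve F,G_{\ad})=0$, so all inner forms of $G$ become isomorphic over $\breve F$; as $b$, the flag variety $\breve{\mathcal{F}}(G,\{\mu\})$ and the notion of weak admissibility all live over $\breve F$ (after a harmless enlargement of the base from $\breve E$), we may assume $G$ quasisplit. Moreover the parabolic $P_\mu$ contains the centre of $G$, so $\breve{\mathcal{F}}(G,\{\mu\})$ and $\breve{\mathcal{F}}(G_{\ad},\{\mu_{\ad}\})$ are identified, and using the criterion that weak admissibility of a pair may be tested on a single faithful representation one checks that the weakly admissible locus for $G$ inside this common flag variety is cut out of the one for $G_{\ad}$ by a condition on the torus quotient $G_{\rm ab}$ (equality of Newton and Hodge slopes for every character) which holds automatically once $[b]\in A(G,\{\mu\})$, because then $\overline\mu_{\dom}-\nu_{[b]}$ is a non-negative combination of positive coroots and hence has trivial image in $X_*(G_{\rm ab})_\BQ$. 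Thus we are reduced to: $G$ semisimple adjoint and $[b]\in A(G,\{\mu\})$ imply $\breve{\mathcal{F}}(G,b,\{\mu\})^{\wa}\neq\emptyset$.

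\emph{Step 2 (semistability).} By Totaro's theorem, weak admissibility of a filtered isocrystal coincides with GIT-semistability, and in the $G$-equivariant formulation this says: a $G$-filtration $\mathcal{F}$ of type $\{\mu\}$ at $b$ is weakly admissible if and only if for every $F$-rational parabolic $P\subseteq G$ and every reduction of $(b,\mathcal{F})$ to $P$ the associated ``Hodge minus Newton'' degree is $\leq 0$ (the total degree being $0$ by Step 1). There are only finitely many $G(F)$-conjugacy classes of parabolics, so it suffices to produce a single $\bar F$-point of the unirational, hence point-rich, rigid space $\breve{\mathcal{F}}(G,\{\mu\})^{\rig}$ at which all of these finitely many inequalities hold --- for instance a point in sufficiently general position with respect to each $P$; one then in fact obtains that $\breve{\mathcal{F}}(G,b,\{\mu\})^{\wa}$ is open and dense, which is stronger than the asserted non-emptiness.

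\emph{Step 3 (the numerical core, and the main obstacle).} For $\mathcal{F}$ in general position with respect to a fixed $P$, the $P$-destabilizing degree is computed by pairing the relevant $P$-dominant characters against $\nu_{[b]}$ on the one hand and against the generic relative position of $\mu$ with $P$ on the other; the resulting inequality is precisely a group-theoretic repackaging of the Mazur inequality, and it follows from $\nu_{[b]}\leq\overline\mu_{\dom}$ by the standard properties of the dominance order together with the fact that the generic relative position maximizes the Hodge term. The prototype is $G=\GL_n$, where this amounts to the existence of a weakly admissible filtration of prescribed Hodge polygon lying on or above a given Newton polygon (Fontaine--Laffaille, Fontaine--Rapoport). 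The genuinely hard part is carrying this out uniformly over all conjugacy classes of parabolics at once --- that is, verifying that acceptability forces \emph{every} parabolic-destabilization inequality to fail at a generic filtration --- and this combinatorial heart of the argument is what is worked out in \cite[Ch.~IX]{DOR}.
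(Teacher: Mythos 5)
The paper gives no proof of its own for this proposition: it simply cites \cite[Thm.~9.5.10]{DOR}. Your overall plan matches what is carried out there: the ``only if'' direction comes from Proposition~\ref{Fontaininequ}, and the ``if'' direction from Totaro's semistability reinterpretation of weak admissibility together with a generic-position computation showing that acceptability forces every parabolic-destabilization inequality to fail at a sufficiently general filtration. You correctly observe that this gives density, not just non-emptiness, and you explicitly defer the combinatorial heart to \cite[Ch.~IX]{DOR}, which is fair given that the paper itself only points there.

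Your Step~1 is flawed, however. The assertion that ``the notion of weak admissibility lives over $\breve F$'' is false. Weak admissibility is a condition on the filtered $\sigma$-$F$-space $(V\otimes_F\breve F,\, b\sigma,\, {\rm Fil}^\bullet_\mu)$, and the subobjects against which one must test are the $b\sigma$-stable sub-isocrystals; these encode the $\sigma$-descent to $F$ and are emphatically not $\breve F$-linear data. Steinberg's theorem ($H^1(\breve F, G_{\rm ad})=0$) only tells you that all inner $F$-forms of $G$ become isomorphic as groups over $\breve F$; it does not allow you to replace $G$ by its quasisplit inner form while preserving the weak-admissibility condition, because that condition sees the Frobenius. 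Indeed, one purpose of the cohomology class $c([b],\{\mu\})\in H^1(F,G)$ appearing in Proposition~\ref{Fontaininequ} is precisely to record the difference between inner forms, which would be meaningless if they could be conflated in this way. As written, the reduction does not hold up; you would either need to argue the quasisplit reduction by different means (for example by comparing the sets of $F$-rational parabolics of $G$ and of $G^*$ directly), or simply read the proposition as carrying the standing quasisplit assumption of Definition~\ref{defadm}, with the general case handled by the extension of that definition noted in the remark following it.
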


The best-known example of a period domain is the Drinfeld half space $\Omega^n_F$, attached to the triple $G=\GL_n$, $ [b]=[1]$, $\{\mu\}=\big(n-1, (-1)^{(n-1)}\big)$. It is the complement in $\BP^{n-1}_F$ of all $F$-rational hyperplanes. The same rigid-analytic variety arises also from other triples $(G, [b], \{\mu\})$. We refer to \cite{DOR}, Chapters VIII, IX,  for an exposition of the theory of $p$-adic period domains, with many more examples.

\section{RZ spaces}\label{rzspaces}

In this section we recall the formal moduli spaces of $p$-divisible groups within a fixed isogeny class constructed in \cite{RZ}. These have become known under the name of {\it Rapoport-Zink spaces}, cf., e.g., \cite{Fargues}. In this paper, we abbreviate this to {\it RZ spaces}.

\subsection{RZ data}\label{RZdata}

Let us review the data on which these moduli spaces depend. We adapt the original definition \cite{RZ}, Definition 3.18,  to the case we are interested in. We distinguish two classes, the EL case and the PEL case.

\smallskip

\noindent{\bf EL case.} A {\it simple rational} RZ {\it datum} in the EL case is a tuple $\mathcal{D}$ of the form $\CD=(F,B,V, \{\mu\},[b])$ where
\begin{altitemize}
\item $F$ is a finite field extension of $\mathbb{Q}_p$
\item $B$ is a central division algebra over $F$
\item $V$ is a finite-dimensional $B$-module.
\item $\{\mu\}$ is a conjugacy class of {\it minuscule} cocharacters $\mu:\mathbb{G}_{m,\overline{\mathbb{Q}}_p}\rightarrow G_{\overline {\mathbb Q}_p}$ 
\item $[b]\in A(G,\{\mu\})$. 
\end{altitemize}
Here 
$$G=\GL_B(V), $$
  viewed as an algebraic group over $\mathbb{Q}_p$. 
    
  We impose the following condition. For $\mu\in\{\mu\}$ consider  the  decomposition of $V\otimes\bar\BQ_p$ into weight spaces. We then require  that only weights $0$ and $1$ occur in the decomposition, i.e. $V\otimes\bar\BQ_p = V_0 \oplus V_1$. 
  
A {\it simple integral} RZ {\it datum} $\CD_{\BZ_p}$ in the EL case consists, in addition to the data $\CD$,  of a maximal order $\CO_B$ in $B$ and an $\CO_B$-stable lattice $\Lambda$ in $V$. This induces an integral model $\CG$ of $G$ over $\BZ_p$, namely $\CG=\GL_{\CO_B}(\Lambda)$, considered as a group scheme over $\BZ_p$. 

\begin{remark}
 In \cite{RZ} more general (not necessarily simple) integral RZ data are considered. (Instead of a single lattice $\Lambda$,  {\it periodic lattice chains} are considered.)   In the sequel we will sometimes refer to those as well. The group scheme $\CG$ has connected special fiber. The subgroup $\CG(\BZ_p)$ of $G(\BQ_p)$ is a parahoric subgroup, and in fact a maximal parahoric subgroup if $B=F$. To obtain in the case $B=F$ more general parahoric subgroups (e.g., Iwahori subgroups), one has to consider more general periodic lattice chains. 
\end{remark}

\noindent{\bf PEL case.} This case will only be considered when $p\neq 2$. A {\it simple rational} RZ {\it datum} in the PEL case is a tuple $\mathcal{D}=(F,B,V, (\, , \, ), *,  \{\mu\},[b])$ where
  
\begin{altitemize}
\item $F$, $B$, and $V$ are as in the EL case
\item $(\, ,\, )$ is a non-degenerate alternating $\mathbb{Q}_p$-bilinear form on $V$
\item $*$ is an involution on $B$ satisfying $$(xv,w)=(v,x^*w) ,\,\,  \forall v,w\in V, \, \forall x\in B .$$
\item $\{\mu\}$ is a conjugacy class of minuscule cocharacters $\mu:\mathbb{G}_{m,\overline{\mathbb{Q}}_p}\rightarrow G_{\overline {\mathbb Q}_p}$,  where $G$ is the algebraic group over $\BQ_p$ defined by
\begin{multline*}
G(R)=\{g\in \GL_{B\otimes_{\mathbb{Q}_p}R}(V\otimes_{\mathbb{Q}_p}R)\mid \exists c(g)\in R^{\times}:\\ (gv_1,gv_2)=c(g)(v_1,v_2),\,  \forall v_1,v_2\in V\otimes_{\mathbb{Q}_p}R \}.
\end{multline*}
\item $[b]\in A(G,\{\mu\})$
\end{altitemize}
We impose the following properties. First, we require, as in the EL case,  that, for any $\mu\in\{\mu\}$,  the only weights of $\mu$ occurring in $V\otimes\bar\BQ_p$ are $0$ and $1$. Secondly, we require that for any $\mu\in\{\mu\}$ the composition
\begin{equation}\label{eqcmu}
\BG_{m, \bar \BQ_p}  \buildrel{\mu}\over\longrightarrow G_{\bar \BQ_p} \buildrel{c}\over\longrightarrow \BG_{m, \bar \BQ_p}
\end{equation}
 is the identity, the latter morphism denoting the multiplier $c: G\to \BG_m$.
 
 A {\it simple integral}  RZ {\it datum} $\CD_{\BZ_p}$ in the PEL case consists, in addition to the data $\CD$,  of a maximal order $\CO_B$ in $B$ which is stable under the involution $*$, and an $\CO_B$-stable lattice $\Lambda$ in $V$ such that $\varpi\Lambda\subset \Lambda^\vee\subset \Lambda$. Here $\varpi$ denotes a uniformizer in $\CO_B$, and $\Lambda^\vee$ denotes the lattice of elements in $V$ which pair integrally with $\Lambda$ under the symplectic form $(\, , \, )$. This induces an integral model $\CG$ of $G$ over $\BZ_p$, with $\CG(\BZ_p)=G(\BQ_p)\cap \GL_{\CO_B}(\Lambda)$. 
\begin{remark}
Again, in \cite{RZ} more general integral data are considered (these correspond to {\it selfdual periodic lattice chains}). The subgroup $\CG(\BZ_p)$ is not always a parahoric subgroup of $G(\BQ_p)$; however, this holds if $B=F$ and the involution $*$ is trivial, or if $B=F$ and the extension $F/F_0$ is an {\it unramified} quadratic extension. Here $F_0$ denotes the fix field of $*$ in $F$. Furthermore, in these cases, the parahoric subgroup in question is maximal. 

On the other hand, consider the case when $B=F$ and $F/F_0$ is a ramified quadratic extension, and when $V$ is even-dimensional. Then the stabilizer of $\Lambda$ is {\it never} a parahoric subgroup unless $\Lambda^\vee=\varpi\Lambda$, cf. \cite{PRLM3}, 1.b.3, b). More precisely, in these cases the group $\CG(\BZ_p)$ is not parahoric, but contains a unique parahoric subgroup with index $2$, cf. \cite{PRLM3}, loc.~cit. The reason for this phenomenon is that the group scheme $\CG$ has in this case a special fiber that has two connected components, whereas parahoric group schemes are by definition connected. 
\end{remark}
\begin{remark}
 In the PEL case the group scheme $G$ above is not always connected. When it is not connected, $G$ does not fit into the framework of section \ref{sigmaconjclas}. In these cases (which are all related to  orthogonal groups), one has to use ad hoc definitions for $A(G, \{\mu\})$ etc., cf. \cite{smithling}. In the sequel we will neglect this difficulty, especially since the putative theory of local Shimura varieties should take as its point of departure  purely group-theoretic data.   However, it seems reasonable to expect that suitable modifications of the theory outlined here should transfer to the cases related to  orthogonal groups as well. 
\end{remark}
\begin{remark}\label{compwithRZ} The conditions on RZ data in \cite{RZ} seem to differ from those imposed here in one point. In \cite{RZ}, instead of the condition that $[b]\in A(G, \{\mu\})$ that appears here,  one asks  for the existence of a weakly admissible pair $(b, \mu)$ with $b\in [b]$ and $\mu\in \{\mu\}$. It follows from Proposition \ref{propperne} that the two conditions are in fact equivalent. These conditions  are satisfied   when the rigid space associated to the formal scheme in subsection \ref{secrz} is non-empty. The converse is discussed in Question \ref{questnonempty} and in Conjecture \ref{conjnonempty} below. 
\end{remark}
In both the EL case and the PEL case, we denote by  $E=E_{\{\mu\}}$  the associated reflex field, i.e., the  field of definition of $\{\mu\}$. Also, for fixed $b\in [b]$, we will denote by $J$  the algebraic group $J_b$ of section \ref{sigmaconjclas}. 
We write $\mathcal{O}_E$, resp.~$\CO=\mathcal{O}_{\breve E}$, for the ring of integers of $E$,  resp.~of $\breve E$.

 \subsection{The formal schemes} \label{secrz}
 
  We explain the formal schemes over $\Spf\, \CO$ associated to integral RZ data $\CD_{\BZ_p}$ of EL type or PEL type. 
  
  Let $\mathcal Nilp_{\mathcal O}$ denote the category of $\mathcal O$-schemes $S$ on which $p$ is locally nilpotent. For $S\in\mathcal Nilp_{\mathcal O}$ we denote by $\overline{S}$ the closed subscheme defined by $p\mathcal{O}_S$.
  
  Let us first consider the EL case. In what follows we will consider  pairs $(X, \iota)$, where $X$ is a $p$-divisible group over $S\in \mathcal Nilp_{\mathcal O}$ and $\iota:\CO_B\to\End\, X$ is an action of $\CO_B$ on $X$. We require the {\it Kottwitz condition} associated to $\{\mu\}$, i.e., the equality of characteristic polynomials, 
  \begin{equation}\label{kottwitzbed}
  {\rm char}\,(\iota(b);\Lie\, X)={\rm char}\,(b; V_0) , \quad \forall b\in\CO_B,
  \end{equation}
  where $V_0$ is determined by the weight decomposition of $V\otimes\bar\BQ_p$ associated to any $\mu\in\{\mu\}$. The isomorphism class of the $B$-module $V_0$ is defined over $E$ and independent of the choice of $\mu$. Since $\CO_B$ preserves $\Lambda$,  the RHS of \eqref{kottwitzbed} is a polynomial with coefficients in $\CO_E$; it  is compared with the polynomial with coefficients in $\CO_S$ on the LHS via the structure morphism. For a comparison with the original version of this condition \cite{K}  (which is more abstract), see \cite{Hartwig}, Prop.~2.1.3. 
  
Let $\BF$ be the residue field of $\CO$, and fix a pair $(\BX, \iota_\BX)$ as above (a {\it framing object}). We demand that the rational Dieudonn\'e module of $\BX$ with its action by $B$ and its  Frobenius endomorphism is isomorphic to $(V\otimes_{\BQ_p}\breve\BQ_p, b\sigma)$, where $b\in [b]$ is a fixed element. Then we consider the set-valued functor $\CM_{\CD_{\BZ_p}}$ on $\mathcal Nilp_\CO$ which associates to $S\in\mathcal Nilp_\CO$ the set of isomorphism classes of triples $(X, \iota, \rho)$, where $(X, \iota)$ is as above, and where
\begin{equation*}
\rho: X\times_S\bar S\to \BX\times_{\Spec\, \BF}\bar S
\end{equation*}
is an $\CO_B$-linear quasi-isogeny. (Note that in \cite{RZ} the quasi-isogeny is in the other direction; this entails a few changes of a trivial nature.)

Now let us consider the PEL case. In what follows we will consider triples $(X, \iota, \lambda)$ over schemes $S\in\mathcal Nilp_{\mathcal O}$, where $(X, \iota)$ is as in the EL case, and where $\lambda$ is a polarization of $X$ whose associated Rosati involution induces on $\CO_B$ the given involution $*$, and such that $\Ker\lambda\subset X[\iota(\varpi)]$ has order $|\Lambda/\Lambda^\vee|$. We fix a framing object $(\BX, \iota_\BX, \lambda_\BX)$ such that its rational Dieudonn\'e module with its action by $B$ and its polarization form and its Frobenius endomorphism is isomorphic to $(V\otimes_{\BQ_p}\breve\BQ_p, (\, ,\, ), b\sigma)$  (the isomorphism preserving the forms up to a factor in $\breve\BQ_p^\times$). Then we consider the set-valued functor $\CM_{\CD_{\BZ_p}}$ on $\mathcal Nilp_\CO$ which associates to $S\in\mathcal Nilp_\CO$ the set of isomorphism classes of triples $(X, \iota, \lambda, \rho)$, where $(X, \iota, \lambda)$ is as above, and where
\begin{equation*}
\rho: X\times_S\bar S\to \BX\times_{\Spec\, \BF}\bar S
\end{equation*}
is an $\CO_B$-linear quasi-isogeny that preserves the polarizations up to a factor in $\BQ_p^\times$, locally on $\bar S$. 
\begin{theorem}
Let $\CD_{\BZ_p}$ be integral RZ data of type EL or PEL. The functor $\CM_{\CD_{\BZ_p}}$ on $\mathcal Nilp_{\mathcal O}$ is representable by a formal scheme,  locally formally of finite type and separated over $\Spf\, \CO$.
\end{theorem}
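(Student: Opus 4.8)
The plan is to follow the strategy of \cite{RZ}, \S 2--3, adapting the proof there to the case of a single lattice $\Lambda$ (the general periodic-lattice-chain case only complicates the bookkeeping, not the substance). The key point is that representability is a \emph{relative} statement: one first shows that $\CM_{\CD_{\BZ_p}}$ is relatively representable by a closed formal subscheme of an ambient formal scheme built from $p$-divisible groups rigidified by a quasi-isogeny to $\BX$, and then one checks that this ambient space is representable by a formal scheme locally formally of finite type. First I would recall that the functor that assigns to $S \in \mathcal{N}ilp_\CO$ the set of pairs $(X,\rho)$, with $X$ a $p$-divisible group over $S$ and $\rho : X \times_S \bar S \to \BX \times_{\Spec\,\BF} \bar S$ a quasi-isogeny, is representable by a formal scheme $\widehat{\CM}$ which is locally formally of finite type over $\Spf\,\CO$; this is Rapoport--Zink's basic existence theorem, whose proof goes back to the deformation theory of $p$-divisible groups (Messing, Grothendieck--Illusie) together with Drinfeld's rigidity of quasi-isogenies. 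The height of $\rho$ is locally constant on $S$, and $\widehat{\CM}$ decomposes as a disjoint union $\coprod_{i\in\BZ}\widehat{\CM}^{(i)}$ according to this height; each $\widehat{\CM}^{(i)}$ is, up to the choice of a lattice, a formal neighbourhood of a point in the Grassmannian-type moduli of the universal deformation, hence formally of finite type.

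Next I would impose the $\CO_B$-action. Adding the datum $\iota : \CO_B \to \End\,X$ compatible with $\iota_\BX$ under $\rho$ is a closed condition: it amounts to requiring that finitely many morphisms between $p$-divisible groups over $S$ (the images of a set of algebra generators of $\CO_B$) lift those fixed on $\BX$, and the locus where a given homomorphism between $p$-divisible groups exists and lifts a prescribed one is closed, by the theory of the crystal and rigidity of lifts of quasi-isogenies. The Kottwitz condition \eqref{kottwitzbed} is the equality of two characteristic polynomials in $\CO_S[T]$, one coming from the action on $\Lie\,X$ (a finite locally free $\CO_S$-module) and one being the pullback of a fixed polynomial in $\CO_E[T]$; equality of the coefficients is again a closed condition. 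Hence the EL moduli functor is relatively representable by a closed formal subscheme of $\widehat{\CM}$, and therefore itself representable by a formal scheme, which inherits being separated and locally formally of finite type from $\widehat{\CM}$ (closed immersions preserve both, and separatedness of $\widehat{\CM}$ over $\Spf\,\CO$ follows from rigidity of quasi-isogenies, which makes the diagonal a closed immersion).

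For the PEL case one proceeds the same way, first building the formal scheme of quadruples $(X,\iota,\lambda,\rho)$ where now $\lambda$ is a quasi-polarization: the existence and compatibility of $\lambda$ with $\lambda_\BX$ under $\rho$ is again closed (a quasi-isogeny $X \to X^\vee$ lifting the fixed one), the condition that the Rosati involution induce $*$ on $\CO_B$ is closed, and the condition on $\Ker\lambda$ being contained in $X[\iota(\varpi)]$ with prescribed order $|\Lambda/\Lambda^\vee|$ is a locally closed (in fact closed, once the ambient height is fixed) condition cutting out the polarized locus; here one uses $p \neq 2$ so that alternating forms behave well. Assembling, $\CM_{\CD_{\BZ_p}}$ is again a closed formal subscheme of a formal scheme locally formally of finite type and separated over $\Spf\,\CO$, hence has the asserted properties.

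The main obstacle, and the part of the argument that genuinely requires care, is the unconditional representability of the \emph{rigidified deformation} functor $\widehat{\CM}$ by a formal scheme that is \emph{locally formally of finite type} --- i.e. that étale-locally it looks like $\Spf$ of a quotient of $\CO[[t_1,\dots,t_r]]\langle u_1,\dots,u_s\rangle$ by an ideal, with each connected component topologically of finite type. This is not formal: one must control the deformation space along the Newton stratification, show that the height-fixing decomposition gives pieces that are adic formally of finite type (using, e.g., that quasi-isogenies of bounded height between $p$-divisible groups over a fixed base form a bounded family, a consequence of the classification of isocrystals recalled in \S\ref{sigmaconjclas}), and invoke Grothendieck's existence theorem to pass from the formal to the algebraizable situation where Artin's representability criteria apply. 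Everything downstream of this --- imposing the $\CO_B$-action, the Kottwitz condition, the polarization --- is a routine succession of closed immersions and so preserves the two finiteness and separatedness properties.
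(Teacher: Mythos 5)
The paper's own ``proof'' is a citation to \cite{RZ}, Cor.~3.40 (and to \cite{Fargues}, Lemma~2.3.23 for separatedness), and your overall architecture — first establish representability of the ambient rigidified deformation functor $\widehat{\CM}$ of pairs $(X,\rho)$ by a formal scheme locally formally of finite type, then cut out $\CM_{\CD_{\BZ_p}}$ as a closed formal subscheme by imposing the $\CO_B$-action, the Kottwitz condition, and in the PEL case the quasi-polarization — is exactly the architecture of \cite{RZ}, Ch.~2--3. The relative step (a succession of closed conditions, each preserving separatedness and the finiteness property) is treated accurately.

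Your account of the key absolute step, the representability of $\widehat{\CM}$ itself ([RZ], Thm.~2.16), is not accurate at two points. First, the height-constant pieces $\widehat{\CM}^{(i)}$ are \emph{not} formally of finite type over $\Spf\,\CO$; each can still have infinitely many irreducible components. The quasi-compactness is obtained only from the finer exhaustion by open subfunctors $\widehat{\CM}_n$ on which $p^n\rho$ and $p^n\rho^{-1}$ are both isogenies (equivalently, $\ker(p^n\rho)\subset X[p^{2n}]$), and it is these $\widehat{\CM}_n$ that are realized as closed formal subschemes of formal schemes of finite type; $\widehat{\CM}$ is then the increasing union, hence only \emph{locally} formally of finite type. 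Second, \cite{RZ} do not pass through Grothendieck's existence theorem to an algebraizable situation nor invoke Artin's representability criteria. The argument is a direct construction: pro-representability of the deformation functor of $\BX$ via Grothendieck--Messing theory, Drinfeld's rigidity of quasi-isogenies, and the observation that for fixed $n$ the datum of a finite flat subgroup scheme $\ker(p^n\rho)$ of prescribed order inside $X[p^{2n}]$ is parametrized by a projective (Grassmannian/Quot-type) scheme over the universal deformation. You correctly identify this as the step requiring care, but the route you propose for it is not the one in \cite{RZ} and, as sketched, the ``pieces of the height decomposition are formally of finite type'' claim is simply false, so the argument would not close.
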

\begin{proof}
See \cite{RZ}, Cor.~3.40.  For the separatedness assertion, see \cite{Fargues}, Lemma 2.3.23.
\end{proof}
We note that the group $J_b(\BQ_p)$ acts on $\CM_{\CD_{\BZ_p}}$ by post composing with $\rho$,
\begin{equation}
g: (X, \iota, \lambda)\mapsto (X, \iota, g\circ \rho) , \quad g\in J_b(\BQ_p) . 
\end{equation} 
This action is continuous, cf. \cite{Fargues}, Prop.~2.3.11 and Rem.~2.3.12. 

Note that $\CM_{\CD_{\BZ_p}}$ only depends on the chosen $p$-divisible group $\mathbb{X}$ up to isogeny ($\CO_B$-linear isogeny in the EL case, resp. $\CO_B$-linear isogeny that preserves the polarization up to a scale factor in $\BQ_p^\times$ in the PEL case). Up to isomorphism $\CM_{\CD_{\BZ_p}}$ is independent of the choice of $b\in [b]$. 

\begin{remark} \label{remonflatdef} It is not clear whether a framing object always exists, but this seems quite possible. Related to this question is what condition one wants to impose on the action of $\CO_B$ on $\Lie\, X$. We have imposed only the Kottwitz condition but, especially in more ramified cases, one might want to impose other conditions, like the wedge condition or the spin condition, cf. \cite{PRLM3}. Then, of course, it becomes more difficult to find a framing object satisfying these conditions as well, cf. Example \ref{exempty} below. Generally speaking, one would like to impose additional conditions such that one obtains a closed formal subscheme of  $\CM_{\CD_{\BZ_p}}$ which is flat over $\CO$ and with underlying reduced scheme equal to the {\it affine Deligne-Lusztig variety} associated to $(G, b, \{\mu\}, \CG)$, cf.~\cite{RapoportRichartz}. The question of when an affine Deligne-Lusztig variety is non-empty has been much studied but is still open in the most general case, cf.~\cite{Gashi,GHKR,Goertz-HN,He,RapoportRichartz}.

In any case, having fixed the element $b\in [b]$, simple rational RZ data define the rational Dieudonn\'e module of the searched for framing object. By Dieudonn\'e theory, the existence of a framing object then comes down to finding a Dieudonn\'e lattice inside this rational Dieudonn\'e module with the required properties (stability under the action of $\CO_B$, almost self-duality, action on the tangent space, etc.). 
\end{remark}
\begin{remark}\label{natdef}
From the point of view of the formulation of the moduli problem, starting with the RZ datum $\CD_{\BZ_p}$ is not very natural. Indeed, from this point of view it is more natural to fix the framing object $\BX$ (with its additional structures) and to use it to formulate a moduli problem $\tilde\CM$ as above. Then any integral RZ datum $\CD_{\BZ_p}=(F, B, V, (\, , \,), *, \{\mu\}, [b], \CO_B, \Lambda)$ with an identification of the Dieudonn\'e module of $\BX$ respecting all additional structures and carrying the Frobenius endomorphism into $b\sigma$,
$$
M(\BX)\simeq\Lambda\otimes_{\BZ_p} W(\BF) ,
$$ 
defines an isomorphism of formal $\CO$-schemes $\CM_{\CD_{\BZ_p}}\simeq \tilde\CM$. 

Such an integral RZ datum $\CD_{\BZ_p}$ exists. Suppose a second choice $\CD'_{\BZ_p}$ is possible, and let $G$, resp. $G'$, be the two algebraic groups over $\BQ_p$ associated to the simple rational RZ data $\CD$, resp. $\CD'$. Then the two choices $\CD$ and $\CD'$ are related by a cohomology class $\eta\in H^1(\BQ_p, G)$. Then $G'$ is an inner form of $G$ (corresponding to the image of $\eta$ in $ H^1(\BQ_p, G_{\rm ad})$) and the two groups $\pi_1(G')_\Gamma$ and $\pi_1(G)_\Gamma$ are canonically identified. Under this identification, there is the identity
\begin{equation}\label{twistcomp}
c([b], \{\mu\})=c([b'], \{\mu'\})-\eta ,
\end{equation}
cf.~\cite{RZ}, Lemma 1.27. Here $\eta$ is considered as an element in $(\pi_1(G)_{\Gamma})_{\rm tors}$. 

This seems to point to the possibility of using {\it augmented group schemes} over the Tannaka category of isocrystals, in the sense of \cite{DOR}, Def.~9.1.15, as basic input in the definition of RZ data. However, we decided to stay with the present set-up since it seems more elementary. 
\end{remark}
\begin{remark}  Let ${\CD_{\BZ_p}}$ be an integral RZ datum, and $\CM=\CM_{\CD_{\BZ_p}}$ the associated formal scheme. Let $\CM_{\rm red}$ denote the underlying reduced scheme of $\CM$. This is a scheme locally of finite type over $\BF$, such that all its irreducible components are projective varieties over $\BF$, cf. \cite{RZ}, Prop.~2.32. It is an interesting problem to give an explicit description of this scheme. This has been done in a number of  cases, especially in relation to the uniformization theorem \cite{RZ}, ch.~6,  which shows that this problem is closely related to the problem of giving an explicit description of the {\it basic locus} in the reduction modulo $p$ of Shimura varieties. Let us discuss some examples;  for a more complete discussion and references compare the introduction of \cite{V}. All examples will correspond to basic elements in $B(G, \{\mu\})$. 

Let us consider the PEL case where $F=B=\BQ_p$, in which case $G={\rm GSp}(V, (\, ,\,))$, where $V$ is a symplectic vector space of dimension $2g$. We assume that $\Lambda$ is a selfdual lattice in $V$.   For $g=1$, 
$\CM_{\rm red}$ is a discrete sets of points. The case $g=2$ has been described by Koblitz \cite{Kob}, Katsura and Oort \cite{KO1}, Kaiser \cite{Ka} and Kudla and Rapoport \cite{KR2}. In this case the set of irreducible components of  $\CM_{\rm red}$ and their intersection behaviour is closely related to the Bruhat-Tits building of the associated group $J$. The irreducible components are all  projective lines. For $g=3$ partial results are obtained by Katsura and Oort
\cite{KO2}, Li and Oort \cite{LO} and Richartz \cite{Ri}. In this case again, the set of irreducible components can be described in terms of the Bruhat-Tits building of $J$. However, the individual irreducible components have a complicated geometric structure.

Another class of PEL examples arises when $F=B$ and where $*$ is non-trivial. In this case $G={\rm GU}(V,(\, ,\,))$ is the group of unitary similitudes, where $(V,(\, ,\,))$ is a hermitian vector space over $F$. Again, we take $\Lambda$ to be self-dual. We assume that the weight spaces for $\mu$  are of dimension $1$ and $n-1$, where $n=\dim V\geq 2$. In this case, one can again describe the set of irreducible components in terms of the Bruhat-Tits building of $J$; the individual irreducible components can be identified with compactified Deligne-Lusztig varieties (Vollaard and Wedhorn \cite{VollaardWedhorn} for the case when $F/F_0$ is unramified; Rapoport, Terstiege and Wilson \cite{RTW} for the case when $F/F_0$ is ramified---note that in the latter case, which is ramified, additional conditions are imposed to define a flat formal scheme over $\CO$, cf. Remark \ref{remonflatdef}). Very recently, the case where $n=4$ and where the weight spaces for $\mu$ are both of dimension $2$, and where the extension $F/F_0$ is unramified has been cleared up by Howard and Pappas \cite{HP}. 

Other examples that have been studied are of Hilbert-Blumenthal type (Bachmat and Goren \cite{BG},  Goren  \cite{Gn},  Yu \cite{Yu}, Goren and Oort \cite{GO}, Stamm \cite{St}, Kudla and Rapoport \cite{KR1}).

G\"ortz and He \cite{GH} study a closely related question in the equal characteristic case. Instead of the reduced locus of  formal schemes of the form $(\CM_{\CD_{\BZ_p}})_{\rm red}$, they  consider certain affine Deligne-Lusztig varieties  which are defined as subschemes of affine Grassmannians, cf.~\cite{GHKR}. In \cite{GH} the question is studied of when the set of irreducible components  can be described in terms of the Bruhat-Tits building of $J(F)$, and when the individual irreducible components are isomorphic to  Deligne-Lusztig varieties. As the examples related to symplectic groups of rank $\geq 3$ show, this is possible only in a limited number of cases, and G\"ortz and He give an essentially complete list of them. \end{remark}

\begin{remark}\label{locmodrem} The local structure of the formal scheme $\CM=\CM_{\CD_{\BZ_p}}$ is given by the associated local model (in fact, the {\it naive } local model since we only impose the Kottwitz condition on the Lie algebra). The naive local model is a projective scheme $M^{\rm naive}$ over $\Spec\, \CO_E$ such that, for every geometric point in $\CM_{\rm red}$, there exists a geometric point in the special fiber of $M^{\rm naive}$ such that the complete local rings of $\CM$ and of $M^{\rm naive}$ at these points are isomorphic. In fact, there is a {\it local model diagram} connecting $\CM$ and $M^{\rm naive}$, comp. \cite{PRS}, \S 1.2.  Naive local models have been mostly considered in the case when $\CG$ is a parahoric group scheme, and then there is a whole theory devoted to them. We refer to \cite{PRS} for a survey of this theory.  
The local model in its non-naive version (flat closure of the generic fiber $M^{\rm naive}\otimes_{\CO_E} E$ of $M^{\rm naive}$ inside $M^{\rm naive}$) {\it conjecturally} only depends on the triple $\big (G, \{\mu\}, \CG\big)$ (when $\CG$ is parahoric). This theory is used to prove in the EL case, when 
$F$ is an unramified extension of $\BQ_p$, that $\CM_{\CD_{\BZ_p}}$ is flat over $\CO$ (this even holds in this case for arbitrary periodic lattice chains (G\"ortz \cite{Goertz1}), cf. \cite{PRS}, Thm.~2.3. Something similar holds in the PEL case when $B=F$ is an unramified extension of $\BQ_p$ \cite{Goertz2}, cf. \cite{PRS}, Thm.~2.9. On the other hand, when ramification occurs, then flatness may fail for  $\CM_{\CD_{\BZ_p}}$, contrary to what was conjectured in \cite{RZ}, p.~95, or in \cite{Rapo-ICM}, Remarks~1.3, (ii). 

In the context of the definition of the formal schemes 
$\CM_{\CD_{\BZ_p}}$, it is natural to consider also cases when $\CG$ is not parahoric, and this apparently also has applications 
to the study of {\it Exotic Arithmetic Fundamental Lemmas}, cf.~\cite{RapoZh}.

\end{remark}
The following question  is fundamental for all that follows. We refer to Conjecture \ref{conjnonempty} for a conjectural answer when $\CG$ is parahoric. 
\begin{question}\label{questnonempty} When is the rigid-analytic space over $\breve E$ associated to the formal scheme $\CM_{\CD_{\BZ_p}}$  non-empty?
\end{question}
\begin{remark}\label{unramifiedRZ}
 Here is what the theory of naive local models has to say about this question. We call a simple integral RZ datum {\it unramified} if the following conditions are satisfied, cf. \cite{Fargues}. In the EL case we demand that $B=F$ is an unramified field extension of $\BQ_p$. In the PEL case we demand in addition that the lattice $\Lambda$ is selfdual, i.e., $\Lambda^\vee=\Lambda$. In the case of unramified simple integral RZ data, it is easy to construct a framing object, and $\CM_{\CD_{\BZ_p}}$ is formally smooth over $\CO$. Hence Question~\ref{questnonempty} has a positive answer in this case, cf., e.g., \cite{Fargues}. 

More generally, if the naive local model in Remark \ref{locmodrem} is topologically flat over $\CO_E$, then the answer to Question \ref{questnonempty} is positive. This applies to all EL cases when $F/\BQ_p$ is unramified.  Also, all PEL cases fall into this class for which $B=F$ and where $*$ is trivial, or where $F/F_0$ is an unramified quadratic extension. Here again $F_0$ denotes the fix field of $*$ in $F$. 
\end{remark}
\begin{example}\label{exempty}
 Let us give an example where the associated rigid-analytic space is empty. The example will be of PEL type.  Let  $F_0=\BQ_p$ and take $B=F$ to be a {\it ramified} quadratic extension of $F_0$, and the involution $*$ the non-trivial Galois automorphism, and $V$ to be an $F$-vector space of dimension $2$. Then $G$ is the group of unitary similitudes, as follows.

We fix a uniformizer $\pi$ of $\CO_F$ such that $\pi^2 = \pi_0 \in \BZ_p$ (recall that $p \neq 2$ was our blanket
assumption in the PEL case). Then there exists a unique hermitian form $h$ on $V$ such that
\begin{equation*}
 (x, y) = {\rm Tr}_{F/\BQ_p} \pi^{-1} h (x, y) , \quad x, y \in V \, .
\end{equation*}
Recall that $(\, ,\, )$ is the symplectic form from the simple rational RZ data. Then the group $G$ of subsection \ref{RZdata} can be identified with the group ${\rm GU} (V, h)$ of unitary similitudes of
the hermitian vector space $(V, h)$. We note that
\begin{equation*}
G_{\rm der}  \simeq \begin{cases}
\begin{aligned}
              & \SL_{2} & \text{ when}&\text{  $(V, h)$  is split}\\
& B^1 & \text{ when }&\text{$(V, h)$ is  non-split}
\end{aligned}
             \end{cases}
\end{equation*}
Here $B^1$ denotes the norm-$1$-group of the quaternion division algebra $B$ over $\BQ_p$, and we recall that  $(V, h)$
is called split or non-split depending on whether $V$ contains an isotropic vector or not.

In the example we take $(V, h)$ to be split and fix a basis $\{ e_1, e_2 \}$ of $V$ such that the hermitian
form $h$ is given by the matrix
\begin{equation*}
 \left( \begin{array}{cc}
             0 & 1 \\
1 & 0
            \end{array} \right) \, .
\end{equation*}
We fix the conjugacy class $\{\mu \}$ by declaring that for any $\mu \in \{ \mu \}$ the eigenspaces $V_0$
and $V_1$ of $V \otimes_E \bar{\BQ}_p$ are one-dimensional isotropic subspaces (case of {\it signature} $ (1,1)$). Then
$E_{\{\mu\}} = \BQ_p$.
We note that \cite{PRLM3}, 1.b.3, b),
\begin{equation*}
 \pi_1 (G)_\Gamma \simeq \BZ / 2 \oplus \BZ \, ,
\end{equation*}
\begin{equation*}
 {\rm via} \quad g \longmapsto 
 \left( \frac{c(g)}{{\rm det} (g)} \, , {\rm val} \, c(g) \right) \, .
\end{equation*}
Consider the two elements in $B(G)_{\rm basic}$ represented by
\begin{equation*}
 b_1 = \left( \begin{array}{cc}
               0 & \pi \\
- \pi & 0
              \end{array} \right) \, , \quad
b_2 = \left( \begin{array}{cc}
               \pi & 0 \\
0 & \pi
              \end{array} \right) \, . 
\end{equation*}
Since
\begin{equation*}
 \mu^{\natural} = (0 , 1) \in \BZ/2 \oplus \BZ \, ,
\end{equation*}
we see that
\begin{equation*}
 [b_1] \in B (G, \{\mu\}) \, , \quad [b_2] \in A(G, \{\mu\})\setminus B (G, \{\mu\}) \, .
\end{equation*}
Now let us consider simple  integral RZ data $\CD_{\BZ_p}$ corresponding to an $\CO_F$-lattice
$\Lambda \subset V$ such that
\begin{equation*}
 \Lambda^{\vee} = \pi \Lambda
\end{equation*}
(since $V$ is split, such a lattice exists). Therefore for the formal moduli problem we consider objects $(X, \iota, \lambda)$, 
where $X$ is a $p$-divisible group of height $4$ and dimension $2$, and where
\begin{equation*}
 \iota : \CO_F \lra \End (X)
\end{equation*}
is such that
\begin{equation*}
{\rm char} \, (\iota (a) \vert \Lie X) =  T^2- {\rm Tr}_{F/\BQ_p}(a)  T +{\rm Nm}_{F/\BQ_p}(a)  \, , \forall a \in \CO_F ,
\end{equation*}
and where $\lambda$ is a polarization whose Rosati involution induces the non-trivial Galois automorphism
of $F/\BQ_p$ and with
\begin{equation*}
 \Ker \lambda = X [ \iota (\pi)] \, .
\end{equation*}
Consider framing objects $(\BX_1, \iota_{\BX_1}, \lambda_{\BX_1})$ and 
$(\BX_2, \iota_{\BX_2}, \lambda_{\BX_2})$ for $b_1$ resp. $b_2$  (cf. \cite{KR-alt},\S 5, case d) for the explicit construction of these framing objects).

\smallskip

\noindent{\bf Claim:} {\it 
 The formal schemes $\CM_1$,  resp. $\CM_2$,  for either choice of framing object have the following structure:
\begin{equation*}
 \CM_1 \cong \coprod_{\CL} \Spf\, W[[ t ]] \, ,\quad  \CM_2 \simeq \coprod_{\CL} \Spec\,  \BF \, ,
\end{equation*}
where $\CL$ is a countable index set. Here $\BF=\overline\BF_p$ denotes the residue field of $\mathcal{O}$ and $W=W(\BF)$ denotes the ring of Witt vectors.} 
\begin{proof}
 Let us determine $\CM_2(\BF)$. The Dieudonn\'{e} module of $\BX_2$ with its action by $\CO_F$ and its Frobenius endomorphism is given by
\begin{equation*}
 M (\BX_2) = \Lambda \otimes_{\BZ_p} W \, , \, \underline F = \pi \cdot (\id_{\Lambda} \otimes \sigma) \, ,
\end{equation*}
where the alternating form defined by the polarization is given by
\begin{equation*}
 \langle x, y \rangle = {\rm Tr}_{F/\BQ_p} \pi^{-1} h ( x , y ) \, .
\end{equation*}
Here the hermitian form $h$ is extended to $\Lambda\otimes_{\BZ_p} W$ by
\begin{equation*}
 h ( v \otimes \alpha , w \otimes \beta ) = \alpha \cdot \sigma (\beta) \cdot h (v, w) 
\in F \otimes_{\BZ_p}W \, .
\end{equation*}
We identify $F \otimes_{\BZ_p} W$ with $\breve{F}$, and write $N$ for $M (\BX_2) \otimes_{\BZ_p} \BQ_p$.
Then
\begin{equation*}
 \CM_2 (\BF) = \{ M \text{ $\CO_{\breve F}$-lattice  in $N$} \mid \pi^2 M \subset^2 \underline F M \subset^2 M , M ^\vee = \pi M \} \, .
\end{equation*}
Now,  following \cite{KR-alt}, proof of Lemma 3.2, and using the fact that $V$ is split, one proves the following points.
Let $M \in \CM_2 (\BF)$. Then 

\smallskip

\noindent(i) either $\sigma (M) = M$, i.e., $\underline F M=\pi M$,  and then for the lattice $\Lambda=\Lambda(M) := M^{<\sigma>}$
in $V$ we have
\begin{equation}\label{vertlat}
 \Lambda^{\vee} = \pi \Lambda \, ,
\end{equation}

\smallskip

\noindent (ii) or $M \neq \sigma (M)$ and $ M \subset^1 M + \sigma (M) = \sigma(M + \sigma (M))$.

\smallskip

We claim that the alternative (ii) does not occur. Indeed, in this case one can choose an $\CO_{\breve{F}}$
-basis $f_1, f_2$ for $M$ such that $f_1$ is isotropic and $\sigma (f_1) = f_1$ and such that in terms
of this basis the form $h \vert M \times M$ is given by
\begin{equation*}
 h \vert M \times M : \left( \begin{array}{cc}
               0 & \pi^{-1} \\
- \pi^{-1} & d
              \end{array} \right) \, , \, d \in \breve{F} \, .
\end{equation*}
But then
\begin{equation*}
 M + \sigma (M) = [f_0, \pi^{-1}f_0 , f_1 ] = [\pi^{-1} f_0 , f_1] \, ,
\end{equation*}
cf. \cite{KR-alt}, proof of Lemma~3.2. Hence
\begin{equation*}
 (M + \sigma (M))^{\vee} = [\pi f_0, \pi^2 f_1 ] = \pi^2 \cdot (M + \sigma (M) ) \, .
\end{equation*}
Let $\Lambda (M) := \pi \cdot (M + \sigma (M))^{<\sigma>}$. Then $\Lambda (M)$ is a lattice in $V$ which is
selfdual for $h$. The hermitian form $h$ induces on the $\BF_p$-vector space $V(M) = \Lambda (M) / \pi \Lambda (M)$ a \emph{symmetric}
bilinear form with values in $\BF_p$, which is isotropic (we are using here that $V$ is split). Furthermore, the set of
all $M'$ of type (ii) with $\Lambda (M') = \Lambda (M)$ corresponds to the set of all isotropic lines in
the $2$-dimensional $\BF$-vector space $V (M) \otimes_{\BF_p} \BF$ which are \emph{not} $\BF_p$-rational.
But this set is empty (there are only two isotropic lines and both are $\BF_p$-rational).

We denote by $\CL$ the set of lattices $\Lambda$ in $V$ satisfying the condition \eqref{vertlat}. 

In order to show the claim for $\CM_2$, we need to show that a point in $\CM_2 (\BF)$ has no deformations.
This follows from the theory of local models, cf. \cite{PRS}. In fact, the  corresponding naive local model is the disjoint union of a projective line over $\BZ_p$ and an isolated point. And the point  on the naive local model associated to any point of type (i) of $\CM_2(\BF)$ is
the isolated point (it is the unique point of the local model where the spin condition is violated, cf. \cite{PRS}, Remark~2.35). 

Now we discuss the claim for  $\CM_1$. Let $\CM$ be the universal formal deformation space  of the $p$-divisible group of a supersingular
elliptic curve over $\BF$,  and let  $Y$ be the universal object  over $\CM$. Then  the Serre construction 
\begin{equation*}
 Y \longmapsto \CO_F \otimes Y
\end{equation*}
induces an isomorphism between $\CM$  and a connected component of $\CM_1$ (use
$(\CO_F \otimes Y)^{\vee} = \pi^{-1} \CO_F \otimes Y)$. This easily implies the claim.\end{proof}

The case of $[b_2]$ shows that Question \ref{questnonempty} is non-trivial. 
\end{example}

Fix a simple  integral RZ datum, and let $\CM$ be the corresponding formal scheme over $\Spf\, \CO$. Then $\CM$  is not defined over $\Spf\, \CO_E$; however it possesses a {\it Weil descent datum} to $\Spf\, \CO_E$, in the following sense, cf.~\cite{RZ}, Def.~3.45. Let $\tau\in\Gal(\breve E/E)$ be the relative Frobenius automorphism. Let $\tau_*(\CM)$ be the formal $\CO$-scheme whose underlying formal scheme is $\CM$, but whose structure morphism is the composition 
$$
\CM\to \Spf\, \CO \buildrel{\tau^*}\over\longrightarrow\Spf\,\CO .
$$ Then a Weil descent datum on $\CM$ is an isomorphism $\tau_*(\CM)\to \CM$, comp. \cite{DOR}, p. 224. 

\subsection{Passing to the generic fiber}

Starting with an integral RZ datum $\CD_{\BZ_p}$, we obtain  the formal scheme $\CM=\CM_{\CD_{\BZ_p}}$ over $\CO$, and its {\it generic fiber} $\CM^{\rm rig}$, a rigid-analytic space over $\breve E$.

The following conjecture represents the  connection between the theory of RZ spaces and the conjectural theory of local Shimura varieties of section \ref{locShvar} below.  In most EL cases it follows from \cite{Scholzewein}. 
\begin{conjecture}\label{conjindep} Let ${\CD_{\BZ_p}}$ be an integral RZ datum such that $\CG$ is a parahoric group scheme. Then, up to isomorphism, the rigid space associated to 
 the formal scheme $\CM_{\CD_{\BZ_p}}$ with its action of $J_b(\BQ_p)$  depends  on ${\CD_{\BZ_p}}$ only via the quadruple $(G, [b], \{\mu\}, \CG)$. 
\end{conjecture}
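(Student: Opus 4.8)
The plan is to show that the generic fiber of $\CM_{\CD_{\BZ_p}}$, together with its $J_b(\BQ_p)$-action, can be reconstructed from $(G,[b],\{\mu\},\CG)$ without reference to the auxiliary data $(F,B,V,(\,,\,),*)$ and the lattice $\Lambda$. The key point is that the generic fiber classifies $p$-divisible groups only up to the data that survive after inverting $p$, i.e., up to the associated $\CO_B\otimes\BZ_p$-\emph{rational} Dieudonn\'e structure (an $F$-isocrystal with $G$-structure). First I would fix, as in Remark \ref{natdef}, a framing object $\BX$ with its additional structures, and observe that $\CM_{\CD_{\BZ_p}}\simeq\tilde\CM$ depends on $\BX$ only up to isogeny respecting the structures; on the level of Dieudonn\'e modules this isogeny class is exactly the datum of $[b]\in B(G)$ together with the group $G$ and its integral model $\CG$. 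So the formal scheme, and a fortiori its generic fiber, depends only on $(G,[b],\{\mu\},\CG)$ together with a choice of how $(G,\CG)$ is realized as $\GL_{\CO_B}(\Lambda)$ (or its PEL analogue) --- the content of the conjecture is that this last choice does not matter on the generic fiber.

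The main step is to eliminate the dependence on the realization. Here I would invoke the Tannakian reformulation: a simple rational RZ datum $\CD$ amounts to a faithful $F$-rational (or $F\otimes F$, in the PEL case) representation of $G$, i.e., a fibre functor on $\mathrm{Rep}_F(G)$, and the moduli problem defining $\CM^{\rig}$ can be phrased as classifying filtered $G$-isocrystals (or $G$-crystals on the integral level) of type $\{\mu\}$ with a quasi-isogeny to the $G$-isocrystal attached to $b$, plus the level-$\CG(\BZ_p)$-structure encoded by $\Lambda$. Two different choices $\CD,\CD'$ give algebraic groups $G,G'$ that are inner forms of each other, related by a class $\eta\in H^1(\BQ_p,G)$ as in Remark \ref{natdef}; but passing to the generic fiber one can trivialize the torsor responsible for $\eta$ over the (rigid-)generic point, because a $p$-divisible group up to isogeny is ``the same'' as its rational Dieudonn\'e module and the extra PEL/EL structure only serves to cut $G$ out of $\GL_n$. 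Concretely, one compares the two moduli functors on complete local Noetherian $\CO$-algebras with residue field $\BF$ via Dieudonn\'e/Grothendieck--Messing theory in the generic fiber, using that the Kottwitz condition \eqref{kottwitzbed} for $\{\mu\}$ and the quasi-isogeny condition are formulated purely in terms of $(G,[b],\{\mu\})$ once one is allowed to work up to isogeny.

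For the precise mechanism I would follow two complementary routes and cite whichever is cleanest in each case. In the EL case --- and conjecturally in general --- the cleanest route is Scholze--Weinstein \cite{Scholzewein}: the inverse limit of the $\CM^{\rig}$-tower over all level structures represents a functor (in the $\GL_n$ case, a functor on perfectoid algebras involving Breuil--Kisin--Fargues modules) that manifestly depends only on $(G,[b],\{\mu\},\CG)$; then the individual $\CM^{\rig}=\varprojlim_K\CM^{\rig}_K$ at finite parahoric level $\CG(\BZ_p)$ is recovered as a quotient of this limit by $\CG(\BZ_p)$, so it inherits the same independence, and the $J_b(\BQ_p)$-action is visibly intrinsic. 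When \cite{Scholzewein} does not directly apply, the fallback is the Rapoport--Zink comparison: embed both $\CD$ and $\CD'$ into a common $\GL_n$-datum via their defining representations, use that for $\GL_n$ the generic fiber is a moduli space of $p$-divisible groups-up-to-isogeny with extra structure that depends only on the isocrystal, and then carve out the $G$- resp. $G'$-locus; since $G\cong G'$ over $\BQ_p$ as inner forms and the relevant cohomological obstruction $\eta$ dies on the generic fiber (the filtered isocrystal attached to a weakly admissible pair, via Proposition \ref{Fontaininequ}, pins down the $G$-structure up to inner twist, but inner twists are invisible on rigid points), the two loci, with their $J_b(\BQ_p)$-actions, are identified.

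The hard part will be making the last identification uniform across all EL and PEL cases without an unconditional version of \cite{Scholzewein}: one needs a clean statement that a $p$-divisible group with $\CO_B$-action and polarization, considered up to quasi-isogeny over a rigid base, is functorially the same as its rational Dieudonn\'e module with the induced $G$-structure, and that the parahoric level structure given by $\Lambda$ depends only on the conjugacy class of $\CG(\BZ_p)$ in $G(\BQ_p)$ and not on $\Lambda$ itself. In the ramified PEL cases this is delicate precisely because, as recalled in the excerpt, $\CG(\BZ_p)$ need not be parahoric and the naive local model need not be flat --- which is exactly why the conjecture is stated under the hypothesis that $\CG$ is a parahoric group scheme. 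I would therefore expect the honest proof to split into: (a) the parahoric hyperspecial/unramified cases, where formal smoothness makes everything transparent; (b) the remaining parahoric cases, handled by the Scholze--Weinstein inverse-limit description where available and by an explicit lattice-chain analysis otherwise; and (c) a reduction, via the Tannakian/augmented-group-scheme formalism of \cite{DOR} alluded to in Remark \ref{natdef}, showing that the output is insensitive to the chosen faithful representation. This last reduction is the conceptual crux and the place where the current state of the art still leaves the statement conjectural in full generality.
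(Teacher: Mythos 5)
The statement you set out to prove is labeled a \emph{conjecture} in the paper, and the paper offers no proof of it. The only remark the authors make is the single sentence preceding the conjecture: ``In most EL cases it follows from \cite{Scholzewein}.'' So there is nothing to compare your argument against; what you have written is a strategy sketch for an open problem, and you have, to your credit, said so explicitly in your closing paragraph.

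As a sketch, your proposal is reasonable and aligned with the paper's pointer: the core difficulty you isolate --- that the integral RZ datum carries a choice of faithful representation of $G$ (the data $(F,B,V,\Lambda)$, or their PEL analogues), and one must show the generic fiber is insensitive to it --- is indeed the substance of the conjecture, and your primary route through the Scholze--Weinstein inverse-limit description is exactly the one the authors indicate in the EL case. One place your reasoning is off target: in the conjecture the quadruple $(G,[b],\{\mu\},\CG)$ is held fixed, so you are comparing two realizations of the \emph{same} group $G$ with the \emph{same} parahoric $\CG$, not two inner forms. The inner-twisting discussion in Remark~\ref{natdef}, with the class $\eta\in H^1(\BQ_p,G)$, concerns the different situation of two integral RZ data inducing the same framing object and the same formal scheme but \emph{a priori} different groups $G$, $G'$; it does not directly address two data realizing the identical group $G$ via different $(B,V,\Lambda)$. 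Invoking ``trivializing the torsor responsible for $\eta$ over the generic fiber'' is therefore not quite the right lever here, and this part of the argument would need to be replaced by a genuine comparison of the moduli functors for fixed $G$. Finally, be aware that the ``fallback'' route you describe --- embedding both data into a common $\GL_n$-datum and carving out the $G$-locus --- runs into precisely the obstruction recorded in Remark~\ref{commondef}: the tannakian formalism is unavailable in the naive sense because $\{\mu\}$ is constrained to be minuscule, so a general $G$-isocrystal/$G$-bundle reformulation of these moduli spaces is not currently at hand. That is why the statement remains a conjecture.
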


 Let $\mathcal T$  denote the local system over $\CM^{\rig}$ defined by the $p$-adic Tate module of the universal $p$-divisible group on $\CM$, together with its additional structure (action of $\CO_B$ in the EL case; action of $\CO_B$ and polarization pairing in the PEL case). Set $\CV=\CT\otimes\BQ_p$.  
 \begin{proposition} Let $V'$  be the form of $V$ defined by any representative of the cohomology class $c([b], \{\mu\})\in H^1(\BQ_p, G)$, cf.~Proposition \ref{Fontaininequ}. 
 For the fiber of the $\BQ_p$-local system $\CV$ at any $x\in\CM_{\CD_{\BZ_p}}^{\rig}(\overline {\breve E})$ there is an isomorphism $\CV_x\simeq V'$ (respecting the additional structures on both sides).  
 \end{proposition}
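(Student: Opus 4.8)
The plan is to compute the fiber of $\CV$ at a point $x$ by relating it to the crystalline structure on the corresponding $p$-divisible group. First I would recall that a point $x\in\CM_{\CD_{\BZ_p}}^{\rig}(\overline{\breve E})$ corresponds to a triple $(X,\iota,\rho)$ (resp. $(X,\iota,\lambda,\rho)$) over $\CO_{\overline{\breve E}}$ (after extending over the valuation ring), together with the quasi-isogeny $\rho$ over the special fiber identifying the rational Dieudonn\'e module of $X_{\overline{\BF}}$ with $(V\otimes_{\BQ_p}\breve\BQ_p, b\sigma)$. The rational $p$-adic Tate module $V_p(X)\otimes\BQ_p$ over $\overline{\breve E}$ is then a crystalline Galois representation of $\Gal(\overline{\breve E}/\breve E)$ (a point argument: over an algebraically closed residue field the Galois group is just the inertia, and here even $K=\breve E$ so we use $\Gal(\bar{\breve E}/\breve E)$ acting on the $p$-adic field — one has to be a little careful that the relevant representation is of the absolute Galois group, as in the period-domain discussion preceding Proposition~\ref{Fontaininequ}), whose associated weakly admissible filtered isocrystal is exactly $\CI(V)=(V\otimes_F\breve\BQ_p, b\sigma, \Fil^\bullet_\mu)$ for the Hodge filtration $\mu$ given by $x$ (this is the classical crystalline theory of $p$-divisible groups: the filtration on the Dieudonn\'e module is the Hodge filtration coming from $\Lie X$, and the Kottwitz condition \eqref{kottwitzbed} pins down its type as $\{\mu\}$). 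Thus $(b,\mu_x)$ is a weakly admissible pair in $G$, and $\CV_x$ — as a $\BQ_p$-vector space with its $B$-action (resp.\ $B$-action and polarization pairing) — is precisely the underlying space of the crystalline special $F$-representation $\omega(\CI(V))$ associated to this weakly admissible pair.

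Next I would invoke the Tannakian reformulation used just before Proposition~\ref{Fontaininequ}: the assignment $V'\in\operatorname{Rep}_F(G)\mapsto$ (underlying $F$-space of $\omega(\CI(V')))$ is the fiber functor $\tilde{\mathcal F}$, and $\Hom(Ver,\tilde{\mathcal F})$ is a right $G$-torsor whose class is $\cls(b,\mu)\in H^1(F,G)$. Since by Proposition~\ref{Fontaininequ} we have $\cls(b,\mu)=c([b],\{\mu\})$, the fiber functor $\tilde{\mathcal F}$ applied to the standard representation $V$ of $G$ gives exactly the twist $V'$ of $V$ by the class $c([b],\{\mu\})$, compatibly with all tensor constructions — hence with the $B$-module structure in the EL case, and with the $B$-module structure together with the alternating form (up to scalar) in the PEL case, since these structures are encoded in the Tannakian category $\operatorname{Rep}_F(G)$. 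This identifies $\CV_x\simeq V'$ as claimed, with all additional structures.

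The one genuine subtlety — and the step I expect to be the main obstacle to make fully rigorous — is the comparison between the \emph{$p$-adic \'etale} realization $\CV_x=V_p(X_x)\otimes\BQ_p$ and the output $\omega(\CI(V))$ of Fontaine's functor on the \emph{filtered isocrystal} side: one must verify that the filtered isocrystal attached to $x$ really is $\CI(V)$ with Frobenius $b\sigma$ and Hodge filtration $\mu_x$, that the $B$-action and polarization on the \'etale side match those built into $G$ and into $\CI(V)$, and that the ``special'' condition $(*)$ of \cite{FR} is automatically satisfied here (it is, because $X$ carries an $\CO_B$-action, i.e.\ is a formal $\CO_F$-module in the EL case — cf.\ the parenthetical remark after the equivalence $\omega$ in the text — and one needs the analogous statement in the PEL case). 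This is essentially the content of \cite{RZ}, \S5, where the period morphism and the local system $\CV$ are constructed and shown to factor through the weakly admissible locus; so I would reduce to citing that, and the real work is just to bookkeep the compatibility of additional structures through the chain of equivalences. The independence of $x$ of the isomorphism class of $V'$ is then clear since $c([b],\{\mu\})$ depends only on $([b],\{\mu\})$, not on the chosen point.
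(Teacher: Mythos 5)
Your proof is correct and follows essentially the same route as the paper: identify $\CV_x$ with the underlying space of Fontaine's functor applied to the weakly admissible filtered isocrystal defined by $x$ via the framing $\rho$, observe that the resulting cocycle class measuring the difference with $V$ is $\cls(b,\mu_x)$ by definition, and conclude by Proposition~\ref{Fontaininequ} that this equals $c([b],\{\mu\})$. The paper's proof is a terser version of exactly this argument, and the "subtlety" you flag at the end (matching the filtered isocrystal attached to $x$ with $\CI(V)$ and tracking the additional structures) is implicitly absorbed into the citation of \cite{RZ}, \S 5 in both treatments.
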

 Here, in the PEL case, respecting the forms on both sides means up to a scaling factor in $\BQ_p^\times$ since, to interpret the polarization pairing as taking values in $\BQ_p$,  one has to trivialize $\BQ_p(1)$---which is unique only up to  $\BQ_p^\times$. 
 \begin{proof}
 The point $x$  is represented by a $p$-divisible group $X$ with additional structure over the ring of integers in a finite field extension of $\breve E$. This defines   a filtered isocrystal and, via the framing and $\rho$,  a pair $(b, \mu)$ with $b\in [b]$ and $\mu\in\{\mu\}$. This pair is weakly admissible, and the cocycle class measuring the difference between $\CV_x$ and $V$ is equal to $cls(b, \mu)$, cf. Proposition  \ref{Fontaininequ}. Applying Proposition  \ref{Fontaininequ}, the assertion follows.
 \end{proof}
 In the case when the group scheme $\CG$ is parahoric, we can say more.
 \begin{proposition}\label{trivpara}
  Assume that the group scheme $\CG$ over $\BZ_p$ attached to ${\CD_{\BZ_p}}$ is a parahoric group scheme. Then for the fiber of the $\BZ_p$-local system $\CT$ at any $x\in\CM_{\CD_{\BZ_p}}^{\rig}(\overline {\breve E})$ there is an isomorphism $\CT_x\simeq \Lambda$ (respecting the additional structures on both sides).  In particular, $c([b], \{\mu\})=0$. 
\end{proposition}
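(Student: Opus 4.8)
The plan is to realize the fibre $\CT_x$ as the $\BZ_p$-points of a torsor under $\CG$ and then to use the vanishing of $H^1$ for parahoric group schemes. Recall that $x$ is represented by a $p$-divisible group $X$ with its additional structure over $\CO_K$, the ring of integers in a finite extension $K$ of $\breve E$, and that $\CT_x$ is the $p$-adic Tate module $T=T_p(X)$ equipped with the induced action of $\CO_B$ and, in the PEL case, with the pairing coming from the polarization $\lambda$. By the preceding proposition, $T\otimes_{\BZ_p}\BQ_p\cong V'$, the form of $V$ of class $c([b],\{\mu\})$; so it suffices to upgrade this to an isomorphism $T\cong\Lambda$ over $\BZ_p$ compatible with all structures, the vanishing of $c([b],\{\mu\})$ then following by passage to generic fibres.

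First I would show that the functor $P=\underline{\Isom}(T,\Lambda)$ on $\BZ_p$-algebras $R$ --- the set of $\CO_B\otimes R$-linear isomorphisms $T\otimes R\to\Lambda\otimes R$, preserving the polarization pairing up to a unit scalar in the PEL case --- is a torsor under $\CG$ for the \'etale topology on $\Spec\,\BZ_p$. As $\CG$ is smooth, this comes down to checking that $T$ and $\Lambda$, with their structures, become isomorphic over $\overline{\BZ}_p$. For the $\CO_B$-module structure this is formal: $B$ splits over $\overline{\BQ}_p$ and $\CO_B$ is a maximal order, so $\CO_B\otimes_{\BZ_p}\overline{\BZ}_p$ is a finite product of matrix algebras and $\CO_B\otimes\overline{\BZ}_p$-lattices in a fixed module are classified by their rank, which agrees for $T$ and $\Lambda$ since $T\otimes\BQ_p\cong V'$ has the same $B$-module type as $V$. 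In the PEL case one must moreover match the polarization form; here the moduli condition that $\Ker\lambda\subset X[\iota(\varpi)]$ have order $|\Lambda/\Lambda^\vee|$ should be translated into an isomorphism of $T^\vee/T$ with $\Lambda^\vee/\Lambda$ compatible with the induced pairings, whence $T$ and $\Lambda$ are isometric over $\overline{\BZ}_p$. I expect this verification --- pinning down the isomorphism type of the pairing on $\CT_x$ integrally --- to be the one step that requires genuine, if routine, care. Alternatively one may obtain the local triviality of $P$ from integral $p$-adic Hodge theory, using that, by the parahoric hypothesis, the Dieudonn\'e lattice of the reduction of $X$ lies in the $\CG(\breve{\BZ}_p)$-orbit of $\Lambda\otimes_{\BZ_p}\breve{\BZ}_p$.

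Second I would invoke the vanishing $H^1_{\text{\'et}}(\Spec\,\BZ_p,\CG)=0$; this is exactly where the parahoric hypothesis is used, since it says that $\CG$ is smooth and affine of finite type over $\BZ_p$ with \emph{connected} special fibre. Indeed, any $\CG$-torsor $\CP$ is then a smooth affine $\BZ_p$-scheme whose special fibre is a torsor under the connected smooth group $\CG\otimes\BF_p$ over $\BF_p$; by Lang's theorem it has an $\BF_p$-point, which lifts to a $\BZ_p$-point by smoothness and $p$-adic completeness. Applied to $\CP=P$ this yields a $\BZ_p$-point, i.e.\ an isomorphism $\CT_x\cong\Lambda$ respecting all structures. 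Finally, the generic fibre $P\otimes\BQ_p=\underline{\Isom}(V',V)$ is the $G$-torsor with class $\cls(b,\mu)=c([b],\{\mu\})$ by Proposition~\ref{Fontaininequ}, and it is now trivial, so $c([b],\{\mu\})=0$. The connectedness of the special fibre cannot be dropped: for non-parahoric $\CG$ one can have $c([b],\{\mu\})\neq 0$, e.g.\ $[b_2]\in A(G,\{\mu\})\setminus B(G,\{\mu\})$ in Example~\ref{exempty}.
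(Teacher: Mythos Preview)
Your proof is correct and follows essentially the same approach as the paper: the obstruction to an isomorphism $\CT_x\simeq\Lambda$ lies in $H^1(\BQ_p,\CG(\bar\BZ_p))$, which vanishes by Lang's theorem applied to the connected special fibre together with smoothness. Your version is more explicit about the torsor structure and the lifting step, but the argument is the same.
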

\begin{proof}
(cf.~proof of \cite{RZ}, Lemma 5.33.)  The cohomology class measuring the difference between these two lattices lies in $H^1(\BQ_p, \CG(\bar\BZ_p))$, and this cohomology set is trivial (here the connectedness of $\CG$ enters through Lang's theorem). 
\end{proof}
\begin{corollary}
Let ${\CD_{\BZ_p}}$ be an RZ datum such that the group scheme $\CG$ is parahoric, and such that $\CM_{\CD_{\BZ_p}}^{\rig}\neq\emptyset$. Then $[b]\in B(G, \{\mu\})$. 
\end{corollary}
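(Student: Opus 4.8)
The plan is to extract the conclusion directly from the two propositions on the fiber of the Tate local system together with Proposition \ref{Fontaininequ}. First I would observe that the hypothesis $\CM_{\CD_{\BZ_p}}^{\rig}\neq\emptyset$, combined with the fact that a rigid-analytic space over $\breve E$ that is locally of finite type and non-empty acquires a point in $\CM_{\CD_{\BZ_p}}^{\rig}(\overline{\breve E})$, furnishes some $x\in \CM_{\CD_{\BZ_p}}^{\rig}(\overline{\breve E})$. (One may also, if preferred, note that $x$ exists over the ring of integers of a finite extension of $\breve E$ as in the proof of the preceding proposition, which is what one needs for the deformation-theoretic input.)

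Next I would recall, as in the proof just given for the fiber $\CV_x$, that the $p$-divisible group with additional structure representing $x$ produces a weakly admissible pair $(b,\mu)$ with $b\in[b]$ and $\mu\in\{\mu\}$. By Proposition \ref{Fontaininequ} applied to this weakly admissible pair, the $\sigma$-conjugacy class $[b]$ is acceptable for $\{\mu\}$, i.e.\ $[b]\in A(G,\{\mu\})$. (Equivalently one could invoke Proposition \ref{propperne}: the non-emptiness of $\CM^{\rig}$ implies, via the period morphism, that $\breve\CF(G,b,\{\mu\})^{\rm wa}$ is non-empty, hence $[b]\in A(G,\{\mu\})$.)

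Then I would invoke Proposition \ref{trivpara}: since $\CG$ is a parahoric group scheme, the fiber $\CT_x$ of the $\BZ_p$-local system is isomorphic to $\Lambda$ respecting all additional structures, and in particular $c([b],\{\mu\})=0$ in $H^1(\BQ_p,G)$. Finally, by the very definition of the map $c\colon A(G,\{\mu\})\to H^1(\BQ_p,G)$ and the fact that $B(G,\{\mu\})=c^{-1}(0)$, the two facts $[b]\in A(G,\{\mu\})$ and $c([b],\{\mu\})=0$ give $[b]\in B(G,\{\mu\})$, as desired.

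I do not expect a genuine obstacle here: the statement is a formal corollary of the preceding results. The only point requiring a little care is the same one already handled in the proof of Proposition \ref{trivpara}, namely that the parahoric (equivalently, connected) hypothesis on $\CG$ is exactly what makes $H^1(\BQ_p,\CG(\bar\BZ_p))$ trivial via Lang's theorem, so that $c([b],\{\mu\})$ vanishes; without connectedness this step fails, as the example with $\Lambda^\vee=\pi\Lambda$ and $[b_2]$ illustrates.
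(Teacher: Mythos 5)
Your proposal is correct and follows the same route the paper intends: the corollary is an immediate consequence of Proposition \ref{trivpara} (which gives $c([b],\{\mu\})=0$ under the parahoric and non-emptiness hypotheses) together with the fact that $B(G,\{\mu\})=c^{-1}(0)\subseteq A(G,\{\mu\})$. The only superfluous step is your re-derivation of $[b]\in A(G,\{\mu\})$ via a weakly admissible pair, since this is already built into the definition of a simple rational RZ datum in subsection \ref{RZdata}.
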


Note that this corollary explains  why the generic fiber of $\CM_2$ in Example \ref{exempty} is empty. Indeed, in this case the group scheme $\CG$ is parahoric.

 \begin{remark} The first of the last two propositions is a strengthening of  \cite{RZ}, Lemma 5.33. The second proposition is contained in  \cite{RZ}, Lemma 5.33, except that in loc.~cit. the hypothesis that $\CG$ be parahoric is implicitly assumed, but is omitted from the statement.  The same omission  occurs in \cite{Rapo-ICM}, Property 3.3 (cf. also remarks after Hope 4.2 in \cite{Rapo-ICM}). 
 \end{remark}
 We conjecture that the converse of the last proposition holds. This would constitute a partial  answer to Question \ref{questnonempty}.
 \begin{conjecture}\label{conjnonempty} Let $\CD_{\BZ_p}$ be an integral RZ datum such that $[b]\in B(G, \{\mu\})$. Assume that the group scheme $\CG$ is parahoric. 
Then the  rigid-analytic space over $\breve E$ associated to the formal scheme $\CM_{\CD_{\BZ_p}}$ is non-empty. 
\end{conjecture}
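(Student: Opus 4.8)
The plan is to prove the conjecture by constructing, over some finite extension $K$ of $\breve E$, a $K$-valued point of $\CM_{\CD_{\BZ_p}}^{\rig}$ directly via $p$-adic Hodge theory --- that is, by exhibiting a $p$-divisible group over $\CO_K$ carrying all the structures of the moduli problem, together with a quasi-isogeny to the framing object.

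First I would exploit the hypothesis $[b]\in B(G,\{\mu\})\subseteq A(G,\{\mu\})$. By Proposition~\ref{propperne} (cf.~Remark~\ref{compwithRZ}) there is a weakly admissible pair $(b,\mu)$ with $b\in[b]$ and $\mu\in\{\mu\}$, and $\mu$ may be chosen defined over a finite extension $K/\breve E$. Since $[b]$ is \emph{neutral}, Proposition~\ref{Fontaininequ} gives $\cls(b,\mu)=c([b],\{\mu\})=0$, so Fontaine's functor $\omega$ attaches to $(b,\mu)$ a crystalline (special) $\BQ_p$-representation of $\Gal(\bar K/K)$ whose underlying space --- together with the $B$-action and, in the PEL case, the polarization pairing up to $\BQ_p^{\times}$ --- is isomorphic to $V$ with its extra structure, and not to a non-trivial inner twist $V'$.

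Because $\{\mu\}$ is minuscule, this representation has Hodge--Tate weights in $\{0,1\}$, so by Fontaine's classification of such crystalline representations --- equivalently Breuil--Kisin theory, or the moduli description of the infinite-level tower in \cite{Scholzewein} --- it arises from a $p$-divisible group $X$ over $\CO_K$, unique up to isogeny. Full faithfulness of the generic-fibre functor transports the $B$-action to an $\CO_B$-action $\iota$ and, in the PEL case, the pairing to a polarization $\lambda$; after replacing $X$ by a suitably isogenous $p$-divisible group over $\CO_K$ (the isogeny extends, being the scheme-theoretic closure of its kernel over $K$) I would arrange that its $p$-adic Tate module is an $\CO_B$-stable lattice of the same type as $\Lambda$ --- such a lattice exists inside $V'\cong V$ precisely because $\cls(b,\mu)=0$, and at this point one also invokes, as in the proof of Proposition~\ref{trivpara}, the triviality of $H^1(\BQ_p,\CG(\bar\BZ_p))$ coming from connectedness of the parahoric $\CG$ and Lang's theorem. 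The relation $b\in[b]$ then furnishes, after identifying the isocrystal of $X\times_{\CO_K}\BF$ with $(V\otimes_{\BQ_p}\breve\BQ_p,b\sigma)$ compatibly with all structures (up to a scalar in $\breve\BQ_p^{\times}$ in the PEL case), an $\CO_B$-linear quasi-isogeny $\rho\colon X\times_{\CO_K}\BF\to\BX$. Finally one checks the Kottwitz condition \eqref{kottwitzbed}: over $K$ it merely records the $B$-module type of the Hodge filtration, hence of $\mu$, which equals ${\rm char}(b;V_0)$, and since $\Lie\, X$ is free over $\CO_K$ its characteristic polynomial is already determined over $K$. The resulting quadruple $(X,\iota,\lambda,\rho)$ is the sought-for point of $\CM_{\CD_{\BZ_p}}^{\rig}(K)$.

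The hard part --- and the reason this is still only a conjecture --- is to make the passage from the weakly admissible filtered isocrystal with $G$-structure to a $p$-divisible group over $\CO_K$ precise enough that it is genuinely compatible with the \emph{integral} datum $(\CO_B,\Lambda)$, in particular with the almost-selfduality $\varpi\Lambda\subset\Lambda^{\vee}\subset\Lambda$ (or with a periodic lattice chain), and not merely with the generic fibre $(B,V)$; this is automatic in the unramified case and is carried out for $G=\GL_n$, hence in most EL cases, in \cite{Scholzewein}, but in ramified and PEL situations it is entangled with the geometry of the (naive) local model of Remark~\ref{locmodrem} and with the subtle gap between the weakly admissible and the admissible loci with $G$-structure, and a uniform argument would presumably require the perfectoid or Fargues--Fontaine methods alluded to in the introduction. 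An alternative route, which runs into essentially the same obstruction from the reduction side, would be to use the known non-emptiness criterion for the affine Deligne--Lusztig variety attached to $(G,b,\{\mu\},\CG)$ to see that $\CM_{\red}\neq\emptyset$, and then to deduce from topological flatness of the parahoric local model --- known in wide but not complete generality --- that some point of $\CM_{\red}$ lifts to characteristic zero.
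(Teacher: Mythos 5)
The statement you are proving is labeled a \emph{Conjecture} in the paper, and no proof of it is given there; the authors themselves regard it as open. So your ``proposal'' cannot be compared with a paper proof --- the right benchmark is whether your strategy is sound and whether you have correctly located the obstruction. You have done both. Your crystalline route is exactly the one the paper's logic suggests: Proposition~\ref{propperne} gives a weakly admissible pair $(b,\mu)$, Proposition~\ref{Fontaininequ} together with $[b]\in B(G,\{\mu\})$ (so $c([b],\{\mu\})=0$) shows the associated Galois module is a form of $V$ and not a nontrivial inner twist, minusculeness of $\{\mu\}$ forces Hodge--Tate weights in $\{0,1\}$, and Lang's theorem for the connected parahoric $\CG$ (as in Proposition~\ref{trivpara}) lets one hope to rigidify the Tate module to a lattice in the $\CG(\BZ_p)$-orbit of $\Lambda$. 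Your observation that the Kottwitz condition is detected over $K$ because $\Lie\,X$ is $\CO_K$-free is correct and is why that particular condition causes no integral trouble.

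The gap you name is the genuine one, and it is worth sharpening. The problem is not really the passage ``weakly admissible filtered isocrystal with $G$-structure $\leadsto$ $p$-divisible group over $\CO_K$'' in itself (for $\GL_n$ this is in \cite{Scholzewein}, and for $p>2$ Breuil--Kisin handles the abstract $\BQ_p$-representation side), but the \emph{simultaneous} compatibility of the integral structure with the lattice (chain) type, the almost-selfduality $\varpi\Lambda\subset\Lambda^\vee\subset\Lambda$, and --- crucially --- the $\CO_B$-action on $\Lie\,X$. Over $\CO_K$ the Lie algebra of the constructed $X$ is a torsion-free $\CO_B\otimes\CO_K$-module whose generic fibre has the prescribed type $V_0$, but in ramified cases that does not pin down the integral $\CO_B\otimes\CO_K$-module structure uniquely; this is precisely the ambiguity that the (naive) local model of Remark~\ref{locmodrem} is measuring, and why one imposes additional conditions (wedge, spin) in practice. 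Example~\ref{exempty} in the paper shows concretely that when $\CG$ fails to be parahoric the na\"ive count can go wrong; the conjecture is exactly the assertion that parahoricity (via Lang, via flatness of the correct local model) eliminates the ambiguity in the $B(G,\{\mu\})$ case. Your alternative route through nonemptiness of the affine Deligne--Lusztig variety for $(G,b,\{\mu\},\CG)$ plus topological flatness of the parahoric local model is the other strategy the paper alludes to (Remark~\ref{unramifiedRZ}), and, as you say, it runs into the same open issues from the mod-$p$ side. In short: the proposal is a correct and complete account of why the conjecture should hold, together with an accurate statement of what is missing --- which is the appropriate outcome for a statement the paper leaves as a conjecture.
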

\begin{remark} Example \ref{exempty} is an illustration of this conjecture: in this example $\CG$ is a parahoric group scheme, and, in the notation of loc.~cit., for $[b_1]$ (which lies in $B(G, \{\mu\})$), the generic fiber of 
$\CM_{\CD_{\BZ_p}}$ is non-empty, whereas for $[b_2]$ (which does not lie in $B(G, \{\mu\})$), the generic fiber is empty. As already pointed out in Remark \ref{unramifiedRZ}, the conjecture holds true in the case of unramified simple integral RZ data (and in many other cases as well). 

In fact, the converse should hold even in the stronger sense that the set of `special points' should be dense in the generic fiber, in the sense of Remark \ref{remunique} (ii) below. 
\end{remark}
\begin{remark} At this point one might ask why we did not impose in the definition of RZ data that $[b]\in B(G, \{\mu\})$, rather than the weaker condition $[b]\in A(G, \{\mu\})$. There are several reasons for this. First, the moduli problem leading to $\CM_{\CD_{\BZ_p}}$ can be defined in this more general case, and some of these formal schemes do define  interesting rigid-analytic spaces, cf. the example below.  Secondly, if one takes into account Remark \ref{natdef}, we see that if two integral RZ data $\CD_{\BZ_p}$ and $\CD'_{\BZ_p}$ with non-isomorphic  underlying simple rational RZ data define isogenous  framing objects, then only one of $[b]$ or $[b']$ can lie in $B(G, \{\mu\})$. It would be unnatural to exclude one of these integral RZ data, since both of them lead to the same formal scheme. 

By contrast, in the case when $\CG$ is a parahoric group scheme, it would be reasonable from the point of view of the present paper to exclude the cases where $[b]\in A(G, \{\mu\})\setminus B(G, \{\mu\})$. But even then it might well be that  the resulting formal schemes (which have empty generic fiber) are interesting. 
\end{remark}
\begin{example}\label{exnonB}
 We will describe this example in terms of the `more natural' formulation of the moduli problem, cf. Remark \ref{natdef}.  The notation $F/F_0$ with $F_0=\BQ_p$ is as in Example \ref{exempty}, and the present example will also come in pairs. We consider triples $(X, \iota, \lambda)$ just as in Example \ref{exempty}, except that now we demand that the polarization $\lambda$ is principal. As the first framing object $(\BX, \iota_\BX,  \lambda_\BX)$ we take $\BX=\CE\times\CE$, where $\CE$ denotes the $p$-divisible group of a supersingular elliptic curve over $\BF$, and where $\iota_\BX(a)={\rm diag}(a, a)$ in terms of an embedding of $\CO_F$ into $\End (\CE)$.  The polarization $\lambda_\BX$ is given by the anti-diagonal matrix with both entries equal to $1$ in terms of the natural polarization of $\CE$ which we use to identify $\CE$ with $\CE^\vee$. 

A second choice of framing object arises by  taking the same $(\BX, \iota_\BX)$ as before, but with polarization $\lambda_\BX$  given as ${\rm diag} (u_1, u_2)$, for $u_1, u_2\in \CO_F^\times$ with $-u_1u_2\notin {\rm Nm}_{F/F_0}\,F^\times$. 

We denote by $\CM_1$, resp. $\CM_2$ the formal scheme over $\CO$ that arises from the first, resp. second, choice of framing object. Then it can be shown using the theory of local models \cite{PRS}, Remark~2.35,  that both formal schemes are flat with semi-stable reduction over $\CO$. In fact, 
$$
\CM_1\simeq\coprod_{\CL}\widehat\Omega^2_{\BQ_p}\hat\otimes_{\BZ_p}\breve\BZ_p ,\quad \CM_2\simeq\CM_{\Gamma_0(p)} , 
$$
comp.~\cite{KR-alt}, \S5. Here $\widehat\Omega^2_{\BQ_p}$ denotes the Drinfeld formal scheme of dimension $2$, relative to  the $p$-adic field $\BQ_p$, and $\CL$ is a countable  index set.  Furthermore,  $\CM_{\Gamma_0(p)} $ denotes the $\Gamma_0(p)$ moduli problem. This is the RZ space relative to the simple rational RZ datum of EL type where $B=F=\BQ_p$, and $\dim V=2$, and where $\{\mu\}=\{\mu_0\}$ is the unique non-zero minuscule cocharacter, and $[b]$ is the unique basic element in $B(\GL_2, \{\mu_0\})$. The integral RZ datum is given by a {\it full} periodic lattice chain in $V$. 

In this example, there are two distinct choices of integral RZ data $\CD_{\BZ_p}$ and $\CD'_{\BZ_p}$ that can be used to construct either of these moduli schemes. Since the generic fibers of $\CM_1$ and $\CM_2$ are non-empty, we see, taking into account the previous remark, that even if $[b]\in A(G, \{\mu\})\setminus B(G, \{\mu\})$, the generic fiber of $\CM_{\CD_{\BZ_p}}$ may be non-empty. This is due to the fact that $\CG$ is not parahoric. 

We remark that these two choices of framing objects can be distinguished by their crystalline discriminants, cf. \cite{KR-padic}, \S 5, given by  $-1$ (resp. $+1$) for the first (resp., second) choice.

\end{example}

\subsection{The tower of rigid-analytic spaces}

In this subsection we fix an integral RZ datum $\CD_{\BZ_p}$ such that  $\CG$ is parahoric, and such that $\CM_{\CD_{\BZ_p}}^{\rm rig}\neq\emptyset$. 
Hence also $[b]\in B(G, \{\mu\})$, cf. Proposition \ref{trivpara}. 

\medskip

Let $K\subseteq \CG(\BZ_p)$ be a subgroup of finite index. We then associate to $K$  the rigid-analytic space 
$\BM^K=\BM_{\CD_{\BZ_p}}^K$  classifying $K$-level structures  of $\mathcal{V}$,
\begin{equation}
\CT\simeq\Lambda\, \mod K 
\end{equation}
(see, e.g.,  \cite{Fargues}, Def.~2.3.17 for the precise meaning of this term). Then  $\BM^K$ is a finite \'etale covering of $\CM_{\CD_{\BZ_p}}^{\rm rig}$ and, for $K_0=\CG(\BZ_p)$, we have $\BM^{K_0}=\CM_{\CD_{\BZ_p}}^{\rig}$. One can extend this definition to obtain rigid-analytic spaces $\BM^K$ associated with all open compact subgroups $K\subseteq G(\mathbb{Q}_p)$ (see \cite{RZ}, 5.34).

We obtain in this way a tower of rigid-analytic spaces $\{\BM^K\}$ over $\breve E$. In particular, $\BM^K=\BM^{K'}/(K/K')$,  if $K'$ is a normal subgroup of $K$. It is naturally endowed with an action (from the right) of $G(\mathbb{Q}_p)$  via Hecke correspondences  (see \cite{RZ}, 5.34 for the precise meaning of this term, comp. \cite{Del-Bourb}, Remarque 3.2). Note that this  is an action on the whole tower and not on the individual spaces (an element $g\in G(\mathbb{Q}_p)$ maps $\BM^K$ to $\BM^{g^{-1}Kg}$). 

\begin{properties}\label{rempropmgk}
{\rm The tower of rigid-analytic spaces $\BM_{\CD_{\BZ_p}}^K$ has the following properties.}
\begin{altenumerate}
\item It only depends on the simple rational RZ data $\CD$ underlying the integral data $\CD_{\BZ_p}$, cf. \cite{RZ}, 5.38 and 5.39. We therefore use below  the notation $\BM_{\CD}^K$ for $\BM_{\CD_{\BZ_p}}^K$.

\item The adic space corresponding to $\BM_{\CD}^K$  is smooth and partially proper over $\breve E$, cf. \cite{Fargues}, Lemme~2.3.24; the transition morphisms $\BM_{\CD}^{K'}\to\BM_{\CD}^K$ for $K'\subset K$ are finite and etale. 

\item The action of  $J(\mathbb{Q}_p)$  on $\CM_{\CD_{\BZ_p}}$ induces an action  on $\BM_{\CD}^{K_0}=\CM_{\CD_{\BZ_p}}^{\rig}$. This action lifts to an action (from the left) of $J(\BQ_p)$ on  $\BM_{\CD}^K$, for each $K$, cf. \cite{RZ}, 5.35.  When the center $Z$ of $G$ is identified with a subgroup of $J$, then the left action of an element  $z\in J(\BQ_p)$ with $z\in Z(\BQ_p)$   coincides with the right action of $z\in G(\BQ_p)$, cf. \cite{RZ}, Lemma 5.35.  

\item The  Weil descent datum for $\CM_{\CD_{\BZ_p}}$ over $\mathcal{O}_E$ induces a Weil descent datum on $\BM_{\CD}^{K_0}=\CM_{\CD_{\BZ_p}}^{\rig}$ over $E$ (see \cite{RZ}, 5.43 for the (obvious)  explanation of this term). This lifts to a system of compatible Weil descent data for all $\BM_{\CD}^K$ over $E$, cf. \cite{RZ}, 5.47. 
\end{altenumerate}
\end{properties}

We continue to denote by $\CM$ the formal scheme associated to the integral RZ datum  ${\CD_{\BZ_p}}$.  Denote as above by $\mathcal M^{\rig}$ its generic fiber,  and let $(X,\rho)$ be  the universal $p$-divisible group over $\mathcal M^{\rig}$, 
with additional structure and equipped with the universal quasi-isogeny. Then  $\rho$ induces an isomorphism 
$$V\otimes_{\BQ_p}\CO_{\CM^{\rig}}\cong M(X)\otimes_{\CO_{\CM}}\mathcal O_{\mathcal M^{\rig}},
$$
 where $M(X)$ denotes the Lie algebra of the universal vector extension of $X$. The surjection  $M(X)\to\Lie\, X$ thus yields a filtration on 
 $V\otimes_{\BQ_p}\CO_{\CM^{\rig}}$ which  corresponds to a  morphism $\breve \pi: \mathcal M^{\rig}\rightarrow \breve{\mathcal F}^{\rig} ,$
 which factors through the period domain $\CF^{\rm wa}=\CF(G, b, \{\mu\})^{\rm wa}$ inside $\breve{\mathcal F}^{\rig} $. Here  the period domain is attached to the triple $(G, b, \{\mu\})$ defined by the simple rational RZ datum $\CD$, where we choose the same element $b\in[b]$ as in the definition of the formal scheme $\CM$ and as in the definition of $J$.  This morphism of rigid-analytic spaces over $\breve E$ is  the {\it period morphism} associated to the moduli space $\mathcal M$. The period morphism extends to a compatible system of morphisms 
  \begin{equation}
 \breve \pi^K: \BM^{K}_\CD\rightarrow \breve{\mathcal F}^{\rig},
 \end{equation}
 for varying $K\subset G(\BQ_p)$. 
 
\begin{properties}\label{properiod}
 {\rm We list some properties of the period morphisms.}
\begin{altenumerate}
\item The morphisms $\breve \pi^K$ are \'etale and partially proper for all $K$, cf. \cite{RZ}, Prop. 5.17, \cite{Fargues}, Lemma~2.3.24. The fiber of $\breve \pi^K$ through a point $x\in\BM^K$ may be identified with $G(\BQ_p)/K$, cf. \cite{RZ}, Prop. 5.37.

\item The morphisms $\breve \pi^K$ are $J(\mathbb{Q}_p)$-equivariant. Here $J(\BQ_p)$ acts on $\breve\CF^{\rm rig}$ as described in section \ref{perdom}.

\item The morphisms   $\breve \pi^K$ factor through $\breve\CF^{\rm wa}$. 

\item The morphisms   $\breve \pi^K$  are \emph{not} compatible with the Weil descent data of source and target. Here the source is equipped with the Weil descent datum of Properties \ref{rempropmgk}, (iv); the target is equipped with the Weil descent datum coming from the fact that $\breve\CF$ is defined over $E$. However, let $\Delta$ be the dual abelian group of $X^*(G_{\rm  ab})^\Gamma$. We denote by the same symbol the corresponding discrete rigid-analytic space. Then there is a natural morphism $\kappa^K: \BM^K\to\Delta$ which is equivariant for the actions of $G(\BQ_p)$ and $J(\BQ_p)$, cf. \cite{RZ}, 3.52. There is a natural Weil descent datum on the product $\breve\CF\times\Delta$, cf. \cite{RZ}, 5.43, and the product morphism
$$
(\breve \pi^K, \kappa^K): \BM^K\to \breve\CF^{\rm rig}\times\Delta
$$
is compatible with the Weil descent data on source and target, cf. \cite{RZ}, 5.46. 
\end{altenumerate}
\end{properties}
Let $\CF^{\rm a}$ be the image of $\breve\pi^K$ as an adic space (this is independent of $K$). Then $\CF^{\rm a}\subseteq\CF^{\rm wa}$. This inclusion may be proper, even though both adic spaces have the same sets of points with values in a finite extension of $\breve E$. We refer to papers by Faltings \cite{Falt}, Hartl \cite{Hartl} and Scholze/Weinstein  \cite{Scholzewein} for various descriptions of $\CF^{\rm a}$ (the {\it admissible locus}), cf. also \cite{DOR}, ch. XI, \S4.

So far very little is known about the geometry of the spaces $\BM^K$. However, let us give a conjectural description of their sets of geometrically connected components. Note that these are of particular interest as they are closely related to the highest degree cohomology groups with compact support of the spaces. 

\begin{definition}\label{defhnregular}
A pair $([b],\{\mu\})$ such that $[b]\in B(G,\{\mu\})$ is called HN{\it-reducible} if there exists a proper parabolic subgroup $P$ with Levi factor $L$ (both defined over $F$), a representative $\mu'\in \{\mu\}$ which factors through $L$ and an element $b'\in [b]\cap L(\breve F)$ such that
\begin{altitemize}
\item $[b']_L\in B(L,\{\mu'\}_L)$ where $[b']_L$ is the $L$-$\sigma$-conjugacy class of $b'$ and where $\{\mu'\}_L$ is the $L$-conjugacy class of $\{\mu'\}$.
\item In the action of $\mu'$ and of $\nu_{b'}$ on $\big(\Lie\, R_u(P)\big)\otimes_F\bar F$, only non-negative characters occur. Here $R_u(P)$ denotes the unipotent radical of $P$. 
\end{altitemize}
The pair $([b],\{\mu\})$ is called HN{\it -irreducible} if it is not HN-reducible.
\end{definition}
\begin{remark}
In the special case that $G$ is unramified choose a maximal torus $T$ and Borel $B$ containing $T$, both defined over $F$. In the above definition we may replace $b'$, $\mu'$, $L$, and $P$  by ($\sigma$-)conjugates and thus assume that in addition $P$ is a standard parabolic subgroup, $L$ its Levi factor containing $T$, and that $\mu'\in X_*(T)_{\dom}$ and $\nu_{b'}\in X_*(T)_{\mathbb{Q},\dom}$. Thus, in this case, a pair $([b],\{\mu\})$ such that $[b]\in B(G,\{\mu\})$ is HN-reducible if there exists a proper standard parabolic subgroup $P$ with Levi factor $L$ containing $T$, such that for the dominant representative $\mu'=\mu_{\dom}\in X_*(T)_{\dom}$ and a representative $b'\in[b]\cap L (\breve F)$ such that $\nu_{b'}\in X_*(T)_{\mathbb{Q},\dom}$ we have $[b']_L\in B(L,\{\mu'\}_L)$. We denote this dominant Newton point (which only depends on $[b]$, not on $b'$) by $\nu_{[b]}=\nu_{b'}$. By \cite{CKV}, Theorem 2.5.6 the above condition for HN-reducibility is equivalent to the condition that for some proper standard Levi subgroup $L$ of $G$ we have that $\overline\mu_{\dom}-\nu_{[b]}$ is a non-negative rational linear combination of coroots of $A$ in $L$. Here $A\subset T$ is the maximal split subtorus, and $\overline {\mu}_{\dom}$ denotes the image of $\mu_{\dom}$ in $X_*(A)_{\mathbb{Q}}=X_*(T)_{\mathbb{Q},\Gamma}$.

Let us make this condition more explicit in the case that $G=\GL_n$. We choose the upper triangular matrices as Borel subgroup and let $T$ be the diagonal torus. Then $X_*(T)\cong \mathbb{Z}^{n}$ and $x=(x_i)\in X_*(T)$ is dominant if $x_1\geq\dotsm\geq x_n$. Hence $\nu_{[b]}\in X_*(T)_{\mathbb{Q},\dom}$ is an ordered $n$-tuple of rational numbers, and $\mu_{\dom}=\overline\mu_{\dom}$ an ordered $n$-tuple of integers. We associate with each such tuple $(y_i)\in X_*(T)_{\mathbb{Q},\dom}$ the corresponding polygon which is defined as the graph of the piecewise linear continuous function $[0,n]\rightarrow \mathbb{R}$ mapping $0$ to itself and with slope $y_i$ on $[i-1,i]$. For $\nu_{[b]}$ we obtain the so-called {\it Newton polygon}, for $\mu_{\dom}$ the {\it Hodge polygon}. The condition that $[b]\in B(G,\{\mu\})$ is the equivalent to the condition that the Hodge polygon lies above the Newton polygon and that they have the same endpoint. Each standard Levi subgroup $L$ of $G$ is of the form $\GL_{n_1}\times \dotsm\times \GL_{n_l}$, embedded diagonally into $\GL_n$ and where $(n_1,\dotsc, n_l)$ is a partition of $n$. The pair $([b],\{\mu\})$ is HN-reducible if and only if for some such subgroup (with $l>1$) the two polygons coincide also at $n_1,n_1+n_2,\dotsc$. An easy combinatorial argument shows that this condition is equivalent to the classical condition by Katz \cite{Katz} that the Hodge polygon and the Newton polygon have some break point of the Newton polygon in common, or  coincide and do not have any break point. Similar descriptions are possible for the other classical groups.
\end{remark}

Let $\pi_0(\BM^K)$ denote the set of geometric connected components, i.e.,  the set of connected components of $\BM^K\times_{{\rm Sp}\, \breve E}{{\rm Sp}\, \overline{\breve E}}$. 
\begin{conjecture}\label{conjconncomp}
Assume that $G_{\der}$ is simply connected. Let $D=G_{\rm ab}=G/G_{\der}$ denote the maximal abelian quotient of $G$ and $\delta:G\rightarrow D$ the projection map. Then there is a morphism of towers 
$$(\BM^K)_K\rightarrow (\BM_{D,\delta (\mu)}^{\delta(K)})_K=(D(\mathbb{Q}_p)/\delta(K))_K$$
which is compatible with the group actions on the towers. Here the right hand side is the tower  associated with the torus $D$ as in subsection \ref{secexlsh}, (i) below. Furthermore, if $([b],\{\mu\})$ is HN-irreducible, then this morphism induces bijections 
$$\pi_0(\BM^K)\cong D(\mathbb{Q}_p)/\delta (K)$$
 which are compatible for varying  $K$.
\end{conjecture}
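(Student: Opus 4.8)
\textbf{The morphism of towers.} This comes from functoriality of the construction in the homomorphism $\delta\colon G\to D$. Pushing the universal $p$-divisible group with its additional structure (equivalently, the local system $\CT$ with its level-$K$ structure) forward along the representation attached to $\delta$ produces, on the generic fibres, a compatible system of morphisms $\BM_{\CD}^K\to\BM_{D,\delta(\mu)}^{\delta(K)}$. Since the period domain of the torus $D$ is a point, the tower on the right is, as recalled in \S\ref{secexlsh}\,(i), the discrete rigid space $D(\BQ_p)/\delta(K)$; the resulting map refines the map $\kappa^K$ of Properties~\ref{properiod}\,(iv). Equivariance for the Hecke action of $G(\BQ_p)$ (through $\delta$), for the action of $J=J_b$ (through the induced homomorphism $J_b\to J_{\delta(b)}=D$), and for the Weil descent data all follow from the functoriality of the constructions involved.

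\textbf{Reduction of bijectivity to a connectedness statement.} The morphism $\BM^K\to D(\BQ_p)/\delta(K)$ is equivariant for the left action of $J(\BQ_p)$, acting on the target through $J(\BQ_p)\to D(\BQ_p)$. Since $G_\der$ is simply connected, the derived group of the Levi subgroup $M_{[b]}$ is simply connected, hence so is $J_{b,\der}$, which is an inner form of it; therefore $H^1(\BQ_p,J_{b,\der})=0$ by Kneser's theorem and $J(\BQ_p)\to D(\BQ_p)$ is surjective. Consequently $J(\BQ_p)$ acts transitively on $D(\BQ_p)/\delta(K)$, and since $\BM^K\neq\emptyset$ by hypothesis, the induced map $\pi_0(\BM^K)\to D(\BQ_p)/\delta(K)$ is surjective. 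For its injectivity it is enough to show that the open and closed subspace $\BM^K_1\subseteq\BM^K$ lying over the base point $\delta(K)$ is connected: every other fibre equals $j\cdot\BM^K_1$ for a suitable $j\in J(\BQ_p)$, which acts on $\BM^K$ by an automorphism, hence is connected as well, so each fibre is a single geometric component. Passing to the infinite-level space $\BM^\infty=\varprojlim_K\BM^K$ (available by \cite{Scholzewein} in the cases under consideration), this is equivalent to showing that $D(\BQ_p)=G(\BQ_p)/G_\der(\BQ_p)$ acts simply transitively on $\pi_0(\BM^\infty)$; as the period morphism $\breve\pi^\infty\colon\BM^\infty\to\CF^{\rm a}$ is a pro-\'etale $G(\BQ_p)$-torsor over the geometrically connected admissible locus, this in turn says that the monodromy of this torsor has closure $G_\der(\BQ_p)$ — the inclusion into $G_\der(\BQ_p)=\ker(G(\BQ_p)\to D(\BQ_p))$ being forced by the triviality of the monodromy of the corresponding torsor for $D$.

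\textbf{Connectedness of $\BM^K_1$.} This is where HN-irreducibility is essential and where the real work lies. If $([b],\{\mu\})$ is HN-\emph{reducible} with respect to $(L,P)$, the Hodge--Newton decomposition for Rapoport--Zink spaces (Mantovan, Shen, and others; compare the discussion in \cite{V}) promotes to an isomorphism of the $G$-tower with a tower built from the Levi $L$, which would identify $\pi_0(\BM^K)$ with a set governed by $L/L_\der$, strictly larger than $D(\BQ_p)/\delta(K)$; this shows both that the hypothesis is necessary and that in the irreducible case no such splitting is available. In the HN-irreducible case one transports the statement to the special fibre: the specialization map relates the relevant component set of $\CM^{\rig}$ (and of its finite-level covers) to $\pi_0$ of the affine Deligne--Lusztig variety $X_{\{\mu\}}(b)$, and the theorem of Chen--Kisin--Viehmann on connected components of (closed) affine Deligne--Lusztig varieties \cite{CKV} — which in the Hodge--Newton-indecomposable case identifies this set with a single fibre of the Kottwitz map, i.e.\ states precisely that the $G_\der$-part of the component set is trivial — yields the desired connectedness; the passage to a general reductive $G$ is reduced to the case $G=G_{\rm sc}$ by means of a $z$-extension together with the central isogeny $G_{\rm sc}\to G_\der$.

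\textbf{Main obstacle.} The hard core is the connectedness of $\BM^K_1$, which is essentially equivalent to the deep results on $\pi_0$ of affine Deligne--Lusztig varieties. Two further non-trivial points are: making the Hodge--Newton decomposition precise at the level of towers rather than only of formal schemes; and controlling the comparison between the connected components of the special fibre, of the generic fibre, and of the whole tower — a comparison established in the known cases by the work of Chen and Mantovan, and which in general one expects to handle with the methods of Fargues and Scholze.
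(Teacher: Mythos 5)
This statement is a \emph{conjecture} in the paper; the authors do not prove it, and your task of comparing ``the proposal against the paper's own proof'' therefore has an empty target. What the paper does say, in the two paragraphs following the conjecture, is that the conjecture is known for unramified simple RZ data of EL or PEL type by work of Chen \cite{C}, and it sketches her method: determine $\pi_0(\CM_{\rm red})$ for the reduced special fibre, transfer this to $\pi_0(\CM^{\rm rig})$ via de~Jong's theorem on $\pi_0$ for formal schemes with (formally) normal total space, and then relate the component sets of the reduced special fibre to those of affine Deligne--Lusztig varieties via \cite{CKV}. Your proof strategy is broadly aligned with this route --- you correctly identify the reduction to a connectedness statement, the role of the $J(\BQ_p)$-action, the crucial input from the Chen--Kisin--Viehmann theorem on $\pi_0$ of ADLV, the fact that HN-reducibility obstructs the statement via Hodge--Newton decompositions, and the specialization comparison between special fibre and generic fibre. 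You are also honest that this is a strategy rather than a complete argument, and that the hard core is precisely the ADLV component computation.

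One step in your surjectivity argument is too quick and should be flagged. You write that $H^1(\BQ_p, J_{b,\der})=0$ implies $J(\BQ_p)\to D(\BQ_p)$ is surjective. But the kernel of the algebraic homomorphism $J_b\to D$ is the (inner form of the) Levi-of-$G_\der$ group $J_b\cap G_\der$, not $J_{b,\der}$; between them sits the torus $(M_{[b]}\cap G_\der)/M_{[b],\der}$. Vanishing of $H^1(\BQ_p, J_{b,\der})$ only gives surjectivity onto $J_{b,\rm ab}(\BQ_p)=M_{[b],\rm ab}(\BQ_p)$, and one then still has to check that the cokernel of $M_{[b],\rm ab}(\BQ_p)\to D(\BQ_p)$, controlled by $H^1$ of that intermediate torus, vanishes. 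This is not automatic for a non-split torus and needs a separate argument (Chen handles it, but by a different accounting of rational points and the Kottwitz invariant, not by the line you give). A second soft spot, which you yourself identify, is the passage from the base level $K_0=\CG(\BZ_p)$ --- where the ADLV comparison applies --- to deeper levels $K$: this uses the $G(\BQ_p)$-torsor structure of the tower over the admissible locus in an essential way, and is established in \cite{C} only for the unramified cases.

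In short: your proposal is a reasonable sketch of the strategy behind the known cases of the conjecture, but it is not a proof (as indeed the paper claims none), and the surjectivity of $J(\BQ_p)\to D(\BQ_p)$ as written has a genuine gap that requires handling $H^1$ of a kernel torus, not just of $J_{b,\der}$.
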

For unramified simple RZ spaces of EL type or  PEL type (as in Remark \ref{unramifiedRZ}), this conjecture has been shown by Chen, \cite{C}, compare also \cite{CKV}, section 5. 
\begin{remark}
Let us consider the case when  $G$ is unramified. Chen's method to determine $\pi_0(\BM^{K_0})$ for $K_0=\CG(\mathbb{Z}_p)$ is to determine the set of connected components of the reduced subscheme $\CM_{\rm red}$ underlying the formal scheme $\mathcal{M}$, and then to use the formal smoothness of the moduli space. This is based on de Jong's theorem \cite{J1}, Thm.~7.4.1, that the set of connected components of $\CM^{\rm rig}$ is in bijection with the set of connected components of $\CM_{\rm red}$, provided that $\CM$ is (formally) normal. The sets of connected components of these underlying reduced subschemes can be described in terms of affine Deligne-Lusztig varieties (compare \cite{CKV}, section 5). 

The discussion in \cite{CKV}, Section 2.5,  together with Theorem 1.1 of loc.~cit., shows that,  if $([b],\{\mu\})$ is HN-reducible, then $\pi_0(\BM^{K_0})$ has a description in terms of the  Levi subgroup $L$ of $G$ occurring in Definition \ref{defhnregular}. In particular,  if $([b],\{\mu\})$ is HN-reducible, $\pi_0(\BM^{K_0})$ is never given by the formula in Conjecture \ref{conjconncomp} (for $K=K_0$ the RHS is equal to  $\pi_1(G)_{\Gamma}).$ An extreme example of this observation are moduli spaces of ordinary $p$-divisible groups of given dimension $d$ and codimension $c$: Then $G=\GL_{c+d}$, and the reduced subscheme of the moduli space is discrete and one has a bijection
\begin{equation*}
\pi_0(\BM^{K_0})\cong\GL_c(\mathbb{Q}_p)/\GL_c(\mathbb{Z}_p)\times \GL_d(\mathbb{Q}_p)/\GL_d(\mathbb{Z}_p).
\end{equation*}
 If $G$ is non-split,  there are also examples where the pair $([b],\{\mu\})$ is HN-reducible but where the associated $p$-divisible groups are bi-infinitesimal.
\end{remark}

\section{Local Shimura varieties}\label{locShvar} 

The following definition is the local analogue of a Shimura datum in the sense of Deligne \cite{Del-Bourb}.
\begin{definition} Let $F$ be a finite extension of $\BQ_p$. 
 A {\it local Shimura datum} (over $F$) is a triple  $(G, [b], \{\mu\})$ consisting of the following data. The first entry  is a reductive algebraic group  $G$ over $F$.  The second entry $[b]$ is a $\sigma$-conjugacy class in $B(G)$. The third entry $\{\mu\}$ is a geometric conjugacy class of cocharacters of  $G$ defined over some fixed algebraic closure $\bar F$ of $F$. We make the following two assumptions.
 \begin{altenumerate}
 \item  $\{\mu\}$ is minuscule,
 
 \item $[b]\in B(G,\{\mu\})$.
 \end{altenumerate}
\end{definition}
Associated to a local Shimura datum are the following data.
 \begin{altitemize}
 \item the {\it reflex field} $E=E(G, \{\mu\})$ (the field of definition of $\{\mu\}$ inside a fixed algebraic closure $\bar F$). Let $\breve E$ be the completion of the maximal unramified extension of $E$ in $\bar F$. 
 
 \item the algebraic group $J=J_b$ over $F$, for   $b\in [b]$. 
  
 \smallskip
 
 In the sequel, we fix $b\in [b]$. 
 \end{altitemize}
 \subsection{The conjecture}
 We have seen in the previous section that local Shimura data over $\BQ_p$  arise from  simple rational RZ data of type EL or PEL. In this section we formulate the `conjecture' (in the vague sense) that  to any local Shimura datum is associated a tower of rigid-analytic spaces similar to those explained in the previous section in the context of RZ spaces. 
 
 Indeed, one would like to associate to a local Shimura datum $(G,  [b], \{\mu\})$ over $F$ a tower of rigid-analytic spaces over ${\rm Sp}\, \breve E$, 
 \begin{equation}\label{eqtower}
\BM(G, [b], \{\mu\})= \{\BM^{K}\mid K\subset G(F)\} \, ,
 \end{equation}
 where $K$ ranges over all open compact subgroups of $G(F)$, with the following properties. 
 
 \begin{altenumerate}
 \item each $\BM^{K}$ is equipped with an action of $J(F)$. 
 \item the group $G(F)$ operates on the tower as a group of Hecke correspondences. 
 
  \item the tower is equipped with a Weil descent datum down to $E$. 
  
 \item there exists a compatible system of \'etale and partially proper {\it  period morphism(s)} $\breve\pi^{K}: \BM^{K}\lra \breve{\bf\CF} (G, b, \{\mu\})^{\rm wa}$ which is equivariant for the action of $J(F)$ and which is the first component of a $J(F)\times G(F)$-equivariant morphism of towers of rigid-analytic spaces
 \begin{equation}\label{periodmorph}
 (\breve \pi^{K}, \kappa^{K}): \BM^{K}\to {\bf\CF} (G, b, \{\mu\})^{\rm wa}\times\Delta
\end{equation}
 compatible with Weil descent data,  analogous to Properties \ref{properiod}, (iv).  The fiber of $\breve \pi^{K}$ through a point of $\BM^{K}$ is in bijection with $G(F)/K$, compatibly with changes in $K$.  In fact, there should be an open adic subspace ${\bf\CF} (G, b, \{\mu\})^{\rm a}$ of ${\bf\CF} (G, b, \{\mu\})^{\rm wa}$ stable under the action of $J(F)$ and a $J(F)$-equivariant $G(F)$-torsor such that the tower $\{\BM^K\}$ arises from the system of  level-$K$-structures on this local system. 
 \end{altenumerate}

The  tower would be called the {\it local Shimura variety} associated to the local Shimura datum $(G, [b], \{\mu\})$.  We do not know how to construct  local Shimura varieties in the most general case (nor do we know how to describe the corresponding open subspaces ${\bf\CF} (G, b, \{\mu\})^{\rm a}$ of ${\bf\CF} (G, b, \{\mu\})^{\rm wa}$), but we conjecture their existence, comp. \cite{Rapo-ICM}, Hope 4.2. 
\begin{remark}
We alert the reader to a   conflict of notation between section \ref{rzspaces} and the other sections of this paper. In section \ref{rzspaces},  the notation $F$ appears as the center of the algebra $B$; but the corresponding local Shimura datum is over $\BQ_p$. In the present section, the local Shimura datum is over $F$.

A local Shimura datum $(G, [b], \{\mu\})$ over $F$  defines a local Shimura datum $(G', [b'], \{\mu'\})$ over $\BQ_p$. Indeed, fix an embedding $\iota_0: F\to \bar\BQ_p$. Let $G'={\rm Res}_{F/\BQ_p}\, G$. Then under the Shapiro isomorphism there is an identification  $B(\BQ_p, G')=B(F, G)$, cf. \cite{K}, 1.10. We take $[b']=[b]$ under this identification. Finally, giving a conjugacy class of cocharacters of $G'$ is equivalent to giving a family of conjugacy classes of cocharacters of $G$, one for each embedding $\iota: F\to \bar\BQ_p$. For $\{\mu'\}$ we take the family consisting of $\{\mu\}$ at the fixed embedding $\iota_0$, and the trivial cocharacter at all other embeddings, comp. (vi) of Properties \ref{proplocSh} below. To some extent the case of general $F$ can thus be reduced to the case where $F=\BQ_p$, just as in Deligne's set-up for global Shimura variety one starts with a reductive algebraic group over $\BQ$. However, this approach also has its disadvantages (e.g., Bruhat-Tits theory does not interact so well with restriction of scalars). Since starting with a general field $F$ also has its own aesthetic appeal,   we decided to stick with the present set-up. 
\end{remark}
\begin{properties}\label{proplocSh}
 {\rm We expect the following additional properties of local Shimura varieties. }

\begin{altenumerate}
\item Each member $\BM^{K}$ is a smooth and partially proper rigid-analytic space of dimension $\langle \mu_{\rm dom}, 2\rho\rangle$. Here  $\mu_{\rm dom}$ denotes the dominant representative of $\{\mu\}$ in $X_*(T)$ for a choice of a maximal torus $T$ in $G_{\bar F}$ and a Borel subgroup containing $T$, and  $\rho$ denotes the half-sum of all positive roots of $T$. Furthermore, the transition morphisms $\BM^{K_1}\to\BM^{K_2}$ for $K_1\subset K_2$ are finite and \'etale. 

\item The action of $J(F)$ on $\BM^{K}$ is continuous, when $\BM^{K}$ is considered as an analytic space in the sense of Berkovich, comp. \cite{Fargues}, Cor. 4.4.1. Equivalently, in terms of the adic space $\BM^{K}$, the action of $J(F)$ is continuous on the underlying topological space and,  for any open affinoid open $U$, denoting by $J_U$ the stabilizer of $U$ in $J(F)$, the map
\begin{equation*}
J_U\to \End_{\rm Banach}\big(\Gamma(U, \CO)\big)
\end{equation*} is continuous.

\item There exists an open subset $U$ of the adic space $\BM^{K}$ which, together with all its translates under $J(F)$,  covers the whole space, and which has the following property: there exists a finite covering of $U$ by open subsets $U_i$ for which there exist quasi-compact separated adic spaces $V_i$ and global functions $f_{ij}$ on $V_i$ ($ j=1,\ldots, n_i$) such that $U_i$ is defined inside $V_i$ by 
$$
U_i=\{x\in V_i\mid \vert f_{ij}(x)\vert<1,  j=1,\ldots, n_i\} . 
$$

\item If  $(G_1, [b_1], \{\mu_1\})$, resp. $(G_2, [b_2], \{\mu_2\})$, are local Shimura data, and if $f: G_1\to G_2$ is a compatible homomorphism in the obvious sense, then one should have compatible morphisms of rigid-analytic spaces (compatible with Weil descent data)
\begin{equation*}
\BM(G_1, [b_1], \{\mu_1\})^{K_1}\to \BM(G_2, [b_2], \{\mu_2\})^{K_2}\times_{{\rm Sp}\, \breve E_2} {{\rm Sp}\, \breve E_1},
\end{equation*}
provided that $f(K_1)\subset K_2$.  Furthermore, if $f$ is a closed embedding, then  these morphisms should be closed embeddings  when $K_1=K_2\cap G_1(F)$. 

\item If $(G_1, [b_1], \{\mu_1\})$, resp. $(G_2, [b_2], \{\mu_2\})$, are local Shimura data, let $G=G_1\times G_2$ and complete $G$ to the product local Shimura datum $(G, [b], \{\mu\})$, in the obvious sense. Then the reflex field $E=E(G, [b], \{\mu\})$ is the composite of the reflex fields $E_i=E(G_i, [b_i], \{\mu_i\})$, for $i=1, 2$. For $K$ of the form $K=K_1\times K_2$, and with obvious notation, one should have compatible isomorphisms (compatible with Weil descent data)
\begin{equation*}
\BM^{K}\simeq\big(\BM_1^{K_1}\times_{{\rm Sp}\, \breve E_1} {{\rm Sp}\, \breve E}\big)\times_{{\rm Sp}\, \breve E} \big(\BM_2^{K_2}\times_{{\rm Sp}\, \breve E_2} {{\rm Sp}\, \breve E}\big) .
\end{equation*}
In (iv) and (v), the morphisms, resp. the isomorphisms,  should be compatible with the period morphisms. 

\item Let $F'/F$ be a finite extension, and let $G={\rm Res}_{F'/F} (G')$. Fix an $F$-embedding $F'\to \bar F$, and use this to write 
$$
G\otimes_F\bar F=\prod_{\Gal(\bar F/F)/\Gal (\bar F/F')} G'\otimes_{F'}\bar F . 
$$ Fix a minuscule geometric conjugacy class $\{\mu'\}$ of cocharacters of $G'$, and let $\{\mu\}$ be the minuscule geometric conjugacy class  of cocharacters of $G$ given by $(\{\mu'\}, \mu_0,\ldots,\mu_0)$. Here the first entry corresponds to the fixed embedding of $F'$ into $\bar F$, and $\mu_0$ denotes the trivial cocharacter. Note that there is a canonical identification $B(G', \{\mu'\})=B(G, \{\mu\})$, cf. \cite{K2}, (6.5.3). We complete the pairs $(G', \{\mu'\})$, resp. $(G, \{\mu\})$, into local Shimura data over $F'$, resp. over $F$, by fixing $[b']\in B(G', \{\mu'\})$, resp. $[b]\in B(G, \{\mu\})$ that correspond to each other under this bijection. Note that the local Shimura fields $E(G', \{\mu'\})$ and $E(G, \{\mu\})$ are identical. One should have an isomorphism of towers (compatible with all structures)
\begin{equation*}
\BM(G', [b'], \{\mu'\})^{K'}\simeq \BM(G, [b], \{\mu\})^K ,
\end{equation*}
where $K'=K$ under the identification $G'(F')=G(F)$. 

\end{altenumerate}
\end{properties}
\begin{remark}\label{remunique}
The obvious deficiency in the above conjecture lies in the lack of uniqueness in the definition of a local Shimura variety, i.e., we do not know how to characterize the tower attached to a local Shimura datum. In this regard we make the following comments.

(i) It seems reasonable to expect such a uniqueness statement if we restrict to local Shimura data $(G, [b], \{\mu\})$ which can be embedded in the sense of Properties \ref{proplocSh}, (iv) into a local Shimura datum of the form $(\GL_n, [b'], \{\mu'\})$ for suitable $n$ and $\{\mu'\}$ (for which the RZ construction defines a corresponding local Shimura variety). Such local Shimura data are called of {\it local Hodge type}. This is the local analogue of a {\it Shimura variety  of Hodge type}. In this case, the {\it admissible locus} $\CF(G, b, \{\mu\})^{\rm a}$ in the sense of Hartl or Faltings is defined, cf.~\cite{DOR},  ch. XI, \S4, and the period morphisms $\breve \pi^{K}$ should induce {\it surjective} morphisms 
$$
\breve\pi^{K} : \BM^{K}\to \CF(G, b, \{\mu\})^{\rm a} , 
$$
such that the fiber through a point is equal to $G(F)/K$, and such that the tower $\BM^{K}$ is embedded into the tower for $(\GL_n, [b'], \{\mu'\})$ (base-changed to $\breve E$), in the sense of property (iv) above---and these properties should characterize the local Shimura variety uniquely. 

(ii) It should also be possible to characterize local Shimura varieties as closed subvarieties of a bigger local Shimura variety by its `special points', in analogy with the classical theory, cf.~\cite{Del-Bourb}, 3.15. More precisely, in the situation of (iv) of Properties \ref{proplocSh}, assume that $f:G_1\to G_2$ is a closed embedding. Suppose that $\BM(G_2, [b_2], \{\mu_2\})$ has been defined, in such a way that for any inclusion $(T, [b_T], \{\mu_T\})\hookrightarrow (G_1, [b_1], \{\mu_1\})$, where $T$ is a maximal torus of $G_1$, we have a closed immersion of local Shimura varieties
\begin{equation*}
\BM(T, [b_T], \{\mu_T\})^{K_T}\to\BM(G_2, [b_2], \{\mu_2\})^{K_2}\times_{{\rm Sp}\, \breve E_2}{\rm Sp}\, \breve E_T
\end{equation*}
where $E_T=E(T, \{\mu_T\})$ and where $K_T=K_2\cap T(F)$. Here the local Shimura variety $\BM(T, [b_T], \{\mu_T\})^{K_T}$ is defined in subsection \ref{secexlsh}, (i) below. Let $K_1=K_2\cap G_1(F)$. 
Then $\BM(G_1, [b_1], \{\mu_1\})^{K_1}$ should be characterized as the minimal Zariski-closed subspace 
\begin{equation*}
\BM^{K_1}\subset \BM(G_2, [b_2], \{\mu_2\})^{K_2}\times_{{\rm Sp}\, \breve E_2}{\rm Sp}\, \breve E_1
\end{equation*}
 such that    $\BM(T, [b_T], \{\mu_T\})^{K_T}\subset\BM^{K_1}\times_{{\rm Sp}\, \breve E_1}{\rm Sp}\, \breve E_T$ for any $(T, [b_T], \{\mu_T\})$ as above. 
\end{remark}

\begin{example} As a typical example of a local Shimura datum  $(G, [b], \{\mu\})$ not of local Hodge type, one may take $G={\rm PGL}_n$, and $\{\mu\}$ a non-trivial minuscule cocharacter, and $[b]\in B(G, \{\mu\})$ arbitrary. 

\end{example}
\begin{remark} \label{commondef}
Let us comment on the conditions (i) and (ii) imposed on the triple  defining a local Shimura datum. 

The condition that $\{\mu\}$ be minuscule comes from the fact that the conjecture on the existence of a $\BQ_p$-local system on open adic subspaces of period domains $\CF(G, b, \{\mu\})^{\rm wa}$ in \cite{RZ}, 5.54 becomes false outside the case when $\{\mu\}$ is minuscule, as was pointed out independently by L.~Fargues \cite{Fargues-counter} and by P.~Scholze, comp. \cite{Kedlaya-ICM}. In fact, this hypothesis makes the theory of local Shimura varieties more challenging since it precludes the existence of a tannakian formalism (at least in the naive sense). 

The condition that $[b]$ lie in $B(G, \{\mu\})$ is made for aesthetic reasons, and could be weakened to $[b]\in A(G, \{\mu\})$. Indeed, let us assume this weakened condition, and fix  an unramified  cocycle representative $\eta$ of the cohomology class $c([b], \{\mu\})\in H^1(F, G)$. Let $G'$ be the inner form defined by  the image of $\eta$ in $G_{\rm ad}$. The two groups $G(\breve F)$ and $G'(\breve F)$ can be identified, so that $b$ corresponds to an element $b'\in G'(\breve F)$. Similarly, $\{\mu\}$ corresponds to a geometric conjugacy class of cocharacters $\{\mu'\}$ of $G'$. Then it follows from \eqref{twistcomp} that $[b']\in B(G', \{\mu'\})$. Therefore, the local Shimura variety attached to $(G',  [b'], \{\mu'\})$ may be used to define the local Shimura variety for $(G, [b], \{\mu\})$. Note that the groups $J_b(F)$ and $J_{b'}(F)$, which act on the individual members of the tower, can be identified, but that the group of Hecke correspondences $G'(F)$ acting on the tower is now an inner twist of the original group $G(F)$. This is the point of view adopted in \cite{Rapo-ICM}, \S 4. It has the advantage that it matches the definition of rational RZ data (recall from subsection \ref{RZdata} that we imposed there the weaker condition  $[b]\in A(G, \{\mu\})$). It has the disadvantage that the notation becomes more confusing due to the appearance of the inner form $G'$ of $G$. Since in the end the objects searched for are in effect the same, we opted for the stronger hypothesis $[b]\in B(G, \{\mu\})$.

\end{remark}
\begin{remark}\label{relRZlocsh}
 A simple rational RZ datum $\CD$ with $[b]\in B(G, \{\mu\})$ defines a local Shimura datum over $\BQ_p$, and the  formal schemes $\CM_{\CD_{\BZ_p}}$ give rise to a tower of rigid-analytic spaces as required in \ref{eqtower}, provided that the generic fiber of  $\CM_{\CD_{\BZ_p}}$ is non-empty, comp. Conjecture \ref{conjnonempty}. However, not all properties enumerated above are proved for such towers (when they make sense in this context), although we conjecture them to hold. For instance, property (iii) does not seem to be known in general, and property (iv) is closely related to Conjecture \ref{conjindep}. 

\end{remark}
 In the context of local Shimura varieties, the duality conjecture of \cite{RZ}, 5.54 becomes easy to state (at least with the same imprecision as in loc.~cit., cf.~below). Let $(G, [b], \{\mu\})$ be a local Shimura datum such that $[b]\in B(G, \{\mu\})_{\rm basic}$. Then the {\it dual local Shimura datum} $(G^\vee, [b^\vee], \{\mu^\vee\})$ is defined as follows. Let $b\in [b]$ and set $G^\vee=J_b$. Then there are canonical identifications (of point sets, resp. of algebraic groups), 
 $$
 G^\vee(\breve F)=G(\breve F) ,\quad \quad G^\vee_{\bar F}=G_{\bar F} .
 $$
 Then set $[b^\vee]=[b^{-1}]$ and $\{\mu^\vee\}=\{\mu^{-1}\}$ under these identifications. We note that $[b^\vee]$ is a basic element of $B(G^\vee)$, and that the dual of $(G^\vee, [b^\vee], \{\mu^\vee\})$ is the original datum $(G, [b], \{\mu\})$. The group $G^\vee(F)$ acts on the local Shimura variety $\BM(G, [b], \{\mu\})$ preserving each individual member of the tower; there is a similar action of   $G(F)$ on the local Shimura variety $\BM(G^\vee, [b^\vee], \{\mu^\vee\})$.  On the other hand, the group $G(F)$ acts on the local Shimura variety $\BM(G, [b], \{\mu\})$ as a group of Hecke correspondences of the tower; there is a similar action of $G^\vee(F)$ on the local Shimura variety $\BM(G^\vee, [b^\vee], \{\mu^\vee\})$.
 
  \begin{conjecture}\label{dualityconj}
 There exists an isomorphism `in the limit'
 $$
 \varprojlim\nolimits_K \BM((G, [b], \{\mu\}))^K\simeq  \varprojlim\nolimits_{K^\vee} \BM((G^\vee, [b^\vee], \{\mu^\vee\}))^{K^\vee} ,
 $$
 compatible with the actions of $G^\vee(F)\times G(F)$ on both sides.  
 \end{conjecture}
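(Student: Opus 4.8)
The plan is to identify the two inverse limits with a single moduli space, so that the isomorphism becomes tautological, and then to track the group actions. Granting the conjectural existence and characterization of local Shimura varieties (Remark~\ref{remunique}), the starting point is the expected description of $\varprojlim_K \BM(G,[b],\{\mu\})^K$ in terms of modifications of $G$-bundles on the Fargues--Fontaine curve---established by Scholze and Weinstein \cite{Scholzewein} for $G=\GL_n$ and conjectural in general. Concretely, to a perfectoid space $S$ over $\breve E$ one functorially attaches its relative Fargues--Fontaine curve $X$ together with the distinguished point $\infty$ of $X$, and a point of the inverse limit should be an isomorphism of $G$-bundles
\begin{equation*}
\beta\colon\ \CE_1|_{X\setminus\infty}\ \tilde{\lra}\ \CE_b|_{X\setminus\infty}
\end{equation*}
whose relative position along $\infty$ is of type $\{\mu\}$ (there is no boundary condition here, since $\{\mu\}$ is minuscule), where $\CE_b$ is the $G$-bundle attached to $b$ and $\CE_1$ is the trivial $G$-bundle. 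In this dictionary a trivialization of $\CE_1$ away from $\infty$ records a point of the $G(F)$-level tower over the inverse limit, while $J_b(F)=\underline{\Aut}(\CE_b)(F)$ acts through the target $\CE_b$.

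The duality isomorphism is then $\beta\mapsto\beta^{-1}$, turned into a morphism of towers by means of Fargues' classification of $G$-bundles on $X$. Since $[b]$ is basic, $\CE_b$ is semistable and its automorphism group scheme over $X$ is (the base change of) the inner form $G^\vee=J_b$; consequently the functor $\CF\mapsto\CF\times^{G^\vee}\CE_b$ is an equivalence from $G^\vee$-bundles on $X$ to $G$-bundles on $X$, carrying the trivial $G^\vee$-bundle to $\CE_b$ and---by the very choice $[b^\vee]=[b^{-1}]$---the $G^\vee$-bundle $\CE_{b^\vee}$ attached to $b^\vee$ to the trivial $G$-bundle $\CE_1$. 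Transporting $\beta$ through this equivalence and inverting it, $\beta^{-1}$ becomes an isomorphism $\CE_1|_{X\setminus\infty}\ \tilde{\lra}\ \CE_{b^\vee}|_{X\setminus\infty}$ of $G^\vee$-bundles of type $\{\mu^\vee\}=\{\mu^{-1}\}$, that is, a point of the inverse limit for $(G^\vee,[b^\vee],\{\mu^\vee\})$. Under this correspondence the two kinds of action are exchanged: the action of $J_b(F)=G^\vee(F)$ on $\BM(G,[b],\{\mu\})$, coming from automorphisms of $\CE_b$, becomes the Hecke action of $G^\vee(F)$ on the level structures trivializing $\CE_1$ in the dual tower, while the Hecke action of $G(F)$ on $\BM(G,[b],\{\mu\})$ becomes the action of $G(F)=J_{b^\vee}(F)$ (automorphisms of $\CE_{b^\vee}$) on each member of the dual tower. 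This produces the desired $G^\vee(F)\times G(F)$-equivariant isomorphism ``in the limit''; its compatibility with the Weil descent data comes for free since the whole construction is functorial in $S$, and, if needed, one can pin things down on the dense set of special points as in Remark~\ref{remunique}~(ii).

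The principal obstacle is not the duality map, which is formal once the moduli interpretation is in hand, but the interpretation itself: it requires the theory of the Fargues--Fontaine curve in families together with a classification of $G$-bundles precise enough to identify $\varprojlim_K\BM^K$ with the space of modifications above, and a description of the ``admissible locus'' that cuts out the rigid spaces $\BM^K$ rather than only a diamond. For $G=\GL_n$ this is exactly what \cite{Scholzewein} supplies; the general case requires substantially more. A secondary issue arises when $(G,[b],\{\mu\})$ is not of local Hodge type, in which case the tower has not been constructed and the statement must first be given a meaning, presumably via the embedding/descent formalism of Properties~\ref{proplocSh}~(iv)--(v). Finally, as a consistency check one should verify that for $G=\GL_n$ and $\{\mu\}=(1,0,\ldots,0)$ the construction recovers the Faltings--Fargues isomorphism between the Lubin--Tate and Drinfeld towers.
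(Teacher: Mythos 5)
This statement is a \emph{conjecture} in the paper, not a theorem; the authors offer no proof to compare against. Immediately after the statement they merely remark that the ``inverse limit'' should be given precise meaning via perfectoid spaces (following \cite{Scholze-surv}, \S 5), and that the sought-for perfectoid space $\CM$ over $\hat{\bar E}$ with its $G(F)\times G^\vee(F)$-action would be unique if it exists. So there is nothing in the paper for your argument to deviate from.

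That said, your strategy is sound and is essentially the route the subject subsequently took: reinterpret the inverse limit as a moduli problem of modifications of $G$-bundles on the (relative) Fargues--Fontaine curve, observe that for basic $b$ the automorphism sheaf of $\CE_b$ is (a form of) $J_b=G^\vee$ so that twisting by $\CE_b$ gives an equivalence of bundle categories exchanging $\CE_1\leftrightarrow\CE_b$ and $\CE_{b^\vee}\leftrightarrow\CE_1$, and then the isomorphism is $\beta\mapsto\beta^{-1}$, which visibly swaps the Hecke action with the automorphism action. You also correctly identify the actual difficulty: at the time of the paper, neither the relative curve, nor the classification of $G$-bundles on it, nor the moduli interpretation of $\varprojlim_K\BM^K$, nor even the existence of $\BM^K$ beyond RZ cases, was available. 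A small caveat worth adding to your write-up: the identification of $\underline{\Aut}(\CE_b)$ with the constant group $J_b$ over $X$ holds precisely because $b$ is basic (the bundle is then ``pure''); for non-basic $b$ the automorphism sheaf is non-constant and no such duality can be expected — this is why the conjecture is stated only for $[b]\in B(G,\{\mu\})_{\rm basic}$.
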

 The imprecision here lies in the term `in the limit'; however, as in \cite{Scholzewein}, the theory of perfectoid spaces should enable one to make sense of it. More precisely, following \cite{Scholze-surv}, \S 5, there should exist a perfectoid space $\CM$ over $\hat{\bar E}$ with a continuous action of $G(F)\times G^\vee(F)$ which is equivariantly equivalent in the sense of loc.~cit. to the base change to $\hat{\bar E}$ of both limits occurring in Conjecture \ref{dualityconj}. Note that $\CM$ is unique if it exists. 

\begin{remark}
In the spirit of the uniformization theorem \cite{RZ}, Thm. 6.36, there should be a relation between global Shimura varieties and their local analogues. Let $({\bf G}, \{h\})$ be a global Shimura datum. Hence ${\bf G}$ is a reductive group over $\BQ$, and we denote by $\{\mu_h\}$ the conjugacy class of cocharacters of ${\bf G}$ over $\bar\BQ$ that corresponds to $\{h\}$. The global Shimura datum defines a Shimura variety $\BM({\bf G}, \{h\})=\{\big(\BM({\bf G}, \{h\})^{\bf K}\big)\mid {\bf K}\subset {\bf G}(\BA_f)\}$ defined over  the global Shimura field ${\bf E}={\bf E}({\bf G}, \{\mu_h\})$. Let $G={\bf G}\otimes_{\BQ}\BQ_p$. We fix an embedding $\bar\BQ\to\bar\BQ_p$.  Then we also obtain a conjugacy class of cocharacters $\{\mu\}$ of $G$ over $\bar\BQ_p$. Let  ${\bf p}$ be the  prime ideal of ${\bf E}$ over $p$ defined by the embedding of $\bar \BQ$ into $\bar\BQ_p$. Then $E={\bf E}_{\bf p}$ is the local Shimura field defined by $(G, \{\mu\})$. We take the open compact subgroups $\bf K$ of the form ${\bf K}={\bf K}^pK$, where $K\subset G(\BQ_p)$ is an open compact subgroup. Let 
\begin{equation}
\BM({\bf G}, \{h\})^{\bf K}(\CI)
\end{equation} be the tube over a basic isogeny class $\CI$  of the reduction modulo ${\bf p}$, an admissible open rigid-analytic subset of $\big(\BM({\bf G}, \{h\})^{\bf K}\otimes_{\bf E}\breve{E}\big)^{\rm rig}$.  It is not clear how to make sense of this term in this generality when $\BM({\bf G}, \{h\})^{\bf K}$ does not necessarily represent a moduli problem. We are assuming that the element $[b]\in B(G, \{\mu\})$ defined by $\CI$ is the unique basic element. Then there is an inner form $\bar{\bf G}$ of ${\bf G}$ which is anisotropic modulo center at the archimedean place, is isomorphic to $J_b$ (for $b\in [b]$) at the $p$-adic place and is isomorphic to ${\bf G}$ at all other places, and a system of  isomorphisms compatible with changes in ${\bf K}^p$ and $K$, with the action of ${\bf G}(\BA_f)$ by Hecke correspondences, and with descent data to $E$ on both sides, 
\begin{equation}
\bar{\bf G}(\BQ)\backslash \big(\BM(G, [b], \{\mu\})^K\times{\bf G}(\BA_f^p)/{\bf K}^p\big)\simeq \BM({\bf G}, \{h\})^{\bf K}(\CI). 
\end{equation}

There should be a variant  of this statement for non-basic isogeny classes $\CI$, cf. \cite{RZ}, Thm.~6.36. In this variant, $\bar{\bf G}$ is replaced by a group $I$ over $\BQ$ with $I\otimes_\BQ\BQ_p\simeq J_b$; the tube $\BM({\bf G}, \{ h\})^{\bf K}(\CI)$ is no longer an admissible open rigid-analytic subset of $\big(\BM({\bf G}, \{ h\})^{\bf K}\otimes_{\bf E}\breve E\big)^{\rm rig}$.

\end{remark}
  
\subsection{Examples}\label{secexlsh}Here are some examples of local Shimura varieties that are not RZ spaces.

\smallskip

 (i) The most basic case is the one where $G$ is a torus. Let $G=T$ be a torus over $F$ and $\mu\in X_*(T)=\Hom(\mathbb{G}_m, T_{\bar{F}})$ (in this case, the conjugacy class of $\mu$ consists of a single element).  Note that $B(G,\mu)$ consists of a single element, and that the set $A(G,\mu)$ is in bijection with $(X_*(T)_{\Gamma})_{\tor}$. Let $[b]\in B(G,\mu)$ and fix a representative $b$ of $[b]$. As $G=T$ is commutative, we have $J_b=G$. 

For tori, the local Shimura varieties will be towers of rigid-analytic spaces of dimension 0 and \'etale over $\Sp(\breve E)$. In order to describe them,  we use the following equivalence of categories. We denote by $I_E$ the inertia subgroup. 

Let $\overline x:\Sp(\hat{\overline{\mathbb Q}}_p)\rightarrow \Sp(\breve E)$ be a geometric point. Then, identifying  
$I_E$ with $\pi_1(\Sp(\breve E),\overline x)$, the fiber functor mapping each rigid space $Y$ to $Y_{\overline x}$ induces an equivalence of categories 
\begin{equation*}
\begin{aligned}
\big\{\text{rigid spaces  \'etale over $\Sp(\breve E)$}\big\}&\to \\ \big\{\text{discrete sets with a } &\text{continuous action of $I_E$}\big\} .
\end{aligned}
\end{equation*}

Using this equivalence of categories, we define for an  compact open subgroup $K$ of $T(\mathbb{Q}_p)$ 
\begin{equation}
\BM(T,[b], \mu)^K=T(\mathbb{Q}_p)/K ,
\end{equation}
with the following Galois action, analogous to Deligne's definition of the reciprocity law for special points on Shimura varieties, cf. \cite{Del-Bourb}, (3.9.1).

The cocharacter $\mu$ defines a homomorphism of tori 
$$
N_{\mu}:\Res_{E/F}\mathbb{G}_m\rightarrow T ,
$$
 which is defined as the composition 
 $$
 \Res_{E/F}\mathbb{G}_m\overset{\Res_{E/F}\mu}{\rightarrow}\Res_{E/F}T_E\overset{N_{E/F}}{\rightarrow} T ,
 $$ 
where $N_{E/F}$ denotes the norm map. Let 
$${\rm rec}_{E}:E^{\times}\rightarrow \Gal(\bar E/E)^{{\rm ab}}$$ 
be the Artin reciprocity map normalized in such a way that a uniformizer of $E$ is mapped to an arithmetic Frobenius in $\Gal(\bar{ E}/ E)$. Let 
\begin{equation}
\chi_{\mu}:I_E\rightarrow T(F)
\end{equation}
 be the composite
$$
I_E\rightarrow \mathcal{O}_E^{\times}\overset{N_{\mu}}{\rightarrow} T(F),\quad\gamma\mapsto N_{\mu}({\rm rec}_E^{-1}(\gamma_{|E^{{\rm ab}}})).
$$
 Then the action of $ I_E$ on $T(F)/K$ is given by 
 $$
 \gamma(xK)=\chi_{\mu}(\gamma)xK.
 $$
  For varying $K$, these spaces form a tower of rigid-analytic spaces over $\Sp(\breve E)$. The action of $J(F)\times T(F)$ is given by $(a,b)\cdot xK=abxK$. Note that,  as $T$ is abelian, the action maps each member of the tower  to itself. 
  
  Extending in the obvious way the action of $I_E$ to an action of $\Gal(\bar E/E)$, we see that  the whole tower is  defined over $E$,  so the Weil descent datum is effective in this case.

 Local Shimura varieties associated to tori are defined and used by  Chen \cite{C} in her study of geometrically connected components of RZ-spaces, cf. Conjecture \ref{conjconncomp}.
\smallskip

(ii) Another, in fact rather trivial, way in which local Shimura varieties arise is by taking a product of RZ spaces. Let $\CD_{\BZ_p, 1},\ldots, \CD_{\BZ_p, r}$ be an $r$-tuple of integral RZ data, with associated triples $(G_i, [b_i], \{\mu_i\})$ and associated formal schemes $\CM_i$ over $\Spf\, \CO_{\breve E_i}$. Let $(G, [b], \{\mu\})$ be the product of the $(G_i, [b_i], \{\mu_i\})$ in the obvious sense. Then the reflex field $E=E_{\{\mu\}}$ is the composite of the $E_i=E_{\{\mu_i\}}$, and we obtain a formal scheme $\CM$ over $\Spf\, \CO_{\breve E}$ as the product of the $\CM_i\hat\otimes_{\CO_{\breve E_i}}\CO_{\breve E}$. For a compact open subgroup $K$ of $G(\BQ_p)$ which is a product $K=K_1\times\ldots\times K_r$, we define the finite etale covering of $\CM^{\rm rig}$ as  
\begin{equation*}
\BM^K=(\BM_1^{K_1}\times_{{\rm Sp}\, \breve E_1}{\rm Sp}\, \breve E)\times\ldots\times(\BM_r^{K_r}\times_{{\rm Sp}\, \breve E_r}{\rm Sp}\, \breve E) ,
\end{equation*}
comp. Properties \ref{proplocSh}, (v). 
Since open compact subgroups of product form are cofinal among all open compact subgroups, we may define in the obvious way the tower $\{\BM^K\mid K\subset G(\BQ_p)\}$. It is the local Shimura variety associated to $(G, [b], \{\mu\})$.

\smallskip

(iii) Let  $(G,[b],\{\mu\})$  be a local Shimura datum, and let $H$ be a subgroup of $G$. Assume that $(H, [b]_H, \{\mu\}_H)$ is a local Shimura datum compatible with $(G,[b],\{\mu\})$ in the sense of Properties \ref{proplocSh}, (iii). Assuming that we already know how to define a local Shimura variety for $(G,[b],\{\mu\})$,  it would be desirable to define the local Shimura variety for the subgroup as a subobject.

One instance of this is the situation where $(G,[b],\{\mu\})$ corresponds to an RZ-datum $\CD$, and where $H$ is a Levi subgroup of $G$.
Let $V$ be given by the RZ datum and let 
\begin{equation}\label{dirsum}
V=V_1\oplus V_2\oplus\dotsm\oplus V_{r}
\end{equation}
be the decomposition of $V$ whose stabilizer is $H$. This decomposition is stable under the action of $B$.  

Let us first consider the EL case. Let $H_i$ be the group of $B$-linear automorphisms of $V_i$. Then $H_i$ is a subgroup of $H$, and $H\cong H_1\times\dotsm\times H_{r}$. If $\{\mu\}_H$ corresponds to an $r$-tuple of $H_i$-conjugacy classes $\{\mu_i\}$ of cocharacters, and $[b]_H$ comes from an $r$-tuple of classes $[b_i]$, then $(F, B, V_i, \{\mu_i\}, [b_i])$ are simple rational RZ data, with associated groups $H_i$. As simple integral RZ datum we take an $\CO_B$-stable lattice $\Lambda$ that decomposes as $\Lambda=\bigoplus\nolimits_i\Lambda_i$, where $\Lambda_i=V_i\cap\Lambda$, and obtain in this way also integral RZ data $\CD_{\BZ_p, i}$. If we take the framing object $(\BX, \iota_\BX)$ in product form, we obtain formal schemes 
$\CM_{\CD_{\BZ_p}}$ and $\CM_{\CD_{\BZ_p, i}}$ for $i=1,\ldots, r$. Let $E'=E(H, \{\mu\}_H)$. Then $E'$ is the composite of $E_1, \ldots,E_r$ and $E=E(G, \{\mu\})$ is contained in $ E'$. Let $\CO'=\CO_{\breve E'}$, $\CO=\CO_{\breve E}$ and $\CO_i=\CO_{\breve E_i}$. We obtain an obvious closed immersion of formal schemes 
\begin{equation}\label{closedemb}
(\CM_{\CD_{\BZ_p, 1}}\hat\otimes_{\CO_1}\CO')\times\ldots\times(\CM_{\CD_{\BZ_p, r}}\hat\otimes_{\CO_r}\CO')\to \CM_{\CD_{\BZ_p}}\hat\otimes_{\CO}\CO'.
\end{equation}
In terms of the moduli description of $\CM_{\CD_{\BZ_p}} $ by triples $(X, \iota, \rho)$, this closed formal subscheme is characterized as the subfunctor where the $p$-divisible group $X$, with its additional structure,  decomposes as a product. Thus the generic fiber of the source of (\ref{closedemb}) defines a subtower that can be regarded as the local Shimura variety associated to $(H, [b]_H, \{\mu\}_H)$. Of course, this is just an `internal version'  inside the local Shimura variety for $(G, [b], \{\mu\})$ of the product construction of (ii) above. 

Now consider the PEL case. We can choose the numbering in \eqref{dirsum} in such a way that 
$$
V_i\perp V_j \text{ \it unless $i+j=r+1$} . 
$$
In particular,  $V_{r+1-i}$ is the dual of $V_i$  with respect to the  pairing $(\,,\,)$. 

Let first $r=2r'$ be even.  For $1\leq i\leq r'$ let $H_i$ be the group of $B$-linear automorphisms of $V_i$. Then $$H\cong H_1\times\dotsm\times H_{r'}\times \mathbb{G}_m,$$ where an explicit isomorphism is given by mapping $(g_1,\dotsc,g_{r'},c)$ to the $B$-linear automorphism of $V$ whose restriction to $V_i$ (for every $i\leq r'$) is $g_i$ and which respects the pairing $(\,,\,)$ up to the scalar $c$. 

Now let $r=2r'-1$ be odd. In this case, there is an induced pairing on $V_{r'}$. For $1\leq i< r'$ let $H_i$ be the group of $B$-linear automorphisms of $V_i$, and let $H_{r'}$ be the group of $B$-linear automorphisms of $V_{r'}$ respecting the induced pairing up to a scalar. In this case $$H\cong H_1\times\dotsm\times H_{r'}$$ where an explicit isomorphism is given by mapping $(g_1,\dotsc,g_{r'})$ to the $B$-linear automorphism of $V$ whose restriction to $V_i$ (for every $i\leq r'$) is $g_i$ and which respects the pairing $(\,,\,)$ on $V$ up to the similitude factor $c(g_{r'})$ of $g_{r'}$ on $V_{r'}$.

In both the even and the odd case, we have $r'=\lfloor (r+1)/2\rfloor$. Let $\{\mu\}_H$ correspond to an $r'$-tuple of $H_i$-conjugacy classes $\{\mu_i\}$ of cocharacters, together with a conjugacy class $\{c\}$ of cocharacters of $\mathbb{G}_m$ if $r$ is even. Then the latter conjugacy class is a singleton and, by (\ref{eqcmu}), equal to the identity. Similarly $[b]_H$ comes from an $r'$-tuple of classes $[b_i]$, together with the class $[p]\in B(\mathbb{G}_m)$ if $r$ is even. Then for $i\leq r'$ if $r$ is even, and for $i<r'$ if $r'$ is odd, $(F, B, V_i, \{\mu_i\}, [b_i])$ are simple rational RZ data of EL type, with associated group $H_i$. For $r$ odd, 
$(F, B, V_{r'}, (\,,\,)|_{V_{r'}}, \ast, \{\mu_{r'}\}, [b_{r'}])$ is a simple rational RZ datum of PEL type with associated group $H_{r'}$. If $r$ is even, we choose $\CD_{0}=( \mathbb{Q}_p,  \mathbb{Q}_p, \mathbb{Q}_p, \{id\}, [p])$ as a simple rational RZ datum of EL type for the group $\mathbb{G}_m$. As simple integral RZ datum we take an $\CO_B$-stable lattice $\Lambda$ which is self-dual and which decomposes as $\Lambda=\bigoplus\nolimits_i\Lambda_i$, where $\Lambda_i=V_i\cap\Lambda$, and obtain in this way also integral RZ data $\CD_{\BZ_p, i}$, for $i=1,\ldots,r'$. We take the framing object $(\BX, \iota_\BX, \lambda_{\BX})$ in product form such that $\BX_i$ and $\BX_{r+1-i}$ are identified with the dual of each other by $\lambda_{\BX}$ for each $i$. Then we obtain formal schemes 
$\CM_{\CD_{\BZ_p}}$ and $\CM_{\CD_{\BZ_p, i}}$ for $i=1,\ldots, r'$. The formal scheme for $\CD_0$ is easy, and the corresponding rigid-analytic space  can be described explicitly as a special case of example (i) above, and is a discrete set isomorphic to $\mathbb{Z}$.

Let again $E'=E(H, \{\mu\}_H)$. Then $E'$ is the composite of $E_1, \ldots,E_{r'}$ and $E=E(G, \{\mu\})\subseteq E'$. As before, let $\CO'=\CO_{\breve E'}$, $\CO=\CO_{\breve E}$ and $\CO_i=\CO_{\breve E_i}$. We obtain closed immersions of formal schemes (the first one when $r$ is even, the second one when $r$ is odd), 
\begin{equation*}\label{closedembpel}
\begin{aligned}
(\CM_{\CD_{\BZ_p, 0}}\hat{\otimes}_{\CO_{\breve {\mathbb{Q}}_p}}\CO')\times(\CM_{\CD_{\BZ_p, 1}}\hat\otimes_{\CO_1}\CO')\times\dotsm\times(\CM_{\CD_{\BZ_p, r'}}\hat\otimes_{\CO_{r'}}\CO')\hookrightarrow \CM_{\CD_{\BZ_p}}\hat\otimes_{\CO}\CO'\\
(\CM_{\CD_{\BZ_p, 1}}\hat\otimes_{\CO_1}\CO')\times\dotsm\times(\CM_{\CD_{\BZ_p, r'}}\hat\otimes_{\CO_r'}\CO')\hookrightarrow \CM_{\CD_{\BZ_p}}\hat\otimes_{\CO}\CO'&.
\end{aligned}
\end{equation*}
In terms of the moduli description of $\CM_{\CD_{\BZ_p}}$ by tuples $(X, \iota,\lambda, \rho)$ the  maps are given as follows. When $r$ is even, an $r'+1$-tuple $((X_i,\iota_i,\rho_i))$ for $0\leq i\leq r'$ is mapped to $(X,\iota,\lambda,\rho)$ where $X=X_1\times\dotsm \times X_{r'}\times X_{r'}^{\vee}\times\dotsm\times X_1^{\vee}$, where $\iota$ is the product of the $\iota_i$ and their duals (for $i\geq 1$), where $\rho|_{X_i}=\rho_i$ and where $\rho|_{X_i^{\vee}}$ is the dual of $\rho_i$ up to the factor $p^{c_0}$ where $c_0\in \mathbb{Z}$ is the height of $\rho_0$. The polarization $\lambda$ identifies the dual of the $i$th factor of $X$ with the $2r'+1-i$th factor for each $i$. When $r$ is odd, the map is defined similarly, except for the fact that the similitude factor is determined by the $r'$th component of the left hand side, which contains a polarization. In terms of the tuples $(X, \iota,\lambda, \rho)$, the closed formal subscheme obtained as the image of either of these immersions is characterized as the subfunctor where everything decomposes as a product. Thus the generic fiber of the source of these maps defines a subtower of the local Shimura variety associated with $(G, [b], \{\mu\})$ that can be regarded as the local Shimura variety associated to $(H, [b]_H, \{\mu\}_H)$. As in the EL case this can also be seen as a variant of the product construction of (ii) above.

\smallskip

(iv)  The following construction was pointed out by Fargues. Let $(G, [b], \{\mu\})$ be a local Shimura datum over $F$, and let $c:\BG_{m, F}\to G$ be a central cocharacter defined over $F$. Then we obtain a new local Shimura datum $(G' , [b'], \{\mu'\})$ over $F$ by setting $G'=G$, and $b'=bc(\pi)$  (where $\pi$ is a uniformizer in $F$), and $\mu'=\mu c$. If $\BM$ is a local Shimura variety for  $(G, [b], \{\mu\})$, then one should be able to  construct a local Shimura variety $\BM'$ for  $(G', [b'], \{\mu'\})$ by twisting $\BM$ by the cyclotomic character. For instance, if  $(G, [b], \{\mu\})$ comes from simple integral RZ-data of type EL with $B=F$, and if $c(z)=z^d{\rm Id}_V$, then instead of considering  level structures $\eta: (\BZ/p^k)^n\simeq X[p^k]$ on the universal $p$-divisible group $X$ in the generic fiber, one considers level structures $\eta: (\BZ/p^k)^n(d)\simeq X[p^k]$. The resulting local Shimura variety is not of RZ type for $d\neq 0$.

\smallskip

(v) Let $G$ be an arbitrary reductive group over a local field $F$. Let $\{\mu\}=\mu_0$ be the trivial cocharacter of $G$ and $[b]\in B(G, \mu_0)$. Then $J_b = G$ for $b\in [b]$.
In this case, the local Shimura variety should just be the  discrete set
\begin{equation*}
 \BM^{K} = G (F) / K \, ,
\end{equation*}
with $J(F)=J_b(F)=G(F)$ acting by left translations, and $G(F)$ acting on the tower by right translations. 

\section{$\ell$-adic cohomology of local Shimura varieties}\label{elladiccoho}

In this section we explain what we mean by the $\ell$-adic cohomology of local Shimura varieties.
We fix a prime number $\ell \neq p$.

We fix a local Shimura datum $(G, [b], {\{ \mu \}})$ giving rise to the local Shimura variety 
\begin{equation}
\BM(G, [b], \{\mu\})=\{ \BM^{K} \mid K \subset
G (F) \} .
\end{equation}
 We will use $\ell$-adic cohomology with compact supports, denoted for each $i \in \BN$ by
\begin{equation*}
 H^{i}_c (\BM^{K}) = H^{i}_c (\BM^{K} \times_{\Sp\, \breve{E}} \Sp\, \hat{\bar{E}}, \bar{\BQ}_{\ell} ) \, ,
\end{equation*}
 where $\hat{\bar{E}}$ denotes the completion of an algebraic closure of $E$.
Here we view $\BM^{K}$ either as an adic space or as a Berkovich space, and we use the cohomology theory
developed in either context. Since the theory for Berkovich spaces is better documented, we often use the latter.
The $\bar{\BQ}_{\ell}$-vector space $H^{i}_c (\BM^{K})$ is equipped with an action of $J(F)$, by functoriality. It is also
equipped with an action of the inertia group $I_E = \Gal (\hat{\bar{E}} / \breve{E})$ by functoriality. Due 
to the Weil descent datum from $\breve{E}$ to $E$, the action of $I_E$ extends to an action of the Weil group
$W_E$. Furthermore, $$H^{i}_c (\BM^{K}) = 0 ,\text{ for } i > 2 \dim \BM^{K} ,$$ cf. \cite{Fargues}.
\begin{proposition}\label{cohoprop}

 \noindent(i) For every $K$ and every $i$, the action of $J(F)$ on $H^{i}_c (\BM^{K})$ is smooth and the action
of $I_E$ is continuous, in the sense that $H^{i}_c (\BM^{K})$ is the increasing union of finite-dimensional
continuous representations of $I_E$. Furthermore, $H^{i}_c (\BM^{K})$ is a finitely generated $J (F)$-module.

\smallskip

\noindent (ii) For every admissible representation $\rho$ of $J(F)$, and every $K$ and $i, j$, the 
$\bar{\BQ}_{\ell}$-vector space 
\begin{equation*}
 H^{i, j} (\BM^{K}) [\rho] = {\rm Ext}^j_{J(F)} (H^{i}_c (\BM^{K}), \rho)
\end{equation*}
is finite-dimensional, and vanishes for $j > {\rm rk}_{\rm ss} (J)$. 

{\rm Here the ${\rm Ext}$ is taken in the abelian category of smooth $J(F)$-modules. }

\smallskip

\noindent (iii) For every admissible representation $\rho$ of $J(F)$ and every $i, j$, the module
\begin{equation*}
H^{i,j}((G, [b], {\{ \mu \}})) [\rho] = {\varinjlim}_{{K}} {\rm Ext}^{j}_{J(F)} 
(H^{i}_c (\BM^{K}), \rho)
\end{equation*}
 is an admissible $G(F)$-module and a continuous $W_E$-module.
\end{proposition}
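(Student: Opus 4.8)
The plan is to deduce part (iii) from the finiteness statements (i) and (ii) together with the standard formalism of Hecke correspondences on the cohomology of the tower. Fix $\rho$, $i$ and $j$. For open compact subgroups $K'\subseteq K$ of $G(F)$, the finite étale transition map $f\colon\BM^{K'}\to\BM^{K}$ induces a Gysin (trace) map $f_*$ and a pullback $f^*$ on $H^i_c$, both of which are morphisms of smooth $J(F)$-modules and commute with the $W_E$-action (the transition maps being $G(F)\times J(F)\times W_E$-equivariant, as part of the structure of the tower), with $f_*f^*=\deg f$. Applying $\Ext^j_{J(F)}(-,\rho)$ to the maps $f_*$ gives the transition maps in $H^{i,j}[\rho]=\varinjlim_K\Ext^j_{J(F)}(H^i_c(\BM^{K}),\rho)$, and $\Ext^j_{J(F)}\big(\tfrac{1}{\deg f}f^*,\rho\big)$ provides a splitting of each of them, so these transition maps are split injections. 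Hence $H^{i,j}[\rho]$ is the increasing union of the finite-dimensional spaces $H^{i,j}(\BM^{K})[\rho]$ of part (ii).

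Next I would construct the $G(F)$-module structure. For $g\in G(F)$ the Hecke formalism gives an isomorphism $\BM^{K}\simeq\BM^{gKg^{-1}}$ over $\breve E$, compatible with the $J(F)$- and $W_E$-actions; transport of structure carries $\Ext^j_{J(F)}(H^i_c(\BM^{K}),\rho)$ isomorphically onto $\Ext^j_{J(F)}(H^i_c(\BM^{gKg^{-1}}),\rho)$, and these isomorphisms are compatible with the trace maps, so they assemble into an action of $G(F)$ on the colimit $H^{i,j}[\rho]$. A class represented at level $K$ is fixed by $K$, because for $g\in K$ the isomorphism $\BM^{K}\simeq\BM^{gKg^{-1}}=\BM^{K}$ is the identity; thus the $G(F)$-action is smooth. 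Since the transition maps are split injections respecting the $G(F)$-action, for every open compact $K_1\subseteq G(F)$ the space of $K_1$-invariants of $H^{i,j}[\rho]$ equals $\Ext^j_{J(F)}(H^i_c(\BM^{K_1}),\rho)$, which is finite-dimensional by (ii). Therefore $H^{i,j}[\rho]$ is an admissible $G(F)$-module.

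For continuity of the $W_E$-action it suffices, by the previous paragraph, to show that each $\Ext^j_{J(F)}(H^i_c(\BM^{K}),\rho)$ is a continuous (and, by (ii), finite-dimensional) $W_E$-representation in a way compatible with the transition maps. To this end I would build a $W_E$-equivariant resolution of $H^i_c(\BM^{K})$ by finitely generated projective smooth $J(F)$-modules: choosing an open compact $L_0\subseteq J(F)$ with $H^i_c(\BM^{K})=J(F)\cdot H^i_c(\BM^{K})^{L_0}$ (possible since $H^i_c(\BM^{K})$ is finitely generated over $J(F)$, by (i)), the compact induction ${\rm ind}_{L_0}^{J(F)}\big(H^i_c(\BM^{K})^{L_0}\big)$, with $W_E$ acting through the finite-dimensional continuous representation $H^i_c(\BM^{K})^{L_0}$ and trivially on the induction variable, surjects $W_E$-equivariantly onto $H^i_c(\BM^{K})$ (the evaluation map is $W_E$-equivariant because $W_E$ commutes with $J(F)$). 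Its kernel is again finitely generated (noetherianity of smooth $J(F)$-representations) and $W_E$-stable, so iterating and using that the category of smooth $J(F)$-representations has finite cohomological dimension $\le{\rm rk}_{\rm ss}(J)$ (as already invoked in (ii)) yields a finite such resolution $P_\bullet\to H^i_c(\BM^{K})$. Then $\Ext^j_{J(F)}(H^i_c(\BM^{K}),\rho)=H^j\big(\Hom_{J(F)}(P_\bullet,\rho)\big)$, and by Frobenius reciprocity each $\Hom_{J(F)}(P_k,\rho)$ is a finite direct sum of spaces $\Hom_{\bar\BQ_\ell}(V_k,\rho^{L_k})$ with $V_k$ finite-dimensional continuous; these are finite-dimensional with continuous $W_E$-action since $\rho$ is admissible, so $\Hom_{J(F)}(P_\bullet,\rho)$ is a finite complex of finite-dimensional continuous $W_E$-representations and its cohomology is of the same kind. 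Taking the increasing union over $K$ concludes.

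The main obstacle lies in the second paragraph: making rigorous the interaction of the $G(F)$-Hecke action with the $\Ext$-colimit --- well-definedness of the action, its smoothness, and the identification of the $K_1$-invariants with the level-$K_1$ term --- which is where the content of part (iii) beyond (i) and (ii) resides. By contrast, the $W_E$-continuity is essentially bookkeeping once the projective resolution is arranged to be $W_E$-equivariant, and it rests on the same noetherianity and finite cohomological dimension of $\mathrm{Rep}^\infty(J(F))$ that underlie the finiteness already asserted in (ii).
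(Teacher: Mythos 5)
Your proposal addresses only part (iii); for parts (i) and (ii) the paper relies on citations (primarily to Fargues' Ast\'erisque article), and for part (iii) it simply states that it ``is a consequence of (i) and (ii).'' Your elaboration of that one-line claim follows the natural route --- split injectivity of the transition maps on the $\Ext$-colimit via the projection formula $f_*f^*=\deg f$, identification of the $K_1$-invariants of the colimit with the level-$K_1$ term (which requires semisimplicity of the finite quotient $K_1/K'$ acting on $H^i_c(\BM^{K'})$ commuting with $J(F)$), and $W_E$-continuity via finite projective resolutions in the smooth category --- and this is consistent with the paper's intent and with Mieda's treatment cited in Remarks~\ref{remsoncoh}(ii).

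There is, however, one step that does not go through as written. In the third paragraph you take ${\rm ind}_{L_0}^{J(F)}\bigl(H^i_c(\BM^{K})^{L_0}\bigr)$ ``with $W_E$ acting through the finite-dimensional continuous representation $H^i_c(\BM^{K})^{L_0}$.'' But part (i) only asserts that $H^i_c(\BM^{K})$ is \emph{finitely generated} as a $J(F)$-module, not admissible, so $H^i_c(\BM^{K})^{L_0}$ need not be finite-dimensional (indeed $H^i_c(\BM^K)$ generally has infinite length, cf.\ Remarks~\ref{remsoncoh}(i)); consequently the compactly induced module you write down need not be finitely generated, and the noetherian iteration you invoke is not available. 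The repair is straightforward but needed: choose finitely many $J(F)$-generators, enlarge them to a finite-dimensional $W_E$-stable subspace $V_0$ using the continuity of the $W_E$-action from (i) together with the fact that $W_E$ commutes with $J(F)$, and pick an open compact $L_0$ fixing $V_0$ pointwise. Then $V_0\subseteq H^i_c(\BM^K)^{L_0}$ generates, ${\rm ind}_{L_0}^{J(F)}(V_0)\twoheadrightarrow H^i_c(\BM^K)$ is $W_E$-equivariant, and your resolution argument proceeds using Bernstein noetherianity and finite cohomological dimension as you indicate.
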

\begin{proof}
 (i) The action of the analytic group $J (F)$ on $\BM^{K}$ is continuous, and this implies that 
$H^{i,j} (\BM^{K})$ is a smooth $J(F)$-module since $\BM^{K}$ is smooth and hence quasi-algebraic, cf. \cite{Fargues}, Cor. 4.1.19
(which uses results of Berkovich for the cohomology with torsion coefficients).
The assertion regarding the $I_E$-action is \cite{Fargues}, Cor. 4.1.20.

To prove that $H^{i}_c (\BM^{K})$ is a finitely generated $J (F)$-module, we make use of Properties \ref{proplocSh}, (iii) of local Shimura varieties (recall that this property is not known to hold in all cases of RZ spaces,  see Remark \ref{relRZlocsh}, and also Remarks \ref{remsoncoh}, (i) below.) Let $U$ be an open subset of $\BM^{K}$ as given by that property. Then by \cite{Hub-comp}, Thm.~3.3, (ii), $H^{j}_c (U)$ is finite-dimensional, for all $j$.  Imitating the argument in \cite{Fargues}, proof of Prop. 4.4.13., one constructs a spectral sequence that converges to the cohomology $H^*(\BM^{K})$, with only finitely many terms on each anti-diagonal and such that each term is a finite sum of  representations which are compactly induced from finite-dimensional representations of compact open subgroups of $J(F)$. The result follows.

(ii) This is a general fact about the category of smooth, resp. admissible, representations of $J(F)$,  
cf. \cite{Fargues}, Cor. 4.4.14 and Lemma 4.4.12.

(iii) is a consequence of (i) and (ii).
\end{proof}
\begin{remarks}\label{remsoncoh}
 (i) In the case of a local Shimura variety coming from simple unramified RZ data (cf. Remark \ref{unramifiedRZ}),
Fargues has shown (\cite{Fargues}, Prop. 4.4.13) that $H^i_c (\BM^{K})$ is a $J(F)$-module of finite
type, by establishing Property \ref{proplocSh}, (iii) in this case. His proof uses in an 
essential way the structure of the formal schemes attached to RZ data.  More general cases are due to Mieda \cite{Mieda-ram}, again based on structure results on the formal schemes attached to RZ data. 

Note that  $H^i_c (\BM^{K})$ is not of finite length in general, cf. \cite{Fargues}, Rem. 4.4.18. One cause of this is if the center of $J(F)$ is not compact. However, most of the time, even the cohomology $H^i_c (\BM^{(\delta), K})$ of $\BM^{(\delta), K}=(\kappa^K)^{-1}(\delta)$ is not of finite length over $J(F)^1$ (see \eqref{periodmorph} for the notation $\kappa^K$). Here $\delta\in \Delta$ and $J(F)^1$ is  the intersection of all kernels of $F$-rational characters of $J$. However, this finite length property holds in the Lubin-Tate case, and hence (by using the duality  with the Drinfeld tower) also in the Drinfeld case, comp.~\cite{Dat-ell}, Cor.~3.5.11. 

(ii) In \cite{Mieda-gsp4}, Def.~3.3, Mieda introduces $H^{i,j}((G, [b], {\{ \mu \}})) [\rho]$  by taking the smooth vectors in the
$G(F)$-module
\begin{equation*}
 {\rm Ext}^j_{J(F)} ({\varinjlim}_{{K}} H_c^i(\BM^{K}), \rho) \, .
\end{equation*}
By \cite{Mieda-gsp4}, Remark 3.4, this gives the same result as above.

(iii) It seems likely that the $G(F)$-module $H^{i,j}((G, [b], {\{ \mu \}})) [\rho]$ is of finite length if $\rho$ is of finite length (since this
$G(F)$-module is admissible, it is equivalent to ask whether this $G(F)$-module is finitely generated (Howe's theorem), cf.~\cite{Renard}, VI.6.3). We
refer to \cite{Mieda-gsp4}, Remark 7.10 for ideas in this direction.

(iv) As pointed out in \cite{Fargues}, Rem. 4.4.4., a Frobenius element in $W_E$ will not in general preserve
a finite-dimensional subspace of $H^i_c (\BM^{K})$. This shows that the $W_E$-action does not extend to a
continuous action of $\Gal (\bar{E} / E)$.
\end{remarks}
We denote by ${\rm Groth }(G(F) \times W_E)$ the Grothendieck group of the category of 
$\big(G(F) \times W_E\big)$-modules over $\bar{\BQ}_{\ell}$ which are admissible as $G(F)$-modules and continuous as $W_E$-modules. Our
object of interest will be, for a fixed irreducible representation $\rho$ of $J(F)$,  the alternating sum, modified by a Galois twist
\begin{equation}
 H^{\bullet} ((G, [b], {\{ \mu \}})) [\rho]= \sum\nolimits_{i, j \geq 0} (-1)^{i + j} H^{i,j}((G, [b], {\{ \mu \}})) [\rho] (-\dim \BM) ,
\end{equation}
considered as an element of $ {\rm Groth} (G(F) \times W_E)$. This makes sense since only finitely many terms in this sum are non-zero.

\section{Realization of supercuspidal representations}\label{seckottwitzconj}
In this section we state the conjecture of Kottwitz \cite{Rapo-ICM}, Conj. 5.1,  in the context of local
Shimura varieties and adapted to the set-up of the last section. We fix a local Shimura datum $(G, [b],\{ \mu \})$
giving rise to a local Shimura variety, where we assume that $[b]$ is \emph{basic}.

\subsection{The Kottwitz conjecture} To state the conjecture, we will have to make use of some aspects of a conjectural Langlands correspondence.
A Langlands parameter (cf.~\cite{GGP}, \S 8) for a group $G$ over $F$,
\begin{equation*}
 \varphi : W_F \lra ^LG = \hat{G} \rtimes W_F \, ,
\end{equation*}
is called \emph{discrete}, if the connected component $S^{\circ}_{\varphi}$ of the centralizer group 
$S_{\varphi} = \rm{Cent}_{\hat{G}} (\varphi)$ lies in $Z(\hat{G})^{\Gamma}$.  
\begin{remark}
We point out that we are using the Weil group $W_F$, and not the Deligne-Weil group, i.e., there is no $\SL_2$-factor present here. In \cite{Kottwitztemp}, such Langlands parameters are called {\it elliptic}. By \cite{Kottwitztemp}, Lemma 10.3.1, a Langlands parameter is discrete if and only if the image of $\varphi$ is not contained in any proper Levi subgroup  of $^LG$. Note that if $G = \GL_n$, then $L$-packets are singletons, and a representation $\pi$ corresponds to a discrete Langlands parameter if and only if $\pi$ is supercuspidal. 
If $G={\rm U}_3$ or $G = {\rm GSp}_4$, in which cases a Langlands correspondence has been constructed, the discrete $L$-packets  are exactly those that consist entirely of supercuspidal representations ( for the case of ${\rm U}_3$, cf.~\cite{Fargues}, App.~C.; for the case of ${\rm GSp}_4$, cf.~\cite{Mieda-gsp4}, Thm. 7.3 for one direction\footnote{According to W.~T.~Gan (e-mail to the authors), both directions follow easily from the construction in \cite{Gan-Takeda}. }).
\end{remark}
 To state the conjecture, we make the
assumption that $G$ is a {\it $B$-inner twist} of the quasi-split form $G^*$ of $G$, i.~e., there exists a basic
element $b^* \in G^* (\breve{F})$ such that $G$ is isomorphic to the inner twist $J^*_{b^*}$ of $G^*$
defined by $b^*$, cf.~section \ref{sigmaconjclas}. 
\begin{remark}
Every reductive group $G$ can be written in this way, provided that the center of $G$ is connected. It would be interesting to find a formulation of the Kottwitz conjecture to cover the most general case.  For this,  one might have to employ Kaletha's  extended cohomology groups $H^1(u\to W_F, Z\to G)$, cf.~\cite{Kal-rig}.
\end{remark}
 Under the canonical identifications
\begin{equation*}
 \pi_1 (G)_{\Gamma} = X^* \big(Z(\hat{G})^{\Gamma}\big),\quad \pi_1 (G^*)_{\Gamma} = X^*\big(Z(\hat{G})^{\Gamma}\big)\, ,
\end{equation*}
we obtain elements of $X^* \big(Z(\hat{G})^{\Gamma}\big)$,
\begin{equation*}
 \lambda_b = \kappa_G ([b]), \quad \lambda_{b^*} = \kappa_{G^*} ([b^*])\, .
\end{equation*}
According to the refined Langlands correspondence (cf. \cite{Kal-rig, Vogan, GGP}), there should be identifications $\pi\mapsto \tau_\pi$, resp. $\rho\mapsto \tau_\rho$, 
of the $L$-packets for $G$, resp. for $J$, 
\begin{equation*}
\begin{aligned}
 \Pi_{\varphi} (G) & = \{ \text{irreducible  algebraic rep'ns $\tau$  of
$S_{\varphi}$} \mid \tau \vert Z (\hat{G})^{\Gamma} = \lambda_{b^*} \}\\
 \Pi_{\varphi} (J) & = \{ \text{irreducible  algebraic rep'ns $\tau$  of
$S_{\varphi}$} \mid \tau \vert Z (\hat{G})^{\Gamma} = \lambda_{b^*}+\lambda_b \} 
\end{aligned}
\end{equation*}
where the equalities are between elements of $X^*(Z(G)^\Gamma)$. Of course, we do not know how to characterize these bijections. However, in the cases  where these bijections are known, they can be pinned down by imposing endoscopic character relations with respect to a choice of transfer factors that depend on the choice $\frak w=(B, \psi)$ of Whittaker data, cf. \cite{Kal-gen}, \S 1.  In particular,  $\tau_\pi$ is the trivial one-dimensional representation of $S_\varphi$ if and only if $\pi$ is the $(B, \psi)$-generic representation of $G^*(F)$. Any other choice $\frak w'$ of Whittaker datum differs from $\frak w$ by a one-dimensional character $\lambda=\lambda(\frak w, \frak w')$ of $S_\varphi$ that is trivial when restricted to $Z (\hat{G})^{\Gamma}$, cf. \cite{Kal-gen}, \S 1. When $\frak w$ is replaced by $\frak w'$, then $\tau_{\pi}$ and $\tau_\rho$ are replaced by $\tau_{\pi}\otimes\lambda$ and $\tau_\rho\otimes \lambda$. Hence the function
\begin{equation*}
\begin{aligned}
 \Pi_{\varphi} (G) \times \Pi_{\varphi} (J) & \lra {\rm{Rep}} (S_{\varphi})\\
(\pi \, , \rho) & \longmapsto \check{\tau}_{\pi} \otimes \tau_{\rho}
\end{aligned}
\end{equation*}
is well-defined, i.e., independent of the choice of Whittaker data. Here $\check{\tau}_{\pi}$ denotes the contragredient representation of 
$\tau_{\pi}$.  We refer to \cite{Kal-gen}, Thm. 4.3  for cases in which this general idea is realized in specific cases (related  to symplectic and special orthogonal groups). 

To state the conjecture, we recall the finite-dimensional representation $r_{\{\mu\}}$ of $\hat{G} \rtimes W_E$
defined by $\{\mu\}$, cf. \cite{Kottwitz0}, Lemma~2.1.2, which we regard here as a representation in a $\bar{\BQ}_{\ell}$-vector space.
\begin{conjecture}[Kottwitz conjecture]\label{kottwitzconj}
 Under the assumptions on $(G, [b], \{ \mu \})$ above, let $\varphi$ be a discrete
Langlands parameter for $G$. Denoting by $\varphi_E$ the restriction of $\varphi$ to $W_E$, regard
$r_{\{ \mu \}} \circ \varphi_E$ as a representation of $S_{\varphi} \times W_E$, via
\begin{equation*}
 \big(r_{\{ \mu \}} \circ \varphi_E \big)(s, w) = r_{\{ \mu \}} (s \cdot \varphi_E (w) ) \, .
\end{equation*}
Then, for $\rho \in \Pi_{\varphi} (J)$,
\begin{equation*}
 H^{\bullet} ((G, [b], \{ \mu \})) [\rho] = \sum_{\pi \in \Pi_{\varphi}(G)} {\pi} \boxtimes
\Hom_{S_{\varphi}} (\check{\tau}_{\pi} \otimes \tau_{\rho} , r_{\{ \mu \}} 
\circ \varphi_E) (-\frac{d}{2}) \, .
\end{equation*}
\end{conjecture}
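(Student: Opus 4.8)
The plan is to play the cohomology of the local Shimura variety off against that of a suitable global Shimura variety, in the cases where the requisite inputs are available. First I would choose a global PEL (or abelian-type) Shimura datum $(\mathbf G,\{h\})$ with $\mathbf G\otimes_\BQ\BQ_p\simeq G$, with $\{\mu_h\}$ inducing $\{\mu\}$ at the place above $p$ cut out by a fixed embedding $\bar\BQ\incl\bar\BQ_p$, with $\mathbf G$ anisotropic modulo center over $\BR$, and with the tame ramification at auxiliary split places arranged so that the global parameter we want to track is forced to be ``nice'' there; such data can be manufactured as in Harris--Taylor or Scholze at the cost of auxiliary ramification away from $p$ and $\infty$. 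The stabilized Langlands--Kottwitz--Scholze method then computes $\sum_i(-1)^iH^i_c(\mathrm{Sh}_{\mathbf K})$, after projecting to a $\mathbf K^p$-isotypic piece, as a sum over automorphic representations $\Pi$ of $\mathbf G(\BA)$ in which the $G(\BQ_p)$-action is by $\Pi_p$ and the $W_E$-action is through $r_{\{\mu\}}\circ\varphi_{\Pi_p,E}$, up to the Tate twist in the statement.

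Next I would invoke Mantovan's formula, which expresses $H^\bullet_c(\mathrm{Sh})$ in terms of the cohomology of Igusa varieties and of the Rapoport--Zink towers over the Newton strata, together with the Rapoport--Zink uniformization theorem \cite{RZ}, Ch.~6, which identifies the basic stratum with a quotient built from the local Shimura variety $\BM(G,[b],\{\mu\})$ and automorphic forms on the inner form $\bar{\mathbf G}$ with $\bar{\mathbf G}\otimes_\BQ\BQ_p\simeq J_b$. Cutting out by the supercuspidal representation $\rho$ of $J(\BQ_p)=J_b(\BQ_p)$ leaves only the basic contribution, once one knows that the Rapoport--Zink cohomology over the non-basic strata is parabolically induced and hence has no supercuspidal constituents --- this is exactly the Harris conjecture, Section~\ref{harrisconj}. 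Feeding $\rho$ through uniformization and the global multiplicity formula, and stabilizing the resulting identity of traces, one matches the $J_b$-isotypic component of the cohomology with $\sum_{\pi\in\Pi_\varphi(G)}\pi\boxtimes\Hom_{S_\varphi}(\check\tau_\pi\otimes\tau_\rho,r_{\{\mu\}}\circ\varphi_E)$; the appearance of the $\Hom_{S_\varphi}$-multiplicity spaces, and the independence of the choice of Whittaker datum, is precisely the content of the endoscopic character identities that pin down the refined local Langlands correspondence, so this step is forced once those identities are known.

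The hard part --- and the reason this remains a conjecture --- is that every input just used is available only for restricted classes of groups: the stabilized Langlands--Kottwitz--Scholze description with the correct $r_{\{\mu\}}$-factor, the refined local Langlands correspondence for $G$ and $J$ with its endoscopic character relations, and Harris's conjecture to discard the non-basic strata. For general $G$ one does not even have the local Langlands correspondence, and for a discrete but non-supercuspidal $\varphi$ the packets $\Pi_\varphi(G)$ and $\Pi_\varphi(J)$ and the isolation of the basic contribution become substantially more delicate. A further subtlety is that the statement concerns the virtual representation $\sum(-1)^{i+j}H^{i,j}$, and the possible failure of purity and finite length (Remarks~\ref{remsoncoh}) forces care in passing between that alternating sum and the geometric cohomology groups. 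What would make a uniform proof conceivable is to replace the global input altogether by an intrinsic one: the geometrization of local Langlands on $\mathrm{Bun}_G$, in which $H^\bullet(\BM(G,[b],\{\mu\}))[\rho]$ is computed by applying the Hecke operator attached to $\{\mu\}$ to the sheaf on $\mathrm{Bun}_G$ associated with $\rho$ and supported at $[b]$, and the Kottwitz formula becomes the Hecke-eigensheaf property of that sheaf --- a line of attack not available when this survey was written.
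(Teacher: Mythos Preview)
The statement is a \emph{conjecture}: the paper does not prove it, but rather states it, discusses its origins and known special cases (subsection ``Known results''), and lists partial results due to Harris--Taylor, Shin, Fargues, Strauch, and Mieda. So there is no ``paper's own proof'' to compare against, and your text is correctly framed as a strategy rather than a proof. That strategy --- globalize, compute the Shimura variety cohomology by Langlands--Kottwitz, compare with the Newton stratification via Mantovan's formula and Rapoport--Zink uniformization, and then stabilize --- is indeed the route taken in all the known cases the paper cites. Your concluding paragraph correctly identifies the missing inputs (refined local Langlands with endoscopic character identities, the HV-conjecture, the stabilized trace formula for general $G$) that keep this a conjecture.

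One point deserves tightening. You write that ``cutting out by the supercuspidal representation $\rho$ of $J(\BQ_p)$ leaves only the basic contribution, once one knows that the Rapoport--Zink cohomology over the non-basic strata is parabolically induced''. But the non-basic strata involve groups $J_{b'}$ for $[b']\neq[b]$, which are inner forms of \emph{different} Levi subgroups of $G$; the representation $\rho$ of $J_b(\BQ_p)$ has no a priori meaning there, so one cannot literally ``cut out by $\rho$'' across all strata. In practice the isolation of the basic stratum goes the other way around: one projects on the $G(\BQ_p)$-side to a supercuspidal $\pi$ (or to a discrete packet), and it is Corollary~\ref{corhv} of the HV-conjecture that kills the non-basic contributions. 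The representation $\rho$ then enters through the uniformization of the basic locus by automorphic forms on the inner form $\bar{\mathbf G}$ with $\bar{\mathbf G}\otimes\BQ_p\simeq J_b$, exactly as you say in the following sentence. This is more than a cosmetic rewording: in Shin's argument (and in Fargues's), the interplay between the $G$-side and the $J$-side is mediated by the stable trace formula and transfer of functions, not by a direct $\rho$-isotypic projection on Mantovan's formula.

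Your final remark about replacing the global input by the Fargues--Scholze geometrization on $\mathrm{Bun}_G$ is of course anachronistic relative to the paper (which predates that work), but it is the correct modern perspective, and realizing the Kottwitz formula as a Hecke-eigensheaf property is precisely how the conjecture is now approached.
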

Here the second factor on the RHS is a $W_E$-module in the obvious way, that has been twisted by minus half the
dimension of $\BM$.

\begin{remarks}\label{remkottwitzconj}
 (i) Since we are assuming in this conjecture that $\varphi$ is a  discrete Langlands parameter, the LHS should
simplify since one can expect in this case that for every $i$
\begin{equation*}
 {\rm Ext}^j_{J (F)} ( H^i_c (\BM^{K'}), \rho ) = 0 , \, j > 0 \, .
\end{equation*}
This indeed holds if $\rho$ is supercuspidal, cf. \cite{Renard}, VI.3.6.

\smallskip

 (ii) In many cases one has
\begin{equation*}
 \Hom_{J(F)} (H^i_c (\BM^{K}), \rho ) = 0 ,  \,  i \neq d \, ,
\end{equation*}
e.~g.~\cite{Dat-Dr}, although this probably cannot be expected in general. But this vanishing
property dictates the sign factor on the RHS.

\smallskip

(iii) We note that if $G$ is quasisplit,  the formula above simplifies.
Indeed, in this case $G = G^*$ and $\lambda_{b^*}  = 0$.

\smallskip

(iv) A weakened version of Conjecture \ref{kottwitzconj} would be to postulate the above formula after taking the isotypic part of the LHS corresponding to representations $\pi$ which lie in some (unspecified) discrete $L$-packet. 

\smallskip

(v) A more symmetric form  of Conjecture \ref{kottwitzconj}, closer to the original version in \cite{Rapo-ICM}, Conj. 5.1\footnote{Note that in loc.~cit. the contragredient sign for $\pi$ should be eliminated. This error has been also copied into \cite{H}, Conj.~5.3.}, and which relates to the duality conjecture \ref{dualityconj}, is due to Scholze \cite{Scholze-surv}, \S 5. Let $C=\hat{\bar E}$. As in Conjecture \ref{dualityconj}, we use the notation $(G, [b], \{\mu\})$ and $(G^\vee, [b^\vee], \{\mu^\vee\})$, and we assume the existence of the perfectoid space $\CM$ over $C$, as in the remarks following Conjecture \ref{dualityconj}.  Then by \cite{Scholze-surv}, Prop.~5.4. (iii), there is an isomorphism of {\it smooth} $G(F)\times G^\vee (F)$-modules,
\begin{equation}\label{isolim}
{\varinjlim}_{{K}} H_c^i(\BM^{K})\simeq {\varinjlim}_{{K^\vee}} H_c^i((\BM^\vee)^{K^\vee}) .
\end{equation}
Let $\pi$ be an irreducible $G(F)$-module, and $\rho$ an irreducible $G^\vee (F)$-module. Let  $ H^{\bullet} ((G, [b], \{ \mu \})) [\pi\times \rho]$ be the $\pi\times \rho$-isotypical component of the alternating sum of the modules in \eqref{isolim} (defined in terms of Ext-groups of smooth $G(F)\times G^\vee (F)$-modules, analogous to Proposition \ref{cohoprop}, (iii)). Let us assume that this $W_E$-module is non-zero. Then the conjecture is that $\pi$ lies in a discrete $L$-packet if and only if $\rho$ does; and that in this case the $L$-parameters should be identical (up to equivalence); and that, in this case, denoting by $\varphi$ the corresponding $L$-parameter, 
\begin{equation*}
 H^{\bullet} ((G, [b], \{ \mu \})) [\pi\times \rho]\simeq \Hom_{S_{\varphi}} (\check{\tau}_{\pi} \otimes \tau_{\rho} , r_{\{ \mu \}}  \circ \varphi_E) (-\frac{d}{2}) .
\end{equation*}
Note that the RHS of this relation is symmetric in $\pi$ and $\rho$.

\end{remarks}
\begin{remark}
 It would be interesting to extend this conjecture to include non-discrete Langlands parameters. In the
Drinfeld case and the Lubin-Tate case this has been done by Dat \cite{Dat-ell}. However, more general cases
seem too speculative at the moment (they seem to involve {\it discrete Arthur parameters}). 
\end{remark}
\subsection{Known results} We review here some results concerning this conjecture.

\smallskip

\noindent a) As a quick check that the conjecture is reasonable, let us consider the case where ${\{\mu\}}= \mu_0$ is the
trivial cocharacter. Then, taking $b=1$ as representative of $[b]$, we have  $J_b = G$. 
In this case, the local Shimura variety is the discrete set
\begin{equation*}
 \BM^{K} = G (F) / K \, ,
\end{equation*}
with $J(F)=J_b(F)=G(F)$ acting by left translations, and $G(F)$ acting on the tower by right translations, cf. Examples \ref{secexlsh}, (v).
One checks easily that
\begin{equation}\label{kottwtriv}
 H^\bullet ((G, [b], {\{\mu\}}) [\rho] = {\Hom}_{J(F)} (C^{\infty}_c (G(F)), \rho ) .
\end{equation}
Now using the fact that the $L$-parameter of $\rho$ is discrete, it should follow that the RHS of
\eqref{kottwtriv} is equal to a multiple of $\rho$. If $\rho$ is a supercuspidal representation, it is in fact equal to $\rho$. Here our guide is the Peter-Weyl formula, valid when $G(F)$ is compact,
\begin{equation*}
 C^{\infty} (G(F)) = \bigoplus_{\rho \in \hat{G}(F)} \rho \otimes \check{\rho} \, .
\end{equation*}
Hence, assuming that the multiplicities check, we see that the conjecture is reasonable in this case.

\smallskip

\noindent b) The next  batch of results concerns cases when $G = {\rm Res}_{F/\BQ_p} (\GL_n)$. Note that in this case
Langlands parameters exist, and each $L$-packet $\Pi_{\varphi} (G)$ consists of a single element. Something
similar is true of $J_b$. Through the functoriality  (vi) of Properties \ref{proplocSh}, this can be reduced to the case $G=\GL_n/F$. 
\begin{theorem}[Harris/Taylor \cite{HT}, Thm. VII.1.3] Let $G = \GL_n/F$ and 
$${\{\mu\}} = (1,0, \ldots, 0)\in (\BZ^n)_{\geq }.
$$
 Then the Kottwitz conjecture is true for the triple $(G, [b], {\{\mu\}})$, where $[b] \in 
B(G,  {\{\mu\}})$ is the unique basic element.
\end{theorem}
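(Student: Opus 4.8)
The plan is to follow the global method of Harris and Taylor \cite{HT}: one realizes $\rho$ (equivalently, the corresponding supercuspidal of $\GL_n(F)$) in the cohomology of a suitable proper Shimura variety, and then compares the automorphic description of that cohomology with its description via the Rapoport--Zink uniformization of the basic locus. First I would unwind the two sides of Conjecture \ref{kottwitzconj} in this case. The unique basic $[b]\in B(\GL_n,\{\mu\})$ has Newton slope $1/n$, so $J=J_b$ satisfies $J(F)=D^\times$ for $D$ the central division algebra over $F$ of invariant $1/n$; moreover $E=F$, $d=n-1$, and $r_{\{\mu\}}$ is the standard $n$-dimensional representation of $\hat{G}=\GL_n$, so the right-hand side equals $\pi\boxtimes\big(\mathrm{rec}(\pi)(-\tfrac{n-1}{2})\big)$, where $\pi=\mathrm{JL}(\rho)$ and $\mathrm{rec}$ denotes local Langlands for $\GL_n$. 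The task is thus to compute the $\rho$-isotypic alternating sum of $H^\bullet_c$ of the Lubin--Tate tower and match it with $\pi\boxtimes\big(\mathrm{rec}(\pi)(-\tfrac{n-1}{2})\big)$.

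Next I would globalize. Choose a CM field, a division algebra over it, and a unitary similitude group ${\bf G}/\BQ$ of the type used by Kottwitz and Harris--Taylor, so that ${\bf G}$ gives rise to a proper smooth Shimura variety with good reduction at $p$, such that ${\bf G}(\BQ_p)$ carries a $\GL_n(F)$-factor, and such that $\pi$ occurs at $p$ in some automorphic representation $\Pi$ of ${\bf G}(\BA)$; existence of such a $\Pi$ follows from simple forms of the trace formula together with the classification of automorphic representations of unitary groups and base change to $\GL_n$. One then computes the $\Pi$-part of $\bigoplus_i(-1)^iH^i_c$ of this Shimura variety in two ways. On the automorphic side, the Langlands--Kottwitz method (counting points mod $p$ by twisted orbital integrals, stabilization, comparison with the stable trace formula and base change) identifies the $\Pi$-isotypic part with $\Pi^{\infty,p}\otimes(\text{explicit local factor at }p)\otimes R_\Pi$, where $R_\Pi|_{W_F}$ is $\mathrm{rec}(\pi)$ up to the expected Tate twist; this is where local Langlands for $\GL_n$ is simultaneously constructed, by induction on $n$ and the already-known cases. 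On the geometric side, the Rapoport--Zink uniformization theorem (\cite{RZ}, Thm.~6.36) describes the formal completion of the integral model along its basic (supersingular) locus as a quotient ${\bf I}(\BQ)\backslash\big(\CM^K\times{\bf G}(\BA_f^p)/K^p\big)$ with $\CM^K$ the Lubin--Tate tower and ${\bf I}(\BQ_p)=D^\times$, so that the contribution of the basic locus to $H^\bullet_c$ of the Shimura variety is built from $H^\bullet_c$ of the Lubin--Tate tower, with $D^\times$ acting through $J(F)$.

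The key point, and the main obstacle, is to isolate the Lubin--Tate cohomology from the rest: one must show that for supercuspidal $\pi$ (equivalently, for $\rho$ with discrete parameter) the $\pi$-isotypic part of $H^\bullet_c$ of the Shimura variety is entirely supported on the basic locus and concentrated in the middle degree $d=n-1$. Supercuspidal representations of $\GL_n(F)$ are not parabolically induced, hence cannot occur in the cohomology of the non-basic Newton strata, whose contributions are of parabolically-induced shape — this is proven via the analysis of the Newton stratification and Igusa varieties in \cite{HT} (and is clarified by Mantovan's formula in later treatments); the degree concentration then follows from Deligne's purity together with the properness of the Shimura variety. Granting this, one equates the two computations: the $\pi$-isotypic part of $\bigoplus_i(-1)^iH^i_c$ equals, on the one hand, $\Pi^{\infty,p}\otimes(\pi_p\text{-factor})\otimes\mathrm{rec}(\pi)(\text{twist})$, and on the other hand $\Pi^{\infty,p}\otimes H^\bullet_c((G,[b],\{\mu\}))[\rho]$ up to the prime-to-$p$ bookkeeping.

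Finally, strong multiplicity one for the relevant unitary groups lets one cancel the global prime-to-$p$ data, and multiplicity one pins down the multiplicity as exactly $1$; comparing the remaining $\GL_n(F)\times W_F$-modules then yields
$$H^\bullet((G,[b],\{\mu\}))[\rho]=\pi\boxtimes\big(\mathrm{rec}(\pi)(-\tfrac{n-1}{2})\big),$$
which is precisely the assertion of Conjecture \ref{kottwitzconj} in this case. I expect essentially all the difficulty to reside in the global input — the stabilization and base change needed for the automorphic side, and the control of the non-basic strata needed to reduce the geometric side to the Lubin--Tate tower — rather than in the final bookkeeping.
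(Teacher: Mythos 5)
The paper gives no proof of its own here, merely citing Harris--Taylor \cite{HT}, Thm.~VII.1.3, and your sketch faithfully reconstructs the strategy of that reference: globalize $\rho$ (via Jacquet--Langlands) to an automorphic representation of a simple unitary similitude group, compute the $\ell$-adic cohomology of the associated proper Shimura variety once by the Langlands--Kottwitz method and once via the Rapoport--Zink uniformization of its basic locus, show that a supercuspidal $\pi$ is invisible on non-basic Newton strata, and cancel the prime-to-$p$ data by multiplicity one. The one small imprecision is in attributing the middle-degree concentration to ``Deligne's purity together with properness'': in \cite{HT} this concentration comes from choosing the globalization so that $\Pi_\infty$ is a discrete series (so only middle-degree $(\mathfrak{g},K)$-cohomology is nonzero), with purity then controlling the weight rather than the degree.
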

Outside the case of the Drinfeld cocharacter, there is the following result. 
\begin{theorem}[Shin \cite{Shin}, Cor.~1.3]
 Let $G = {\rm Res}_{F/\BQ_p} (\GL_n)$, where $F/\BQ_p$ is an unramified extension. Then the Kottwitz conjecture
is true for $(G, [b], {\{\mu\}})$.
\end{theorem}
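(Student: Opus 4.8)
The plan is to deduce the statement by a global argument, following Shin \cite{Shin}. For $G={\rm Res}_{F/\BQ_p}(\GL_n)$ with $F/\BQ_p$ unramified and $\{\mu\}$ minuscule, the tower $\BM(G,[b],\{\mu\})$ is the tower over an EL-type RZ space with $B=F$ a field and hyperspecial level at $p$ (the components of $\{\mu\}$ prescribing a multi-signature; the case of signature $(1,0,\dots,0)$ at one place is the Lubin--Tate case of Harris--Taylor \cite{HT}). One realizes this tower inside the $p$-adic uniformization of a Newton stratum of a global PEL Shimura variety, and then reads off its $\ell$-adic cohomology from the --- by then known --- cohomology of that Shimura variety and of the attached Igusa varieties.

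First I would globalize. Choose an imaginary CM field $\mathcal E$ with a place $v\mid p$ and $\mathcal E_v\cong F$ (possible since $F/\BQ_p$ is unramified), a central division algebra over $\mathcal E$ with invariants above $p$ matched to $[b]$, and a positive involution of the second kind, producing a unitary-similitude group $\mathbf G$ over $\BQ$ with $\mathbf G_{\BQ_p}\cong G\times(\text{split factors})$ and a PEL Shimura datum of $p$-adic type $\{\mu\}$. Unramifiedness of $F/\BQ_p$ lets one arrange hyperspecial level at $p$, so the Shimura variety has good reduction there and the Langlands--Kottwitz method applies; away from $p$ one keeps the level flexible, so any prescribed smooth $J(F)$-representation $\rho$ can occur as a local component. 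By $p$-adic uniformization (\cite{RZ}, Ch.~6), the tube over the basic isogeny class in the special fibre is $\bar{\mathbf G}(\BQ)\backslash\big(\BM(G,[b],\{\mu\})^K\times\mathbf G(\BA_f^p)/\mathbf K^p\big)$, equivariantly for $\mathbf G(\BA_f^p)\times G(\BQ_p)\times J(\BQ_p)$ and the Weil descent.

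Next I would invoke Mantovan's formula: in the relevant Grothendieck group, $H^*$ of the Shimura variety equals a sum over Newton strata $[b']$ of a Hochschild--Serre-type product over $J_{b'}(\BQ_p)$ of $H^*(\BM(G,[b'],\{\mu\}))$ with $H^*$ of the corresponding Igusa variety. Into this I would feed: (a) the cohomology of Igusa varieties, computed by Shin via stabilization of the twisted trace formula and a Langlands--Kottwitz point count, as an explicit sum over automorphic representations of the global inner form $I$ (with $I_{\BQ_p}\cong J_b$) in terms of Jacquet modules and normalized transfer; and (b) the cohomology of the compact unitary Shimura variety itself, described via the Kottwitz conjecture for Shimura varieties and base change between $\GL_n$ and unitary groups (Shin, Morel) in terms of global Langlands parameters and $r_{-\mu}$. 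Then I would invert Mantovan's formula: varying the prime-to-$p$ level and the archimedean type, and using strong multiplicity one for $\GL_n$ together with local-global compatibility of Langlands for $\GL_n$ and its inner forms (so the relevant $L$-packets are singletons and $S_\varphi$ reduces to $Z(\hat G)^\Gamma$), one isolates the contribution of a single local parameter $\varphi$, shows the non-basic strata contribute only terms already accounted for, and matches the surviving $W_E$-module with $r_{\{\mu\}}\circ\varphi_E$ twisted by $-d/2$, which is precisely the right-hand side of Conjecture \ref{kottwitzconj} (the $\Hom_{S_\varphi}$ there reducing to one copy of $r_{\{\mu\}}\circ\varphi_E$).

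The hard part will be step (a) together with the inversion: computing $H^*$ of Igusa varieties requires the full stabilized twisted Arthur--Selberg trace formula, the classification of the discrete automorphic spectrum of $\GL_n$ and its inner forms, and careful control of signs, transfer factors and the auxiliary division-algebra invariants; inverting Mantovan's formula needs a genuinely global input --- existence of automorphic representations of $\mathbf G$ with prescribed behaviour (supercuspidal-type at $p$, cohomological and regular at $\infty$) --- to separate variables and rule out interference from non-basic strata. A secondary point is keeping the restriction-of-scalars bookkeeping straight, so that $B({\rm Res}_{F/\BQ_p}\GL_n,\{\mu\})=\prod_{\tau}B(\GL_n,\{\mu_\tau\})$ is matched correctly on the automorphic side.
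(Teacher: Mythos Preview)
Your sketch is essentially correct and matches what the paper records: the paper does not give its own proof but simply cites Shin and notes that ``Shin's proof uses global methods, and also uses a point counting argument on Igusa varieties.'' Your outline---globalize to a PEL Shimura variety of unitary type with hyperspecial level at $p$, apply Mantovan's product formula to relate the cohomology of the Shimura variety to that of RZ towers and Igusa varieties, feed in Shin's computation of the Igusa cohomology via the (stabilized) trace formula and the known description of the Shimura-variety cohomology, then invert using strong multiplicity one for $\GL_n$---is exactly Shin's strategy.

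One small point of emphasis: the $p$-adic uniformization of the basic locus that you invoke from \cite{RZ}, Ch.~6, is not really what drives the argument; it is Mantovan's formula (summing over \emph{all} Newton strata) that carries the weight, and the inversion must control the non-basic contributions rather than avoid them. You do say this further down, but the paragraph about uniformizing the basic tube is a bit of a red herring in the logical flow.
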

Shin's proof uses global methods, and also uses a point counting argument on Igusa varieties.   Earlier, Fargues  \cite{Fargues}, Thm.~8.1.5, had proved the weakened version of the Kottwitz conjecture of Remarks \ref{remkottwitzconj} (iv) in this case under the condition that $J_b$ is anisotropic modulo center.  We note that Fargues states in  \cite{Fargues}, Thm.~8.1.5 that the same conclusion  also holds if  $J_b\simeq G$. However, Fargues has informed us that Mieda pointed out a gap in the proof in loc.~cit., related to the use of the  trace formula. 

\smallskip

\noindent c) Outside these cases related to $\GL_n$, the conjecture does not seem to be proved for all $\rho$ for a
single group. For unitary groups, Fargues states in  \cite{Fargues}, Thm.~8.2.2, the Kottwitz conjecture  for $(G, [b], {\{\mu\}})$ in its weakened version of Remarks \ref{remkottwitzconj} (iv), under the following hypotheses. First, 
 $G = {\rm Res}_{F_0/\BQ_p} ({\rm GU}_3)$, where $F_0/\BQ_p$ is an unramified extension of odd degree, and where 
${\rm GU}_3$ is the quasi-split group of unitary similitudes in $3$ variables for an unramified quadratic
extension $F/F_0$.  Furthermore,   ${\{\mu\}}$ is arbitrary and $[b] \in  B (G, {\{\mu\}})$ is the
unique basic element. Finally, 
$\rho$ is a stable supercuspidal representation of $J_b (\BQ_p)$.
Note that then $J_b = G$ (as always for a basic element of an odd group of unitary similitudes). The
hypothesis that $\rho$ be a stable supercuspidal representation is equivalent to asking that the
$L$-packet of $\rho$ corresponds to a discrete $L$-parameter and consists of only one element,
cf. \cite{Fargues}, App. C.4. However, Fargues has informed us that the proof in loc.~cit. contains the same kind of gap as the one occurring in the proof of  \cite{Fargues}, Thm.~8.1.5, comp. b) above. 

Mieda has informed us of joint work in progress with T. Ito that would improve the situation. They consider the group $G={\rm GSp}_4$ and also unramified unitary groups of signature $(n-1, 1)$.

\begin{remark}
 A weakening of the Kottwitz conjecture would be to disregard the action of the Weil group $W_E$.
For this weakening one may hope to obtain a purely local proof. A general proof seems out of sight, but 
there are encouraging partial results which we enumerate as follows.
\end{remark}
\begin{altenumerate}
 \item There is the purely local paper by Strauch \cite{Strauch-adv} that considers the weakened conjecture in the Lubin-Tate
 case for an arbitrary $F/\BQ_p$. His method is an extension of Faltings' method of using the Lefschetz
fixed point formula for the local Shimura variety, \cite{Falt-Dr}.
\item There is a series of papers by Mieda \cite{Mieda-LT, Mieda-lJL, Mieda-gsp4} in which he transposes the method of Strauch to
the case when $G= {\rm GSp}_4$. He comes close to proving the weak version of the Kottwitz conjecture in this
case.
\end{altenumerate}

\section{Cohomology in the non-basic case} \label{harrisconj}

In the previous section, we considered local Shimura data $(G, [b], {\{ \mu \}})$ where $[b]$ is basic and
gave a conjectural formula for the element $H^{\bullet} ((G, [b], {\{ \mu \}}))[{\rho}]$  of $ {\rm Groth} (G(F) \times W_E)$, provided
that $\rho$ is an irreducible admissible representation of $J_b(F)$ with discrete $L$-parameter. In the
present section, we drop the assumption that $[b]$ is basic. In this case we are not able to give a precise
formula for $H^{\bullet} ((G, [b], {\{ \mu \}}))[{\rho}]$, not even if $\rho$ has a discrete $L$-parameter\footnote{In \cite{Shin} Shin gives in special cases an interesting {\it averaging formula} over $B(G, \{\mu\})$ for a fixed pair $(G, \{\mu\})$.   }. However,
Harris has proposed an inducing formula in this case, cf. \cite{H}. We will give a modified version of
Harris's formula\footnote{In the Zurich lectures of  the first author, this modified version was called
the \emph{Harris-Viehmann conjecture}.}.
To simplify the formula, we will make the  assumption that $G$ is quasi-split. 

At this point, we should stress that this conjecture has met with a lot of scepticism among people around us (and we ourselves are sympathetic to this group!). But, even if the conjecture turns out not to be true in its precise prediction, we believe that something in this direction should be valid, and that trying to prove it should teach us interesting things about local Shimura varieties. One simple reason why the conjecture might have to be modified is that it is not clear whether the definition of $\ell$-adic cohomology in section \ref{elladiccoho} is the correct one in this context\footnote{For the Kottwitz conjecture, the correct choice of the cohomology theory should not matter; however, for the HV conjecture it possibly could be important. This is analogous to the situation in the global case where, in order to accomodate the contributions of all automorphic representations to the cohomology of a Shimura variety, one uses  the middle intersection cohomology of the Baily-Borel compactification. It is conceivable that one has to define and use a similar cohomology theory also in the local case considered here. }\label{footcoho}.

\subsection{The HV conjecture} Let $L$ be a Levi subgroup of $G$ such that $b \in L (\breve{F})$ for some $b \in [b]$. Let 
\begin{equation}\label{defofI}
\begin{aligned}
 I_{b, {\{ \mu \}}, L} = \{ \mu' \mid \text{cocharacter of $ L $} \text{ conjugate to }\{ \mu \}\text{ in $ G$}, \\ \text{ with $ [b]_L \in B(L, \{ \mu' \}_L )$} \},
\end{aligned}
\end{equation}
taken modulo $L$-conjugacy.

\smallskip

Here are some facts about the sets $ I_{b, {\{ \mu \}}, L}$. 

\begin{lemma}
Let $L$ and $b\in L(\breve{F})$ be as above. Then 
\begin{altenumerate}
\item $ I_{b, {\{ \mu \}}, L}$ is finite.
\item If $G$ is unramified, then $ I_{b, {\{ \mu \}}, L}$ is non-empty. 
\item If $G$ is split,  then $I_{b, {\{ \mu \}}, L}$ consists of a single element.
\end{altenumerate}
\end{lemma}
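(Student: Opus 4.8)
The plan is to establish the three parts in turn, deducing (iii) from (ii). For part (i), fix a maximal torus $T\subset L$ contained in a Borel subgroup $B$ of $G$, both defined over $F$; this is possible since $G$ is quasi-split, after first replacing $(L,b)$ by a $G(\breve F)$-conjugate if necessary (which changes neither $[b]$ nor, up to a canonical bijection, the set $I_{b,\{\mu\},L}$). Geometric conjugacy classes of cocharacters of $L$ correspond to $W_L$-orbits in $X_*(T)$, where $W_L=N_L(T)/T$, and those which are $G$-conjugate to $\{\mu\}$ are precisely the $W_L$-orbits contained in the single $W$-orbit $W\cdot\mu_{\dom}$, with $W=N_G(T)/T$ and $\mu_{\dom}$ the $B$-dominant representative. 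Since $W$ is finite this is a finite set, and $I_{b,\{\mu\},L}$ is a subset of it; no further input is needed.

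For part (ii), assume $G$ unramified. The first, formal, observation is that for every $\{\mu'\}_L\subset\{\mu\}$ the natural map $B(L)\to B(G)$ carries $B(L,\{\mu'\}_L)$ into $B(G,\{\mu\})$, a standard compatibility combining functoriality of $\kappa$ along $\pi_1(L)_\Gamma\to\pi_1(G)_\Gamma$ with the behaviour of the dominance orders (cf. \cite{K2}, \S 6). Hence $\bigsqcup_{\{\mu'\}_L\subset\{\mu\}}B(L,\{\mu'\}_L)$ is contained in the preimage of $B(G,\{\mu\})$ in $B(L)$. The substance of (ii) is the reverse inclusion applied to the given class: since $b\in L(\breve F)$ and $[b]\in B(G,\{\mu\})$, one must produce $\{\mu'\}_L\subset\{\mu\}$ with $[b]_L\in B(L,\{\mu'\}_L)$. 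I would argue by choosing, among the $L$-dominant conjugates of $\mu_{\dom}$, the one $\mu'$ whose image in $\pi_1(L)_\Gamma$ equals $\kappa_L([b]_L)$ (such a conjugate exists because $b\in L(\breve F)$ already constrains $\kappa_L([b]_L)$ to the sublattice of the fibre of $\pi_1(L)_\Gamma\to\pi_1(G)_\Gamma$ over $\mu^\natural$ hit by these conjugates; the Cartan-type description of $\kappa_G$ for unramified $G$ recalled in Remark \ref{Bexplicit}(ii) makes this transparent), and then checking $\nu_{[b]_L}\leq\overline{\mu'}$ in the $L$-relative-coroot order, which reduces, root space by root space of the unipotent radical, to the inequality $\nu_{[b]}\leq\overline{\mu}_{\dom}$ in $G$ already available. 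Rather than carry this out by hand, the efficient route is to quote the Hodge--Newton reduction to Levi subgroups for unramified $G$, as developed in the theory of affine Deligne--Lusztig varieties (cf. \cite{CKV}), which states precisely that $\bigsqcup_{\{\mu'\}_L}B(L,\{\mu'\}_L)$ surjects onto the preimage of $B(G,\{\mu\})$ in $B(L)$; applied to $[b]_L$ this gives the desired $\{\mu'\}_L$.

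For part (iii), assume $G$ split; in particular $G$ is unramified, so (ii) already gives $I_{b,\{\mu\},L}\neq\emptyset$, and it remains to prove uniqueness. Since $\Gamma$ acts trivially on $X_*(T)$ we have $\overline{\mu'}=\mu'$ and $\pi_1(L)_\Gamma=\pi_1(L)=X_*(T)/Q^\vee_L$, and by the neutrality clause of Definition \ref{defadm} every element $\{\mu'\}_L$ of $I_{b,\{\mu\},L}$ has $\mu'^\natural$ equal to the single fixed class $\kappa_L([b]_L)\in\pi_1(L)$. It therefore suffices to show that the assignment $\{\mu'\}_L\mapsto\mu'^\natural$ is injective on the set of $L$-conjugacy classes contained in the minuscule class $\{\mu\}$. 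This is a classical fact about minuscule coweights: reducing to $G$ almost simple, $W\cdot\mu_{\dom}$ is the weight set of a minuscule representation of the dual group, its restriction to the dual Levi is multiplicity-free into minuscule summands, and distinct summands are separated by their central characters, which are exactly the invariants $\mu'^\natural$. Combining this with (ii) yields $\#I_{b,\{\mu\},L}=1$.

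I expect the genuine obstacle to be part (ii): making the Hodge--Newton surjectivity statement precise and identifying the mechanism by which $b\in L(\breve F)$ constrains $\kappa_L([b]_L)$ to the relevant sublattice. This is exactly where the hypothesis that $G$ is unramified enters; for general quasi-split $G$ one would have to track the interaction of the $\Gamma$-action with the coroot lattices of $L$ and with the averaging $\mu'\mapsto\overline{\mu'}$, which is why (ii) is asserted only in the unramified case. Granting (ii), parts (i) and (iii) are essentially formal, modulo the minuscule-coweight combinatorics invoked in (iii).
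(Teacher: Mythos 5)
Your proof follows essentially the same route as the paper: finiteness of the absolute Weyl group for (i), an appeal to Chen--Kisin--Viehmann (\cite{CKV}, Prop.~3.4.1, after reducing to a minimal Levi containing a representative of $[b]$) for (ii), and the bijection between $L$-dominant $L$-minuscule cocharacters and $\pi_1(L)$ for (iii). The only noteworthy differences are that you phrase the key minuscule-coweight fact in the language of dual-group representations rather than citing Bourbaki directly, and that you explicitly derive existence in (iii) from (ii) --- a point the paper's proof of (iii) leaves implicit, since its bijection argument as written establishes only uniqueness.
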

\begin{proof}
Let $T$ be a maximal torus of $G$ contained in $L$. Then $\{\mu\}$ has a representative in $X_*(T)$, and similarly for each $\{\mu'\}_L\in I_{b, {\{ \mu \}}, L}$. By the first condition of \eqref{defofI} these representatives are in the same orbit under the action of the absolute Weyl group of $T$ in $G$. As this group is finite, the same holds for $I_{b, {\{ \mu \}}, L}$. 

For the non-emptiness statement in (ii) it is enough to consider Levi subgroups over $F$ which are minimal with the property that they contain an element of $[b]$. After replacing $L$ and the representative of $[b]$ by a conjugate we may assume that $L$ is in addition the Levi factor of a standard parabolic subgroup (with respect to a chosen Borel containing $T$) and that the $L$-dominant Newton point of the representative of $[b]$ is $G$-dominant. Then the statement follows from \cite{CKV}, Proposition 3.4.1. Indeed the set $\bar I_{\mu,b}$ considered in loc.~cit. coincides with our set $I_{b, {\{ \mu \}}, L}$, since $\{\mu\}$ is minuscule.

Now let us prove  the last statement. We use the fact that the projection  
\begin{equation}\label{minuscrep}
\{\mu'\in X_*(T)\mid \text{ $\mu'$ $L$-dominant and $L$-minuscule} \}\rightarrow \pi_1(L)
\end{equation}
is bijective (see \cite{Bou}, ch. VI, \S 2, ex. 5). Note that, as $G$ and hence $L$ are split, we have $\pi_1(L)_{\Gamma}=\pi_1(L)$. Elements $\{\mu'\}_L\in I_{b, {\{ \mu \}}, L}$ have a unique representative in the left hand set.  However, $[b]_L\in B(L,\{\mu'\}_L)$ implies that such a representative maps under \eqref{minuscrep} to $\kappa_L(b)$, considered as an element of $\pi_1(L)_{\Gamma}=\pi_1(L)$. Hence there is a unique such representative.  \end{proof}

\begin{remark}
We expect that the second assertion of the lemma also holds without the assumption that $G$ is unramified. However, the proof of \cite{CKV}, Prop. 3.4.1 does not directly generalize to that context.
\end{remark}

\begin{example}\label{exIbig} The following example shows that for non-split $G$ the 
set $ I_{b,{\{ \mu \}}, L}$ may have more than one element.  Note that this contradicts \cite{H}, Proposition 4.1 (ii); the false statement is also
used in the original formulation of Harris' conjecture \cite{H}, Conjecture
5.2.

Let $G=\Res_{F/\mathbb{Q}_p}\GL_5$ where $F/\mathbb{Q}_p$ is an unramified
extension of degree 2. Let $[b]$ be a $\sigma$-conjugacy class with Newton vector 
\begin{equation*}
\nu_{[b]}= \big((1/4)^{(4)}, (2/3)^{(6)}\big) . 
\end{equation*}
Let
$L=\Res_{F/\mathbb{Q}_p}(\GL_2\times \GL_3)$. We choose $b\in [b]\cap
L(\breve F)$ such that the image in the first factor of $L$ is basic of slope
$1/4$ and the image in the other factor is basic of slope $2/3$. Then $J_b$ is an inner
form of $L$.

We fix the Borel subgroup $B=\Res_{F/\mathbb{Q}_p}B_0$ where $B_0$ are the
lower triangular matrices and let $T=\Res_{F/\mathbb{Q}_p}T_0$ where $T_0$
is the diagonal torus in $\GL_5$. Then we can represent conjugacy classes
of cocharacters of $G$ by their unique representative in the $G$-dominant
elements of $X_*(T)$. Similarly we represent conjugacy classes of
cocharacters of $L$ by $L$-dominant elements of $X_*(T)$, where we choose
$B\cap L$ as Borel subgroup of $L$. Over $\breve F$ we can identify $G$
with a product of two factors $\GL_5$. Using this and the
identification $X_*(T_0)=\mathbb{Z}^5$ we can write dominant elements of
$X_*(T)$ as elements of $(\mathbb{Z}^5)^2$ such that each tuple
$(\mu_1,\dotsc,\mu_5) \in \mathbb{Z}^5$ satisfies $\mu_1\leq\dotsm\leq\mu_5$.

Let
\begin{equation*}
\mu=\big(\big(0^{(3)}, 1^{(2)}\big), \big(0^{(2)}, 1^{(3)}\big) \big) . 
\end{equation*}
Then  $[b]\in B(G,\{\mu\})$.
\smallskip

\noindent{\bf Claim.} {\it The set $I_{b, {\{ \mu \}}, L}$ contains two
elements $\mu_1, \mu_2$. The two local Shimura varieties  corresponding to $(L, [b]_L, \{\mu_i\}_L)$ for $i=1,2$ have different dimensions, and are in particular not isomorphic.}
\begin{proof}
Again we represent conjugacy classes of cocharacters of $L$ by
$L$-dominant elements of $X_*(T)$. An element
$$
\mu'=\big((\mu'_{11},\dotsc,\mu'_{15}),(\mu'_{21},\dotsc,\mu'_{25})\big)\in
X_*(T)
$$
is in the $G$-conjugacy class of $\mu$ if and only if it satisfies $\mu'_{ij}\in
\{0,1\}$ and $\sum_{j=1}^5\mu'_{1j}=2$ and $\sum_{j=1}^5\mu'_{2j}=3$. From
$[b]\in B(L,\{\mu'\})$ we  obtain
$\mu'_{11}+\mu'_{21}+\mu'_{12}+\mu'_{22}=1$ and the sum of the remaining
coordinates is $4$. This leaves
two elements of $I_{b, {\{ \mu \}}, L}$, namely
$$I_{b, {\{ \mu \}}, L}=\big\{\big((0^{(2)},0, 1^{(2)}), (0,1, 0,1^{(2)})\big),
\big((0,1,0^{(2)},1),(0^{(2)},1^{(3)})\big)\big\}.
$$
The dimensions of the corresponding local Shimura varieties can be
computed as sums of dimensions of the corresponding RZ spaces for the groups
$\Res_{F/\mathbb{Q}_p}\GL_2$, resp.~ $\Res_{F/\mathbb{Q}_p}\GL_3$. They are equal to $\langle \mu', 2\rho_L\rangle$, where $\rho_L$ is the
half-sum of the positive roots of $T$ in $L$. Thus we obtain in the two
cases that the dimensions of the associated local Shimura varieties are 5 and 3, respectively.
\end{proof}
\end{example}

Using the sets  $I_{b, \{\mu\}, L}$ we may now formulate the conjecture. For this we make the following additional assumption:
\begin{equation}\label{condstar}
\text{  \it $J_b$ is an inner form of a Levi subgroup contained in $L$. }
\end{equation} 
Note that this assumption is indeed a restriction. An easy example is $b=1$, in which case $J=G$, but without \eqref{condstar}, there is no restriction on $L$. The condition implies that there exists a unique parabolic $P$ with Levi subgroup
$L$ such that $P \otimes_F \breve{F}$ contains the parabolic subgroup $P_b$ of $G \otimes_F \breve{F}$
corresponding to the slope filtration for $b \sigma$, i.e.,
\begin{equation*}
 P_b (\breve{F}) = \{ g \in G (\breve{F}) \mid \lim_{t \rightarrow 0} \, g  \nu_b (t) g^{-1} \, \text{ exists}  \} \, .
\end{equation*}

For 
${\{ \mu' \}_L} \in I_{b, \{\mu\}, L}$, we denote by $H^{\bullet} (( L, [b]_L, {\{ \mu' \}_L} ))[{\rho}]$
the corresponding element of ${\rm Groth} (L(F) \times W_{E_{\{\mu'\}}} )$. Here, as before, $\rho$ is an irreducible
admissible representation of $J_b (F)$. Note that, even if the local Shimura variety 
$\big(\BM (G, [b], {\{\mu\}})^K\big)_K$ comes from a RZ space, this is not necessarily true of 
$\big(\BM (L, [b], {\{\mu'\}})^{K_L}\big)_{K_L}$, cf. Examples \ref{secexlsh}, (iii).
\begin{conjecture}[HV-Conjecture]\label{HVconj}
Under assumption \eqref{condstar}  there is an equality in ${\rm Groth} (G (F) \times W_{E_{\{\mu\}}} )$,
\begin{equation*}
 H^{\bullet} ((G, [b], {\{\mu\}})) [\rho] = {\rm Ind}^{G(F)}_{P(F)} \big(\sum_{{\{\mu'\}_L} \in I_{b, \{\mu\}, L}}
H^{\bullet} ((L, [b]_L, {\{\mu'\}_L}))[\rho]\big).
\end{equation*}
\end{conjecture}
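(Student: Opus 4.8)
This is a conjecture, not a theorem with an anticipated proof; nonetheless, here is the strategy one would adopt to attack it and the heuristics that make it plausible. The geometric origin of the formula is the slope filtration: for $b\in L(\breve F)$ with $L$ as in \eqref{condstar}, the parabolic $P_b\subset G\otimes_F\breve F$ attached to the slope decomposition of $b\sigma$ descends to a parabolic $P$ over $F$ with Levi $L$. On the level of the (conjectural) local Shimura variety, this filtration should induce a locally closed stratification, or rather a Harder--Narasimhan-type decomposition of $\breve{\mathcal F}(G,b,\{\mu\})^{\rm wa}$ (and of its admissible locus) indexed by the reduction types of the filtration, whose open stratum is the locus where the Hodge filtration is ``generic'' relative to the slope filtration. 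The plan is: (1) construct, on each member $\BM^K$ of the tower, a morphism to a partial flag variety $G/P$ (equivalently, the bundle of reductions of structure group to $P$), encoding the position of the universal object relative to the slope filtration; (2) identify the fibers of this morphism, over the ``generic'' open part, with (disjoint unions of translates of) the members of the tower $\BM(L,[b]_L,\{\mu'\}_L)$, where $\{\mu'\}$ ranges over $I_{b,\{\mu\},L}$ — the index set arises exactly because a minuscule $\{\mu\}$ restricted to $L$ can have several $L$-conjugacy classes of relative positions compatible with $[b]_L$, cf. Example \ref{exIbig}; (3) run the Leray/Hochschild--Serre spectral sequence for this fibration, combined with the fact that for the non-generic strata the contribution to $H^\bullet[\rho]$ vanishes (because over those strata $b$ lies in a smaller Levi, so $J_b$ cannot be an inner form of a Levi inside that one, forcing the relevant $\Ext$-groups against $\rho$ to vanish), to collapse the alternating sum to the open stratum; (4) recognize the resulting $G(F)$-module as a parabolic induction $\Ind_{P(F)}^{G(F)}$ of the $L(F)$-module assembled from the $H^\bullet((L,[b]_L,\{\mu'\}_L))[\rho]$, the induction appearing because the ``position relative to $P_b$'' fiber bundle has fiber $G(F)/P(F)$ at the level of $F$-points and the cohomology of $G/P$ with the appropriate local coefficients computes the induced representation (this is the local analogue of the computation of the cohomology of the boundary of a Shimura variety, or of Mantovan's formula relating RZ spaces, Igusa varieties and affine Deligne--Lusztig varieties).

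Several steps need genuine input beyond what is in the excerpt. For RZ spaces, step (1) and (2) are Mantovan's product structure / the Harder--Narasimhan stratification of the special fiber together with the work of Shen, Hong and others relating Newton strata to smaller RZ data; one would want a purely rigid-analytic (or, better, $v$-stack / diamond) incarnation of this, which is where Scholze's and Fargues' machinery — slope reductions of $G$-bundles on the Fargues--Fontaine curve — enters: the ``position relative to the slope filtration'' is literally the choice of a reduction of the $G$-bundle $\mathcal E_b$ to $P$, and the generic locus is where this reduction is the canonical HN reduction. Step (3) requires knowing that the cohomology with compact supports behaves well for the stratification — this is precisely the point flagged in footnote \ref{footcoho}: it is not clear that the naive $H^i_c$ of section \ref{elladiccoho} is the right object, and the conjecture may need the correct (perverse/intersection-type) cohomology theory to even be stated cleanly. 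Step (4)'s identification with parabolic induction should be formal once one has a clean fibration with fiber $G/P$ and knows the $J(F)$-equivariance, using that $\Ext^\bullet_{J_b(F)}$ against $\rho$ commutes with the relevant (co)limits by Proposition \ref{cohoprop}.

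The main obstacle, and the reason this is posed as a conjecture, is step (2)–(3): making precise and proving that the non-generic HN strata contribute nothing to the $\rho$-isotypic alternating sum. The vanishing is morally because on a non-generic stratum the relevant group of ``automorphisms'' is a proper Levi $M\subsetneq L$ (or an inner form thereof), and $\rho$, being a representation of $J_b(F)$ which is an inner form of a Levi inside $L$, is not ``seen'' by the cohomology supported there — but turning this into an actual $\Ext$-vanishing statement requires control over the cohomology of open subspaces of period domains that we simply do not have in general (even the comparison $\mathcal F^{\rm a}\subseteq\mathcal F^{\rm wa}$ of the excerpt can be strict). A secondary obstacle is that the very objects $\BM(L,[b]_L,\{\mu'\}_L)$ on the right-hand side are only conjectural (they need not be RZ spaces even when the left side is, cf. Examples \ref{secexlsh}, (iii)), so the statement is contingent on the existence part of the main conjecture of section \ref{locShvar}. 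In practice, the way forward is to prove it in cases where everything is a genuine RZ space and the HN stratification is understood — e.g. $G=\Res_{F/\BQ_p}\GL_n$, where Mantovan's formula and the results of Harris–Taylor and Shin are available — which is essentially the known-cases situation, and then to reformulate the general statement in the language of diamonds so that the slope-reduction picture can be used to control the strata.
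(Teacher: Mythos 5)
The statement is a conjecture; the paper does not prove it, and you correctly treat it as such, offering a plausibility sketch rather than a proof. Your heuristic is broadly in line with the evidence the paper marshals — most importantly Mantovan's Theorems \ref{thmmant} and \ref{thmmantk}, which confirm the conjecture in the unramified RZ case via the Hodge--Newton filtration — but one structural step of your proposed strategy does not match the mechanism that makes those known cases work. You posit a map $\BM^K\to G/P$ with a ``generic'' open locus plus lower strata, and you hang the argument on the vanishing of the $\rho$-isotypic contribution of the non-generic strata. In the proofs of Mantovan and Shen there is no such stratification and no vanishing lemma: the slope (Hodge--Newton) filtration of the universal $p$-divisible group exists globally over $\CM$ (this is the content of \cite{mv}, \cite{She}), giving an honest fibration of $\CM$ over the product of the Levi's RZ spaces with fibers governed by the unipotent radical, and the parabolic induction $\Ind^{G(F)}_{P(F)}$ then comes directly from this fibration together with the Iwasawa decomposition. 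The index set $I_{b,\{\mu\},L}$ — which you correctly tie to Example \ref{exIbig} and the failure of $\{\mu\}$ to refine uniquely into an $L$-conjugacy class when $G$ is non-split — indexes the summands of the source of that fibration, not strata. Your stratification-with-vanishing picture might be what is needed for a general Levi $L$ strictly larger than the slope Levi, where the HN filtration alone does not determine a $P$-reduction; but that is precisely where the conjecture is open, and there, as you and the paper both flag, even the correct choice of cohomology theory (naive $H^i_c$ versus something perverse) is unresolved.
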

Note that $W_{E_{\{\mu\}}} \supset \bigcup_{\{\mu'\}_L\in I_{b, \{\mu\}, L}} W_{E_{\{\mu'\}_L}}$, and that the actions of
$W_{E_{\{\mu'\}_L}}$ on the individual summands occuring on the RHS extend in a natural way to an action of
$W_{E_{\{\mu\}}}$ on the total sum, and hence on the induced representation.

We note the following consequence of the HV-conjecture.
\begin{corollary}
 [of conjecture \ref{HVconj}]\label{corhv} Assume the HV-conjecture for $(G, [b], {\{\mu\}})$ and $\rho$,  and let
$L \subsetneq G$. Let $\pi$ be a supercuspidal representation of $G(F)$. Then the $\pi$-isotypical component of $H^{\bullet} ((G, b, \mu))[{\rho}]$ is zero.
\end{corollary}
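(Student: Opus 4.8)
The plan is to deduce the vanishing directly from the inductive formula of Conjecture \ref{HVconj}. Assuming the HV-conjecture for $(G,[b],\{\mu\})$ and $\rho$, we have an equality in ${\rm Groth}(G(F)\times W_{E_{\{\mu\}}})$
$$
H^{\bullet}((G, [b], \{\mu\}))[\rho] = {\rm Ind}^{G(F)}_{P(F)}\Big(\sum_{\{\mu'\}_L\in I_{b,\{\mu\},L}} H^{\bullet}((L,[b]_L,\{\mu'\}_L))[\rho]\Big),
$$
where $P$ is a \emph{proper} parabolic subgroup of $G$ (proper since $L\subsetneq G$) with Levi factor $L$. The right-hand side is a (virtual) parabolically induced representation from the proper parabolic $P(F)$.

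First I would recall that every irreducible subquotient of a representation parabolically induced from a proper parabolic subgroup $P(F)\subsetneq G(F)$ is non-supercuspidal: this is the standard fact that supercuspidal representations do not occur as subquotients of properly induced representations, equivalently that the Jacquet module of a supercuspidal representation along any proper parabolic vanishes (see e.g. Casselman, or \cite{Renard}). In the Grothendieck group ${\rm Groth}(G(F)\times W_E)$, each class is a finite $\BZ$-linear combination of classes $[\pi_i\boxtimes \sigma_i]$ with $\pi_i$ irreducible admissible $G(F)$-modules and $\sigma_i$ continuous $W_E$-modules; the $\pi$-isotypical component of a class is the sub-sum over those $i$ with $\pi_i\cong\pi$. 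Since ${\rm Ind}^{G(F)}_{P(F)}(-)$ is exact and takes the class of an $L(F)\times W_{E_{\{\mu'\}_L}}$-module to a sum of classes $[\pi_i\boxtimes\sigma_i]$ in which every $\pi_i$ is a subquotient of a representation induced from $P(F)$, none of these $\pi_i$ can be isomorphic to the supercuspidal $\pi$. (One should be mildly careful that the $W_E$-action is carried along, but this is harmless: decompose each $H^{\bullet}((L,[b]_L,\{\mu'\}_L))[\rho]$ into its $L(F)$-isotypical pieces, apply ${\rm Ind}^{G(F)}_{P(F)}$ to the underlying $G(F)$-modules, and note the induction functor is applied to the $G(F)$-factor only, with the $W_E$-factor untouched except for the natural extension of the Weil-group action described after the statement of Conjecture \ref{HVconj}.)

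Therefore, taking $\pi$-isotypical components on both sides, the right-hand side contributes $0$, and hence the $\pi$-isotypical component of $H^{\bullet}((G,[b],\{\mu\}))[\rho]$ is zero, as claimed. The only point requiring any care is the book-keeping of the $W_E$-action through the induction functor and the Grothendieck group; the representation-theoretic input (supercuspidals do not appear in properly induced representations) is entirely standard. I do not anticipate a genuine obstacle here; the statement is essentially a formal consequence of the shape of the HV-formula, namely that in the non-basic-Levi case the cohomology is built by parabolic induction from a proper parabolic.
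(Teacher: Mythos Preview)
Your argument is correct and is exactly the paper's approach: the paper's proof is the single sentence ``Indeed, the $\pi$-isotypical component of any properly induced representation vanishes,'' which is precisely the standard fact you invoke and elaborate upon. Your additional remarks about the Grothendieck-group bookkeeping and the $W_E$-action are accurate but go beyond what the paper spells out.
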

\begin{proof}
 Indeed, the $\pi$-isotypical component of any properly induced representation vanishes.
\end{proof} 
\begin{remark}The same statement should be true for an arbitrary irreducible admissible representation $\pi$ 
 with a discrete
$L$-parameter. 
\end{remark}
\begin{remark}
 Let $L'$ be a Levi subgroup containing $L$, and let $P'$ be the parabolic subgroup with Levi factor
$L'$ containing $P$. Then the set $I^G_{b, {\{ \mu \}}, L}$ of (\ref{defofI}) decomposes into a disjoint sum of sets
$I^{L'}_{b, {\{ \mu' \}}, L}$, where $\{\mu'\}$ ranges over $I^G_{b, {\{ \mu \}}, L'}$. Assuming the HV-conjecture
for $(L',L)$, we have
\begin{equation*}
 H^{\bullet} (\BM (L', [b]_{L'}, {\{\mu'\}}))[{\rho}] = {\rm Ind}^{L'}_{P \cap L'} \big(\sum_{\{\mu''\}\in I^{L'}_{b, \{\mu'\}, L}} H^{\bullet}
(\BM (L, [b]_{L}, {\{\mu''\}}_L)[{\rho}]\big).
\end{equation*}
Summing both sides over $I^G_{b, \{\mu\}, L'}$, we obtain
\begin{equation*}
\begin{aligned}
 \sum_{\{\mu'\} \in I^G_{b, \{\mu\}, L'}} H^{\bullet} (\BM (L', [b]_{L'}, {\{\mu'\}})[{\rho}]& = \\
={\rm Ind}^{L'}_{P \cap L'} &\big(\sum_{\{\mu''\} \in I^G_{b, \{\mu\}, L}} H^{\bullet} (\BM (L, [b]_{L}, {\{\mu''\}}_L)
[{\rho}]\big) .
\end{aligned}
\end{equation*}
Hence, assuming the HV-conjecture for $(G,L')$, we obtain
\begin{equation*}
 H^{\bullet} (\BM (G, [b], {\{\mu\}}))[{\rho}] = {\rm Ind}^{G}_{P'} ({\rm LHS}) \, ,
\end{equation*}
which we may identify, by the transitivity of induction, with
\begin{equation*}
 {\rm Ind}^{G}_{P} \big(\sum_{\{\mu'\} \in I^G_{b, \{\mu\}, L}} H^{\bullet} 
(\BM (L, [b]_{L}, {\{\mu'\}}))[{\rho}] \big) \, ,
\end{equation*}
as predicted by the HV-conjecture for $(G, L)$.
\end{remark}

\subsection {Known results} In the remainder of this section, we list some cases where the HV-conjecture is known to hold. The earliest result in this direction is due to  Boyer in the equal characteristic case (for  elliptic $\mathcal{D}$-modules), \cite{B}. Later the direct analog in mixed characteristic of Boyer's technique has been used as one important ingredient in the proof by Harris/Taylor of the local Langlands correspondence for $\GL_n$. The key to their proof is  that in their case ($G=\GL_n, \{\mu\}=(1, 0^{(n-1)})$), if $[b]$ is non-basic, then the universal $p$-divisible group over the corresponding RZ tower is an extension of an \'etale $p$-divisible group by a bi-infinitesimal one. The following  is the most general result in this direction. 
\begin{theorem}[Mantovan]\label{thmmant}
Let $\mathcal{D}_{\BZ_p}$ be an unramified simple integral RZ-datum (comp. Remark \ref{unramifiedRZ}), and let $([b],\{\mu\})$ be the associated neutral acceptable pair. Let $L$ be the Levi subgroup of a parabolic $P$ over $\BQ_p$ such that $P$  is standard with respect to the choice of a Borel  subgroup  $B$ over $ \BQ_p$. Let $T$ be a maximal torus over $\BQ_p$ contained in $B\cap L$, and denote by $\mu_{\rm dom}$ the $B$-dominant representative of $\{\mu\}$ in $X_*(T)$. Assume that $b\in L(\breve \BQ_p)$ is such  that $[b]_L\in B(L,\{\mu_{\dom}\}_L)$ and such that $B\otimes_{\BQ_p}\breve\BQ_p\subset P_b\subset P\otimes_{\BQ_p}\breve\BQ_p$.  Then
\begin{altenumerate}
\item the set $I_{b, \{\mu\}, L}$ consists
of the single element $\mu_{\rm dom}$.
\item the HV-conjecture holds for $(G, b, \{\mu\})$ and $L$, for any $\rho$.
\end{altenumerate}
\begin{remark}
Let $\mathcal{D}_{\BZ_p}$ be an unramified simple integral RZ-datum and let $([b],\{\mu\})$ be the associated neutral acceptable pair. Assume that $[b]$ is non-basic and that $([b],\{\mu\})$  is  HN-reducible in the sense of Definition \ref{defhnregular}. Then by \cite{CKV}, Theorem 2.5.6, there is a proper standard parabolic subgroup $P$ of $G$ with Levi factor $L$ and a representative $b\in L(\breve \BQ_p)$ as demanded in the theorem.

In the special case where $G$ is split (i.e., ~equal to $\GL_{n}$ or ${\rm GSp}_{2n}$), the RZ space parametrizes $p$-divisible groups of a fixed isogeny class (and possibly a polarization). The smallest parabolic subgroup as in the theorem corresponds in this case to the filtration of these $p$-divisible groups into their multiplicative, bi-infinitesimal, and \'etale parts. For general $G$ this is still one possible choice for $P$, but in general not the smallest one. Note that this choice for $P$ is in general (even for $G=\GL_n$) much larger than the smallest one allowed in the conjecture. Indeed, for $G=\GL_n$, the smallest parabolic allowed in the conjecture corresponds to the slope filtration of the $p$-divisible groups (i.e.,~such that the subquotients are isoclinic).

In Example \ref{exIbig}, the condition in the theorem is not satisfied.
Indeed, in this case the only proper standard Levi subgroup containing an
element of $[b]$ and such that $B\otimes_{\BQ_p}\breve\BQ_p\subset
P_b\subset P\otimes_{\BQ_p}\breve\BQ_p$ is the subgroup $L$ considered in
the example. Then the dominant representative $\mu$ of $\{\mu\}$ is not
contained in $I_{b,\{\mu\},L}$, hence $[b]_L\notin B(L,\{\mu\}_L)$.
\end{remark}

\end{theorem}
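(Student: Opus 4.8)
Part (i) is a short computation with cocharacters; part (ii) is the geometric core, to be proved following Mantovan by means of a Hodge--Newton filtration of the universal $p$-divisible group.

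For (i): by hypothesis the $L$-conjugacy class of $\mu_{\rm dom}$ lies in $I_{b,\{\mu\},L}$, so only uniqueness is at issue. Let $\mu'\in I_{b,\{\mu\},L}$, taken $L$-dominant in $X_*(T)$. Since $\mu_{\rm dom}$ is the $G$-dominant member of the $G$-conjugacy class of $\mu'$, we have $\mu_{\rm dom}-\mu'=\sum_{\alpha}m_{\alpha}\alpha^{\vee}$ with all $m_{\alpha}\ge 0$, the sum running over the simple roots $\alpha$ of $T$ in $B$. Let $\chi\in X^*(T)$ be the sum of the fundamental weights attached to the simple roots that are \emph{not} roots of $L$. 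Then $\chi$ is $\sigma$-invariant (as $\sigma$ preserves $B$ and $L$, it permutes these simple roots), $\langle\chi,\gamma^{\vee}\rangle=0$ for every relative coroot $\gamma^{\vee}$ of $L$, and $\langle\chi,\mu_{\rm dom}-\mu'\rangle=\sum_{\alpha\notin L}m_{\alpha}$. Now $[b]_L\in B(L,\{\mu_{\rm dom}\}_L)$ and $[b]_L\in B(L,\{\mu'\}_L)$ force $\overline{\mu_{\rm dom}}-\nu_{[b]_L}$ and $\overline{\mu'}-\nu_{[b]_L}$ to be non-negative rational combinations of positive relative coroots of $L$, hence annihilated by $\chi$; so $\langle\chi,\overline{\mu_{\rm dom}}\rangle=\langle\chi,\nu_{[b]_L}\rangle=\langle\chi,\overline{\mu'}\rangle$. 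Since $\chi$ is $\sigma$-invariant it pairs a cocharacter and its $\Gamma$-average identically, whence $\langle\chi,\mu_{\rm dom}-\mu'\rangle=\langle\chi,\overline{\mu_{\rm dom}}-\overline{\mu'}\rangle=0$, i.e.\ $\sum_{\alpha\notin L}m_{\alpha}=0$ and so $m_{\alpha}=0$ for every simple root $\alpha$ not in $L$. Thus $\mu_{\rm dom}-\mu'$ lies in the coroot lattice of $L$, so $\mu_{\rm dom}$ and $\mu'$ have the same image in $\pi_1(L)$; being $L$-dominant and, since $\{\mu\}$ is minuscule, $L$-minuscule, they coincide by \cite{Bou}, ch.~VI, \S2, Exercise~5. (This argument does not use $P_b\subset P$; compare the split case of the Lemma above, and Example~\ref{exIbig}, where $\mu_{\rm dom}\notin I_{b,\{\mu\},L}$ and uniqueness indeed fails.)

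For (ii): by (i) the right-hand side of Conjecture~\ref{HVconj} reduces to the single term ${\rm Ind}^{G(\BQ_p)}_{P(\BQ_p)}H^{\bullet}((L,[b]_L,\{\mu_{\rm dom}\}_L))[\rho]$, and one must produce this induction geometrically. The hypothesis $B\otimes_{\BQ_p}\breve\BQ_p\subset P_b\subset P\otimes_{\BQ_p}\breve\BQ_p$, together with the automatic $P$-dominance of $\mu_{\rm dom}$, makes $([b],\{\mu\})$ Hodge--Newton reducible with respect to $P$ in the sense of Definition~\ref{defhnregular}. Over the perfect field $\BF$ the framing object $\BX$ then splits, up to isogeny with additional structure, as $\BX_1\times\dots\times\BX_r$, with $\BX_i$ a framing object for the simple RZ datum attached to the $i$-th block $L_i$ of $L$; and the crucial geometric input is that the universal object over $\CM^{\rig}_{\CD_{\BZ_p}}$ carries a \emph{canonical} filtration $0=X^0\subset X^1\subset\dots\subset X^r=X$ by $p$-divisible subgroups with additional structure, compatible via the universal quasi-isogeny with the slope decomposition of $\BX$ --- this is the Hodge--Newton decomposition of Katz, extended to $p$-divisible groups in families, the Hodge--Newton condition being precisely what kills the obstruction to deforming the slope filtration of the special fibre. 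Passing to the graded pieces $\mathrm{gr}^{\bullet}X$ with the induced quasi-isogenies gives a point of the ``Levi RZ space'' $\CM_L=\CM_{L_1}\times\dots\times\CM_{L_r}$ of Examples~\ref{secexlsh}, (iii), while the position of the filtration, read off after a level structure trivialises the Tate module $\CT\simeq\Lambda$, is a point of the (pro-)discrete space $G(\BQ_p)/P(\BQ_p)$. This presents the tower $\big(\BM(G,[b],\{\mu\})^K\big)_K$, compatibly with the $J_b(\BQ_p)$-action and the Hecke correspondences, in terms of the tower for $(L,[b]_L,\{\mu_{\rm dom}\}_L)$ together with the discrete direction $G(\BQ_p)/P(\BQ_p)$; on $\ell$-adic cohomology with compact supports, the discrete direction contributes $C^{\infty}_c(G(\BQ_p)/P(\BQ_p))$ at the coefficient level and produces exactly the parabolic induction from $P(\BQ_p)$, the $W_{E_{\{\mu\}}}$-action being carried through since all morphisms are defined over $\breve E$ and compatible with the Weil descent data.

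The step I expect to be the main obstacle is making this cohomological comparison rigorous: one must know the geometric presentation above induces an isomorphism on $H^{\bullet}_c$ simultaneously for all levels $K$ (equivalently, in the limit), even though the $\BM^K$ are only smooth and partially proper, not quasi-compact. This is where Mantovan's technical work enters --- controlling the cohomology of the non-quasi-compact pieces (in the spirit of Proposition~\ref{cohoprop}), verifying that the Hodge--Newton filtration really trivialises the relevant torsor and that the $G(\BQ_p)$- and $J_b(\BQ_p)$-actions respect the decomposition, and descending from the structure of the formal schemes $\CM_{\CD_{\BZ_p}}$ (via de Jong's comparison of a formal scheme with its rigid generic fibre) to the rigid-analytic tower. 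The combinatorial part (i) and the existence of the Hodge--Newton filtration are comparatively formal; the heart of the matter is this equivariant identification of cohomology, functorial in $K$.
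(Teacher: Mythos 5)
Your proof of part (i) is correct, and it reaches the conclusion by a genuinely slightly different route than the paper. You exploit the \emph{slope inequality} component of the condition $[b]_L \in B(L,\{\mu_{\dom}\}_L)$ (resp.\ $\{\mu'\}_L$), namely that $\overline{\mu_{\dom}} - \nu_{[b]_L}$ and $\overline{\mu'} - \nu_{[b]_L}$ are non-negative sums of positive relative coroots of $L$, pairing against a $\sigma$-invariant weight $\chi$ that vanishes on $Q^\vee_L$. The paper instead exploits the \emph{neutrality} component: both $\mu_{\dom}$ and $\mu'_{L-\dom}$ have image $\kappa_L([b]_L)$ in $\pi_1(L)_\Gamma$, so their difference — which by dominance is a non-negative rational combination of images of coroots from the unipotent radical — dies in $\pi_1(L)_\Gamma$, and since $P$ and its unipotent radical are $\Gamma$-stable this positivity forces each coefficient to vanish. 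Both methods arrive at $\mu_{\dom} - \mu' \in Q^\vee_L$ (your $\chi$ argument is in effect one explicit way to run the positivity argument the paper leaves implicit), and both then invoke the bijectivity in \eqref{minuscrep} for $L$-dominant $L$-minuscule cocharacters. You are also right that the argument for (i) does not use $P_b \subset P$; the paper's version has the same feature. The only thing to be careful about in your write-up is that a priori $\mu_{\dom} - \mu'$ lies only in the \emph{rational} span of $Q^\vee_L$; one still needs the observation that $Q^\vee_G \cap (Q^\vee_L)_{\BQ} = Q^\vee_L$ (because the simple coroots of $L$ extend to a $\BZ$-basis of $Q^\vee_G$) to conclude it lies in $Q^\vee_L$ itself. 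This is easy, but it is worth a sentence.

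For part (ii) the paper offers no argument at all — it simply cites Mantovan, \cite{M2}, Cor.~1.5. Your paragraph is therefore not a substitute for the paper's (nonexistent) proof but a plausible reconstruction of Mantovan's strategy, and as such it is morally accurate: the Hodge--Newton condition $B \otimes \breve\BQ_p \subset P_b \subset P \otimes \breve\BQ_p$ does yield a canonical filtration of the universal $p$-divisible group whose graded pieces live over the Levi RZ space (Mantovan--Viehmann, Shen), the choice of filtration position accounts for $G(\BQ_p)/P(\BQ_p)$, and Mantovan's technical content is precisely the equivariant comparison of compactly supported cohomology in the limit over $K$ together with the Galois/Weil equivariance. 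Two caveats are worth flagging if you want to turn this into a proof: the presentation of the tower is not literally as a product with a discrete factor (Mantovan works through a zig-zag involving Igusa-type fibrations and spectral sequences, not a direct product decomposition), and the published form of \cite{M2} imposes an unnecessary extra hypothesis which was only later removed by Shen \cite{She}; the theorem in the form stated here relies on that later work.
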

\begin{proof}[Proof of Theorem \ref{thmmant}] (i) If $\{\mu'\}\in I^G_{b,\{\mu\},L}$ then $\mu'_{\dom}=\mu_{\dom}$. Hence $\mu'_{L-\dom}$ (the unique $L$-dominant representative of $\{\mu'\}_L$ in $X_*(T)$) is in the $W$-orbit of $\mu_{\dom}$. Thus $\mu_{\dom}-\mu'_{L-\dom}\in \pi_1(L)$ is a non-negative linear combination of the roots of $T$ in the unipotent radical of $P$. On the other hand, $\mu_{\dom}-\mu'_{L-\dom}\in \pi_1(L)$ maps to $0$ in $\pi_1(L)_{\Gamma}$,  as it is the difference of two elements mapping to $\kappa_L(b)$. Since $P$ and its unipotent radical are defined over $F$ this implies that the images of $\mu_{\dom}$ and $\mu'_{L-\dom}$ in $\pi_1(L)$ agree. As both $\mu_{\dom}$ and $\mu'_{L-\dom}$ are $L$-dominant and minuscule,  they thus coincide by (\ref{minuscrep}). 

(ii) This is proved in \cite{M2}, Corollary 1.5.
\end{proof}
\begin{remark}
The first assertion was first claimed (without proof) in \cite{H} in a more general context than that where it holds, compare the remark in Example \ref{exIbig}. It is implicitly used in Mantovan's proof of (ii).

In \cite{M2}, assertion (ii) is proved in those cases where locally the universal $p$-divisible group over the RZ space has a slope filtration related to the Levi subgroup $L$. The existence of this slope filtration is proved in \cite{mv}. To be more precise, in \cite{mv} an additional condition is imposed that is, however, not necessary. This additional unnecessary condition also appears in the final result in \cite{M2}. Theorem \ref{thmmant} and the required slope filtration theorem in the above formulation are first stated and proved by Shen \cite{She}. 
\end{remark}

In \cite{Shin}, 8.3, Shin sketches the proof of  some further special cases of the conjecture for $p$-divisible groups of low height, but where one no longer has a slope filtration. 

 For level structures with respect to $\CG(\BZ_p)$, Mantovan has a more general result using a study of the integral RZ spaces.
\begin{theorem}[Mantovan]\label{thmmantk}
Let $\mathcal{D}_{\BZ_p}$ be an unramified simple integral RZ-datum.  Let $([b],\{\mu\})$ be the associated neutral acceptable pair and $\CG$ the corresponding parahoric group scheme, and let $L$ and $b\in L(\breve\BQ_p)$ be as in Conjecture \ref{HVconj}. Then 
\begin{equation*}
\label{eqmantk}H^{\bullet} ((G, [b], {\{\mu\}})) [\rho]^{K_0} = {\rm Ind}^{G(\mathbb{Q}_p)}_{P(\mathbb{Q}_p)} \big(\sum_{{\{\mu'\}_L} \in I_{b, \{\mu\}, L}}
H^{\bullet} ((L, [b]_L, {\{\mu'\}_L}))[\rho]\big)^{K_0}
\end{equation*} 
where $K_0=\CG(\BZ_p)$.
\end{theorem}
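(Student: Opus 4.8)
The plan is to deduce Theorem \ref{thmmantk} from Theorem \ref{thmmant} (and the work of Mantovan on which it rests) by carefully tracking the level-$K_0$ cohomology through Mantovan's comparison between the integral RZ space and the Igusa/central-leaf picture. The starting point is that for an unramified simple integral RZ datum, the formal scheme $\CM_{\CD_{\BZ_p}}$ is formally smooth over $\CO$, so $\BM^{K_0}=\CM_{\CD_{\BZ_p}}^{\rig}$ is the generic fiber of a smooth formal scheme; its cohomology is computed by nearby cycles on $\CM_{\CD_{\BZ_p}}$, and one has access to the full strength of Mantovan's formalism at level $K_0$, i.e.\ before passing to infinite level. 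First I would recall Mantovan's decomposition of $\CM_{\rm red}$ (or rather of the relevant Newton stratum) into central leaves, and her product-type formula expressing the cohomology of the RZ space in terms of the cohomology of Igusa varieties and of Rapoport--Zink spaces for the group $L$ attached to the slope filtration of the universal $p$-divisible group. The hypothesis that $[b]\in L(\breve\BQ_p)$ with $B\otimes\breve\BQ_p\subset P_b\subset P\otimes\breve\BQ_p$ is exactly what is needed for the slope filtration theorem of \cite{mv} (in the corrected form of \cite{She}) to produce, locally on $\BM^{K_0}$, a filtration of the universal $p$-divisible group whose graded pieces are governed by the Levi $L$.

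The key steps, in order, are: (1) reduce to the case where $P$ is standard and $\nu_b$ is $G$-dominant, which is harmless after $\sigma$-conjugation; (2) invoke Theorem \ref{thmmant}(i) to identify $I_{b,\{\mu\},L}=\{\mu_{\rm dom}\}$ in the slope-filtration case, so the sum on the right-hand side has only one term and matches the isotypic decomposition coming from the graded pieces of the slope filtration; (3) apply Mantovan's comparison (the "Mantovan formula", \cite{M2}) at level $K_0$: the cohomology of $\BM^{K_0}$ decomposes as a sum over the $L$-data of (cohomology of an Igusa variety) $\otimes$ (cohomology of the $L$-RZ tower at level $K_0\cap L$), and the alternating sum over Igusa cohomology with a fixed $\rho$ contributes precisely the parabolically induced term $\mathrm{Ind}_{P(\BQ_p)}^{G(\BQ_p)}$; (4) match the $W_{E_{\{\mu\}}}$-action on both sides, using that the Weil-group action on the Igusa cohomology is unramified in the appropriate sense so that the Galois action is entirely carried by the $L$-factor, as in Properties \ref{properiod} and the discussion in section \ref{elladiccoho}; (5) take $K_0$-invariants throughout—since $K_0=\CG(\BZ_p)$ is the stabilizer of the lattice chain, $\BM^{K_0}$ is literally $\CM_{\CD_{\BZ_p}}^{\rig}$, and the $L$-side level is $\CG_L(\BZ_p)$, so no delicate limiting argument over deeper level is required.

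The main obstacle, I expect, is step (3)–(4): making Mantovan's product formula work \emph{with the $W_E$-action intact and at finite level $K_0$}, rather than merely as an identity in a Grothendieck group of $G(\BQ_p)$-modules after passing to infinite level. One delicate point is that the slope filtration theorem of \cite{mv} was originally proved under an extra hypothesis (noted in the final remark of the excerpt) which is unnecessary; one must use instead the version in \cite{She} covering all $L$ and $b$ as in Conjecture \ref{HVconj}, not only those arising from HN-reducible pairs. A second subtlety is the bookkeeping of parabolic induction versus the geometry of the Igusa variety: one must check that the $J_b(\BQ_p)$-orbit structure on the set of leaves, combined with the parahoric level $K_0$, yields exactly $\mathrm{Ind}_{P(\BQ_p)}^{G(\BQ_p)}(-)^{K_0}$ and not some twist or proper subinduction—this is where the assumption \eqref{condstar} that $J_b$ is an inner form of a Levi inside $L$, and the resulting uniqueness of $P$, is used crucially. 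Finally one should remark, as in footnote \ref{footcoho}, that we are asserting the equality for the naive compactly-supported $\ell$-adic cohomology of section \ref{elladiccoho}; here this is legitimate because at level $K_0$ the space is the generic fiber of a formally smooth formal scheme of finite type, so Huber's finiteness and comparison theorems apply without the pathologies that can occur at infinite level. $\square$
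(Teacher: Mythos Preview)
Your proposal conflates the hypotheses of Theorem~\ref{thmmant} and Theorem~\ref{thmmantk}, and this is a genuine gap. Theorem~\ref{thmmantk} assumes only condition~\eqref{condstar} on $(L,b)$ as in Conjecture~\ref{HVconj}; it does \emph{not} assume $B\otimes\breve\BQ_p\subset P_b\subset P\otimes\breve\BQ_p$ together with $[b]_L\in B(L,\{\mu_{\dom}\}_L)$. Under the weaker hypothesis the set $I_{b,\{\mu\},L}$ can have several elements---Example~\ref{exIbig} is precisely such a case. Your step~(2), which invokes Theorem~\ref{thmmant}(i) to collapse the sum on the right-hand side to a single term, is therefore illegitimate in the stated generality, and the slope-filtration mechanism you describe (via \cite{mv,She}) is only available under the stronger hypothesis. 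In effect your sketch reproves (a level-$K_0$ shadow of) Theorem~\ref{thmmant}(ii), not Theorem~\ref{thmmantk}.

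The paper's argument is of a different nature. It is a direct citation of \cite{M2}, Theorem~9.3: Mantovan's proof there works with the integral RZ space at level $K_0$ and does not pass through the slope-filtration reduction; the decomposition she obtains on the $L$-side naturally produces a sum indexed by $I_{b,\{\mu\},L}$. The only correction needed is that Mantovan, following Harris's erroneous Proposition (cf.\ the remark in Example~\ref{exIbig}), asserted $|I_{b,\{\mu\},L}|=1$ and so stated her result with a single term; once one drops that claim and retains the sum, her argument goes through unchanged. So the point of Theorem~\ref{thmmantk} is exactly the multi-term phenomenon that your step~(2) suppresses.
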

\begin{remark}
By definition of the action of $G(\mathbb{Q}_p)$ on the tower and its cohomology, we can rewrite the left hand side of the identity in Theorem \ref{thmmantk} as 
$$H^{\bullet} ((G, [b], {\{\mu\}})) [\rho]^{K_0} =\sum_{i,j\geq 0}(-1)^{i+j}H^{i,j}(\mathbb{M}(G,[b],\{\mu\})^{K_0})[\rho].$$
Similarly, using that $P(\mathbb{Q}_p)K_0=G(\mathbb{Q}_p)$, the right hand side of the identity is equal to
$$\sum_{{\{\mu'\}_L} \in I_{b, \{\mu\}, L}}\sum_{i,j\geq 0}(-1)^{i+j}H^{i,j}(\mathbb{M}(L,[b]_L,\{\mu'\}_L)^{L\cap K_0}))[\rho].$$
\end{remark}

\begin{proof}[Proof of Theorem \ref{thmmantk}]
This is \cite{M2}, Theorem 9.3, except for the fact that Mantovan uses Harris' incorrect proposition that $|I^G_{b,\{\mu\},L}|=1$. Making the necessary modifications to correct this, her proof shows the above assertion.
\end{proof}

It might be easier to prove the consequence Corollary \ref{corhv} of the HV conjecture, rather than the conjecture itself. Perhaps the case where $G=\GL_n$ is doable. This would also constitute some justification for the correct choice of the cohomology theory in this context, cf. the remarks at the beginning of this section.


\bigskip
\obeylines
Mathematisches Institut der Universit\"at Bonn  
Endenicher Allee 60 
53115 Bonn, Germany.
email: rapoport@math.uni-bonn.de

\bigskip
\obeylines
Fakult\"at f\"ur Mathematik der Technischen Universit\"at M\"unchen -- M11
Boltzmannstr. 3 
85748 Garching, Germany
email: viehmann@ma.tum.de

\end{document}